\newtheorem{thm}{Theorem}[section]
\newtheorem{prop}[thm]{Proposition}
\newtheorem{lem}[thm]{Lemma}
\theoremstyle{definition}
\newtheorem{defn}[thm]{Definition}
\theoremstyle{remark}
\newtheorem{exam}{Example}
\newtheorem{claim}{Claim}
\newtheorem*{claim*}{Claim}
\newtheorem*{rmk}{Remark}
\newcommand{\barre}[1]{\overline{#1}}
\newcommand{\p}{\partial}
\newcommand{\R}{\mathbb{R}}
\newcommand{\T}{\mathbb{T}}
\newcommand{\Z}{\mathbb{Z}}
\newcommand{\boC}{\mathcal{C}}
\newcommand{\boN}{\mathcal{N}}
\newcommand{\boF}{\mathcal{F}}
\newcommand{\boV}{\mathcal{V}}
\newcommand{\boO}{\mathcal{O}}
\newcommand{\boG}{\mathcal{G}}
\newcommand{\boY}{\mathcal{Y}}
\newcommand{\boX}{\mathcal{X}}
\newcommand{\boZ}{\mathcal{Z}}
\newcommand{\boT}{\mathcal{T}}
\newcommand{\boS}{\mathcal{S}}
\newcommand{\gbf}{\mathbf{g}}
\newcommand{\kbf}{\mathbf{k}}
\newcommand{\tbf}{\mathbf{t}}
\newcommand{\vbf}{\mathbf{v}}
\renewcommand{\S}{\mathbb{S}}
\DeclareMathOperator{\Span}{span}
\DeclareMathOperator{\tr}{tr}
\DeclareMathOperator{\id}{id}
\DeclareMathOperator{\out}{out}
\DeclareMathOperator{\tann}{tan}
\DeclareMathOperator{\diam}{Diam}
\DeclareMathOperator{\length}{Length}
\renewcommand{\tan}{\tann}
\title[Rigidity of min-max minimal disks in $3$-balls with non-negative Ricci 
curvature]{Rigidity 
of min-max minimal disks in $3$-balls with non-negative Ricci curvature}
\author{Laurent Mazet}
\address{Institut Denis Poisson, CNRS UMR 7013, Universit\'e de Tours, 
Universit\'e d'Orl\'eans, Parc de Grandmont, 37200 Tours, France}
\email{laurent.mazet@univ-tours.fr}
\author{Abra\~ao Mendes}
\address{Instituto de Matem\'atica, Universidade Federal de Alagoas, Macei\'o, AL, Brazil}
\email{abraao.mendes@im.ufal.br}
\thanks{L.M. was partially supported by the ANR-19-CE40-0014 grant. A.M. was partially supported by the National Council for Scientific and Technological Development – CNPq, Brazil (Grant 305710/2020-6) and the Funda\c{c}\~ao de Amparo \`a Pesquisa do Estado de Alagoas – FAPEAL, Brazil (Process E:60030.0000002254/2022). The authors were partially supported by the Coordena\c{c}\~ao de Aperfei\c{c}oamento de Pessoal de N\'ivel Superior – Brasil (CAPES-COFECUB 88887.143161/2017-0).}
\numberwithin{equation}{section}
\DeclareMathOperator{\Ric}{Ric}
\DeclareMathOperator{\Div}{div}
\newcommand{\boH}{\mathcal{H}}
\newcommand{\Ome}{\Omega}
\newcommand{\eps}{\varepsilon}
\newcommand{\ii}{\mathrm{II}}
\newcommand{\Diff}{\mathrm{Diff}}
\subjclass[2020]{53A10, 53C24, 53C42}
\begin{document}

\raggedbottom

\linespread{1.25}

\begin{abstract}
In this paper we prove a rigidity statement for free boundary minimal surfaces 
produced via min-max methods. More precisely, we prove that for any Riemannian 
metric $g$ on the 3-ball $B$ with non-negative Ricci curvature and $\ii_{\p 
B}\ge g_{|\p B}$, there exists a free boundary 
minimal disk $\Delta$ of least area among all free boundary minimal disks in 
$(B,g)$. Moreover, the area of any such $\Delta$ equals to the width of 
$(B,g)$, $\Delta$ has index one, and the length of $\p\Delta$ is bounded from 
above by $2\pi$. Furthermore, the length of $\p\Delta$ equals to $2\pi$ if and 
only if $(B,g)$ is isometric to the Euclidean unit ball. This is related to a 
rigidity result obtained by F.C.~Marques and A.~Neves in the closed case. The 
proof uses a rigidity statement concerning half-balls with non-negative Ricci curvature 
which is true in any dimension. 
\end{abstract}

\maketitle

\section{Introduction}

In Riemannian geometry, a classical question consists in controlling the size 
of 
a Riemannian manifold in terms of its curvature tensor. For example, 
Bonnet-Myers theorem gives an upper bound on the diameter of a Riemannian 
manifold under a lower bound on its Ricci curvature. The proof of this result 
is based on the interplay between the Ricci curvature and the minimization 
property of some geodesics.

Other notions for the size of a manifold can be considered. For example, for a 
nontrivial homology class, one could think of the minimal volume of 
elements 
in this class. For closed curves this leads to the notion of systole and gives 
estimate for the injectivity radius, see for example Klingenberg's estimate or 
Toponogov theorem.

For submanifolds of dimension at least $2$, stable minimal submanifolds appear
as realizations of the minimum of the volume in the homology class. Hence 
estimating the volume of such stable, or even area-minimizing, minimal 
submanifolds is a natural question. For example, H.~Bray, S.~Brendle 
and 
A.~Neves~\cite{BrBrNe} have 
proved that an area-minimizing $2$-sphere in a $3$-manifold whose scalar 
curvature is at least $2$ has area at most $4\pi$. Moreover, in case of 
equality, the universal cover of the ambient manifold is isometric to the 
standard cylinder 
$\S^2\times\R$.

Another concept to measure the size of a Riemannian manifold is a min-max 
quantity $W$ called the 
width introduced by F.~Almgren to produce minimal hypersurfaces when the 
topology 
of the ambient manifold does not allow the minimization approach (see also 
M.~Gromov~\cite{Gro2}). In this situation, the minimal hypersurfaces that 
appear 
are in 
general neither area-minimizing nor stable. For example, in the case of a 
Riemannian $3$-sphere, L.~Simon and F.~Smith~\cite{Smi} defined a notion of 
width that allowed them to produce a minimal $2$-sphere. In a seminal work, 
F.C.~Marques and A.~Neves~\cite{MaNe3} initiated the study 
of the Morse index of minimal hypersurfaces produced by min-max methods. Among 
other things, they proved the following rigidity result:

\begin{thm}[{\cite[Theorem~4.9]{MaNe3}}]\label{th:MaNe}
Let $g$ be a Riemannian metric on the 3-sphere $\S^3$, with scalar curvature 
$R\ge6$, such that there are no stable embedded minimal spheres in $(\S^3,g)$ 
(in 
particular, if $g$ has positive Ricci curvature). Then there is an 
embedded 
minimal sphere $\Sigma$ in $(\S^3,g)$ such that 
\[
|\Sigma|=\inf\{|S|;S\text{ is an embedded minimal sphere in }(\S^3,g)\}.
\]
Moreover, any such $\Sigma$ satisfies the following conditions:
\begin{itemize}
\item $|\Sigma|=W(\S^3,g)$ (the Simon-Smith width);
\item $\Sigma$ has index one;
\item $|\Sigma|\le4\pi$.
\end{itemize}
Besides, the equality $|\Sigma|=4\pi$ holds if and only if $(\S^3,g)$ is 
isometric to the Euclidean unit sphere.
\end{thm}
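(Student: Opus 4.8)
The plan is to combine Simon--Smith min-max theory for $2$-spheres with the Marques--Neves index estimate, a Hersch-type balancing argument for the second variation, and a rigidity analysis in the equality case. The preliminary ingredient is the following: \emph{if $S\subset(\S^3,g)$ is an unstable embedded minimal sphere, then $W(\S^3,g)\le|S|$, with strict inequality when $\Ind(S)\ge2$.} Indeed, any embedded $2$-sphere in $\S^3$ bounds a ball on each side (Schoenflies). Letting $\phi_1>0$ realize the bottom of the spectrum of the second variation form $Q$ (the second variation of area) of $S$, instability gives $Q(\phi_1,\phi_1)<0$, so the deformations of $S$ by $\pm t\phi_1\nu$ strictly decrease area for small $t\ne0$; one then continues these two families into the two balls bounded by $S$ and sweeps each down to a point keeping area $\le|S|$, using that each ball contains no stable minimal sphere (it would be a stable embedded minimal sphere in $\S^3$), so the sweepout cannot get stuck on a minimal surface and can be driven to a point by a minimization/min-max argument relative to the boundary. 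Concatenating yields an admissible sweepout of $\S^3$ of maximal area $\le|S|$; when $\Ind(S)\ge2$, the extra negative directions of $Q$ let one instead build a sweepout avoiding $S$ of maximal area $<|S|$.

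With this in hand, $W:=W(\S^3,g)>0$ and, by Simon--Smith min-max together with the Colding--De Lellis regularity theory, the genus bound, and the Marques--Neves index estimate, $W$ is attained by a finite disjoint union of embedded minimal spheres $\Sigma_1,\dots,\Sigma_N$ with multiplicities $m_i\ge1$, $\sum_i m_i|\Sigma_i|=W$ and $\sum_i\Ind(\Sigma_i)\le1$. Since $(\S^3,g)$ has no stable embedded minimal sphere, $N=1$ and $\Sigma_1$ has index $1$; being unstable it satisfies $W\le|\Sigma_1|$ by the preliminary ingredient, which forces $m_1=1$ and $|\Sigma_1|=W$. Hence $\inf\{|S|:S\text{ embedded minimal sphere}\}\le|\Sigma_1|=W$, while $W\le|S|$ for every such $S$, so the infimum equals $W$ and is attained. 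Finally, any embedded minimal sphere $\Sigma$ with $|\Sigma|=W$ has index exactly $1$: it is unstable, and $\Ind(\Sigma)\ge2$ would give $W<|\Sigma|$, a contradiction.

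Fix such a $\Sigma$ and let $\phi_1>0$ be the bottom eigenfunction of its second variation form $Q$. As $\Sigma$ is a topological sphere, choose a conformal diffeomorphism $\Sigma\to\S^2\subset\R^3$ and, by Hersch's balancing lemma applied to the measure $\phi_1\,dA$, post-compose with a M\"obius transformation of $\S^2$ to obtain a conformal diffeomorphism $X=(X^1,X^2,X^3)\colon\Sigma\to\S^2$ with $\int_\Sigma X^i\phi_1\,dA=0$ for each $i$. Then $\sum_i(X^i)^2\equiv1$, and by conformal invariance of the Dirichlet energy in dimension two, $\int_\Sigma\sum_i|\nabla X^i|^2\,dA$ equals the energy of $\id_{\S^2}$, namely $8\pi$. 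Since $\Ind(\Sigma)=1$ and each $X^i$ is $L^2(dA)$-orthogonal to $\phi_1$, one has $Q(X^i,X^i)\ge0$; summing over $i$ and using the Gauss equation $|A|^2+\Ric(\nu,\nu)=\tfrac12 R+\tfrac12|A|^2-K_\Sigma$, the hypothesis $R\ge6$, the Gauss--Bonnet identity $\int_\Sigma K_\Sigma\,dA=4\pi$, and $|A|^2\ge0$,
\[
0\le\sum_i Q(X^i,X^i)=8\pi-\int_\Sigma\Big(\tfrac12 R+\tfrac12|A|^2-K_\Sigma\Big)\,dA\le 8\pi-3|\Sigma|+\int_\Sigma K_\Sigma\,dA=12\pi-3|\Sigma|,
\]
so $|\Sigma|\le4\pi$.

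If $|\Sigma|=4\pi$, every inequality above is an equality, so $A\equiv0$ (hence $\Sigma$ is totally geodesic), $R\equiv6$ along $\Sigma$, and $Q(X^i,X^i)=0$ for each $i$; consequently each $X^i$ is a Jacobi field, $\Delta_\Sigma X^i+\Ric(\nu,\nu)X^i=0$, with $\Ric(\nu,\nu)=3-K_\Sigma$ along $\Sigma$ by the Gauss equation. The main obstacle is to upgrade this infinitesimal rigidity along $\Sigma$ to global rigidity of $(\S^3,g)$: I would work in Fermi coordinates $g=dt^2+g_t$ around $\Sigma$ and use the Riccati equation for the shape operators of the level sets $\Sigma_t$, together with $R\ge6$ and the Jacobi-field data above, to show that a one-sided neighborhood of $\Sigma$ is isometric to a neighborhood of a great sphere in the round $\S^3$; then I would propagate this, the set of points around which $g$ has constant sectional curvature $1$ being open by construction and closed because any frontier component of it is again a totally geodesic sphere with $R=6$ on it, to which the same local analysis applies, and conclude that $(\S^3,g)$ is the Euclidean unit sphere. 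The converse is classical: in the round $\S^3$ the only embedded minimal spheres are the great spheres, which have index $1$, area $4\pi$, and realize the width. Turning the first- and second-order rigidity along $\Sigma$ into this global statement is where I expect the real work to lie.
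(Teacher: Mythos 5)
First, a contextual remark: the paper does not prove this statement at all --- Theorem~\ref{th:MaNe} is quoted verbatim from Marques--Neves \cite[Theorem~4.9]{MaNe3} as background, so there is no internal proof to compare against. Measured against the actual Marques--Neves argument, your outline of the first three bullet points is essentially the right one and in the right order: the ``preliminary ingredient'' (an unstable embedded minimal sphere $S$ admits a sweepout with maximal area $\le|S|$, strictly less if $\Ind(S)\ge2$) is exactly their key deformation lemma, and it correctly isolates where the no-stable-spheres hypothesis enters (pushing the two half-sweepouts down to points inside the balls bounded by $S$); the identification $W=\inf|S|$, the multiplicity-one conclusion, and the index-one statement then follow by the squeeze you describe. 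The Hersch balancing argument with the measure $\phi_1\,dA$, the conformal invariance of the Dirichlet energy giving $8\pi$, the Gauss equation, and Gauss--Bonnet yielding $0\le 12\pi-3|\Sigma|$ is also the correct computation and matches \cite{MaNe3}. (You do gloss over real technical content --- that the concatenated sweepout lies in the saturated family defining the Simon--Smith width, the regularity/genus control needed to know the min-max limit is a union of embedded spheres, and the Meeks--Simon--Yau-type minimization needed to drive the sweepouts to points --- but these are reasonable to cite.)

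The genuine gap is the equality case $|\Sigma|=4\pi\Rightarrow(\S^3,g)$ round, which is one of the four assertions of the theorem and which you explicitly leave as a plan. What you extract from equality --- $A\equiv0$, $R=6$ on $\Sigma$, and that the balanced coordinate functions $X^i$ are Jacobi fields with $\Ric(\nu,\nu)=3-K_\Sigma$ --- is only infinitesimal data \emph{along} $\Sigma$, and the proposed ``Fermi coordinates plus Riccati'' propagation does not work as stated: $R\ge6$ is a scalar curvature bound, which does not control the full Riccati equation for the shape operators of the level sets $\Sigma_t$, and a totally geodesic leaf with nontrivial Jacobi fields does not by itself generate a foliation by minimal or totally geodesic surfaces. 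The missing idea in \cite{MaNe3} (and in the analogous Bray--Brendle--Neves rigidity) is to use the \emph{least-area} property of $\Sigma$ globally: one constructs, via the implicit function theorem applied to the mean curvature operator modulo constants, a local foliation of a neighborhood of $\Sigma$ by constant mean curvature spheres $\Sigma_t$, and then combines the stability-type inequality, $R\ge6$, and the area-minimality of $\Sigma$ among minimal spheres to force each leaf to be minimal, totally geodesic, of area $4\pi$, and round; only then does the metric split as $dt^2+g_t$ with $g_t$ round, and an open-closed argument finishes. Without this (or an equivalent mechanism such as the Reilly-type argument the present paper uses for the half-ball in Theorem~\ref{th:rigidhalfball}), the rigidity statement is not established, so the proposal as written proves only the first three bullets.
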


When the sectional curvature of $(\S^3,g)$ is at most $1$, the first 
author~\cite{Maz29} proved that 
we have the 
reverse estimate: the width is at least $4\pi$ and the rigidity statement is 
also true in the equality case.

In the case of compact Riemannian manifold with boundary, the min-max approach 
has also been used to produce free boundary minimal hypersurfaces: minimal 
hypersurfaces that meet the boundary orthogonally (see \cite{CaFrSc,Ket2}). In 
this context, one could 
also expect some estimates of the width under some curvature assumptions. In 
this paper, we consider the case of a Riemannian $3$-ball $(B,g)$. The 
Simon-Smith approach can be used to produce either a free boundary 
minimal disk or a minimal sphere under a convexity assumption on the boundary 
(see M.~Gr\"{u}ter and J.~Jost~\cite{GrJo1,Jos2} and precise definitions 
below). 
Besides, if the Ricci curvature is 
non-negative, the 
minimal sphere can be excluded. Under these assumptions, the minimal disk 
should have index one and computation does not lead to an estimate of the area 
of the disk but of its perimeter. Actually, the second 
author~\cite[Theorem~1.3]{Men} 
proved
\begin{thm}\label{th:Men}
Let $(M^3,g)$ be a compact orientable Riemannian $3$-manifold with boundary 
with non-negative Ricci curvature and such that $\ii_{\partial M}\ge 
g_{|\partial 
M}$. If $\Sigma$ is an orientable free boundary minimal surface in $M$ of index 
one, then its perimeter satisfies
\[
L(\partial\Sigma)\le 2\pi(\gbf+\kbf),
\]
where $\gbf$ is the genus of $\Sigma$ and $\kbf$ the number of connected 
components 
of $\partial\Sigma$. Moreover, in case of equality,
\begin{enumerate}
\item $\Sigma$ with its induced metric is isometric to the Euclidean unit disk;
\item $\partial\Sigma$ is a geodesic of $\partial M$;
\item $\Sigma$ is totally geodesic;
\item all sectional curvatures of $M$ vanish on $\Sigma$.
\end{enumerate}
\end{thm}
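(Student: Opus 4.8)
The strategy is to combine the index‑one hypothesis with a Hersch‑type balancing argument applied to a proper holomorphic map of $\Sigma$ onto the unit disk, and then to read off the rigidity from the resulting chain of inequalities by means of the Gauss equation and the Gauss--Bonnet theorem. We may assume $\p\Sigma\neq\emptyset$, i.e. $\kbf\ge1$, the estimate being trivial otherwise. Let $N$ be a unit normal of $\Sigma$ in $M$; since $\Sigma$ is free boundary, $N$ is tangent to $\p M$ along $\p\Sigma$, while the outward conormal $\nu$ of $\p\Sigma$ in $\Sigma$ is normal to $\p M$. The second variation of the free boundary area at $\Sigma$ is
\[
Q(\varphi,\varphi)=\int_\Sigma\bigl(|\nabla\varphi|^2-(\Ric_M(N,N)+|\ii_\Sigma|^2)\varphi^2\bigr)\,dA-\int_{\p\Sigma}\ii_{\p M}(N,N)\,\varphi^2\,ds ,
\]
and, $\Sigma$ having index one, its first Jacobi eigenfunction $w>0$ is the only negative direction, so $Q(\varphi,\varphi)\ge0$ for every $\varphi$ with $\int_\Sigma\varphi\,w\,dA=0$. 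By Gabard's theorem, the bordered Riemann surface $\Sigma$ (of genus $\gbf$ with $\kbf$ boundary curves) carries a proper holomorphic map $f=(f_1,f_2)\colon\Sigma\to\mathbb D$ with $f^{-1}(\p\mathbb D)=\p\Sigma$ and $\deg f\le\gbf+\kbf$; composing $f$ with a suitable conformal automorphism of $\mathbb D$ — which changes neither the properness nor the degree — the classical Hersch balancing trick lets me assume, in addition, that $\int_\Sigma f_i\,w\,dA=0$ for $i=1,2$.

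I then feed $f_1,f_2$ into $Q$. The index‑one inequality gives $0\le Q(f_1,f_1)+Q(f_2,f_2)$. Dropping the nonnegative terms $\Ric_M(N,N)$ and $|\ii_\Sigma|^2$; using $\ii_{\p M}(N,N)\ge g(N,N)=1$ along $\p\Sigma$ together with $f_1^2+f_2^2\equiv1$ there; and using that, $f$ being conformal, $\int_\Sigma(|\nabla f_1|^2+|\nabla f_2|^2)\,dA$ equals twice the area of $\mathbb D$ counted with multiplicity, i.e. $2\pi\deg f$, I obtain
\[
0\le 2\pi\deg f-L(\p\Sigma)\le 2\pi(\gbf+\kbf)-L(\p\Sigma),
\]
which is the desired bound.

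For the rigidity, equality forces every step above to be an equality: $\deg f=\gbf+\kbf$, then $\Ric_M(N,N)\equiv0$ and $|\ii_\Sigma|^2\equiv0$ on $\Sigma$ (the latter being conclusion~(3)), and $\ii_{\p M}(N,N)\equiv1$ on $\p\Sigma$. Feeding this back into the geometry: by the Gauss equation and $\Ric_M\ge0$, in dimension three one gets $K_\Sigma=K_M(T\Sigma)\ge0$; and the free boundary condition together with $\ii_{\p M}\ge g_{|\p M}$ gives that $\p\Sigma$ has geodesic curvature $k_g=\ii_{\p M}(T,T)\ge1$ in $\Sigma$, where $T$ is the unit tangent. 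Gauss--Bonnet, combined with $\int_{\p\Sigma}k_g\ge L(\p\Sigma)=2\pi(\gbf+\kbf)$, then yields $0\le\int_\Sigma K_\Sigma=2\pi\chi(\Sigma)-\int_{\p\Sigma}k_g\le2\pi(2-3\gbf-2\kbf)$, which since $\kbf\ge1$ forces $\gbf=0$, $\kbf=1$ and turns all these inequalities into equalities. Hence $K_\Sigma\equiv0$, $k_g\equiv1$ and $L(\p\Sigma)=2\pi$, so $\Sigma$ is flat and simply connected with boundary of geodesic curvature one and length $2\pi$, hence isometric to the flat unit disk — conclusion~(1). Since $\Sigma$ is totally geodesic with $k_g\equiv1$, the curvature vector of $\p\Sigma$ in $M$ is a unit multiple of $\nu$, hence normal to $\p M$, so $\p\Sigma$ is a geodesic of $\p M$ — conclusion~(2). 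Finally, a short computation combining the Gauss equation, $\Ric_M\ge0$, $\Ric_M(N,N)=0$ and $K_\Sigma=0$ shows that the scalar curvature of $M$ vanishes on $\Sigma$, so $\Ric_M=0$ on $\Sigma$ (being nonnegative with zero trace), and since in dimension three the curvature tensor is determined by the Ricci tensor, all sectional curvatures of $M$ vanish on $\Sigma$ — conclusion~(4).

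The step I expect to be the crux is the complex‑analytic input: Gabard's theorem, producing a proper holomorphic map with the sharp degree bound $\gbf+\kbf$, together with enough regularity up to $\p\Sigma$ for it to be admissible as a test function. Granting that, the balancing and the main inequality are routine, and the rigidity amounts to carefully tracking the equality cases in the Gauss equation and in Gauss--Bonnet.
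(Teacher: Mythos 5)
This theorem is not proved in the paper at all: it is quoted verbatim from the second author's earlier work \cite{Men} (Theorem~1.3 there), so there is no internal proof to compare against. Your argument --- testing the index form against the coordinates of an Ahlfors--Gabard proper conformal map $\Sigma\to\mathbb{D}$ of degree at most $\gbf+\kbf$, balanced \`a la Hersch against the first eigenfunction, and then extracting rigidity from the equality cases in the Gauss equation and Gauss--Bonnet --- is correct and is essentially the argument of that reference. The only points worth spelling out are routine: the balanced map is nonconstant, so $|f|^2$ vanishes only on a finite set and the pointwise conclusions $\|A\|^2\equiv0$, $\Ric(N,N)\equiv0$ follow by continuity; and the final identification of a flat disk with $k_g\equiv1$ and $L(\partial\Sigma)=2\pi$ with the Euclidean unit disk is via the developing map.
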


Moreover, he was able to prove that, in the equality case and under some extra 
hypothesis, $(M,g)$ is isometric to the Euclidean unit $3$-ball. Let us remark 
that under the curvature assumptions on the Ricci tensor and $\ii_{\partial 
M}$, $(M^3,g)$ in the above theorem is diffeomorphic to the $3$-ball (see the 
work of A.~Fraser and M.M.-C.~Li~\cite[Theorem~2.11]{FrLi}).
One of the main results of the paper should be compared with 
Theorem~\ref{th:MaNe}; it makes the link between the min-max 
construction of free boundary minimal disks and Theorem~\ref{th:Men}. Moreover 
we 
are able to obtain the rigidity statement without any extra hypothesis.

\begin{thm}\label{th:main}
Let $g$ be a Riemannian metric on the 3-ball $B$ with non-negative Ricci curvature and such that $\ii_{\partial B}\ge g_{|\partial B}$, where $\ii_{\partial B}$ is the second fundamental form of $\partial B$ in $(B,g)$ with respect to the inward unit normal. Then there is a free boundary minimal disk $\Delta$ in $(B,g)$ such that
\[
|\Delta|=\inf\{|D|;D\text{ is a free boundary minimal disk in }(B,g)\}.
\]
Moreover, any such $\Delta$ satisfies the following conditions:
\begin{itemize}
\item $|\Delta|=W(B,g)$;
\item $\Delta$ has index one;
\item $L(\partial\Delta)\le 2\pi$.
\end{itemize}
Besides, the equality $L(\partial\Delta)=2\pi$ holds if and only if $(B,g)$ is isometric to the Euclidean unit ball.
\end{thm}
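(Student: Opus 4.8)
The plan is to combine a min-max construction of a free boundary minimal disk with the perimeter estimate of Theorem~\ref{th:Men} and a separate rigidity statement for half-balls (alluded to in the abstract).

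\textbf{Step 1: The min-max construction.} First I would set up the Simon--Smith--Gr\"uter--Jost min-max scheme for sweepouts of $(B,g)$ by disks (with free boundary on $\partial B$), producing a min-max value $W(B,g)>0$ that is realized by a smooth, embedded free boundary minimal surface $\Sigma$, counted with multiplicity. Because $\Ric_g\ge 0$, a second-variation/Frankel-type argument shows there is no stable embedded minimal sphere (a stable two-sphere would violate non-negativity of Ricci via the Gauss--Bonnet argument of Schoen--Yau/Fischer-Colbrie--Schoen, since $\int_S(\Ric(\nu,\nu)+|\ii|^2)\le\int_S K = 4\pi$ forces a contradiction with the width being a genuine min-max level), so the min-max surface is a disk, and the multiplicity-one and index-one conclusions follow from the now-standard index bounds for one-parameter min-max (as in~\cite{MaNe3} adapted to the free boundary setting via~\cite{CaFrSc,Ket2}). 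This gives a free boundary minimal disk $\Sigma$ with $|\Sigma|=W(B,g)$ and index one.

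\textbf{Step 2: From index one to the least-area minimizer and the perimeter bound.} For any free boundary minimal disk $D$ in $(B,g)$, a sweepout argument (sliding $D$ off to one side, as $D$ separates $B$) shows $|D|\ge W(B,g)$; hence $|\Sigma|=W(B,g)$ is the infimum and is attained, and moreover any disk $\Delta$ attaining the infimum must itself be a min-max surface, hence has index one. Now Theorem~\ref{th:Men} applied to $\Delta$ (genus $0$, one boundary component, so $\gbf+\kbf=1$) gives $L(\partial\Delta)\le 2\pi$ immediately, and also lists the rigidity consequences in the equality case: $\Delta$ is a flat Euclidean unit disk, totally geodesic, $\partial\Delta$ is a geodesic of $\partial B$, and all sectional curvatures of $g$ vanish along $\Delta$.

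\textbf{Step 3: The global rigidity when $L(\partial\Delta)=2\pi$.} This is the heart of the matter and where the ``half-ball'' rigidity result is used. When equality holds, $\Delta$ divides $B$ into two halves $B^+$ and $B^-$, each of which is a Riemannian half-ball with non-negative Ricci curvature, with $\ii_{\partial B}\ge g_{|\partial B}$ on the spherical part, and with totally geodesic flat boundary $\Delta$ along which the metric is infinitesimally Euclidean. I would then invoke the dimension-free rigidity theorem for such half-balls to conclude each half is isometric to a Euclidean half-ball, and glue to get $(B,g)$ isometric to the Euclidean unit ball; conversely the Euclidean ball obviously realizes $L(\partial\Delta)=2\pi$ with the equatorial disk. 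The main obstacle is precisely establishing (or correctly invoking) this half-ball rigidity: one wants to run a minimization or foliation argument starting from the totally geodesic flat disk $\Delta$ and propagate the flatness through $B^\pm$, controlling how the free boundary condition on $\partial B$ interacts with the foliation — this is delicate because $\partial B$ is only weakly convex and the standard Reilly-formula/monotonicity arguments must be arranged so that the rigidity case of each intermediate inequality forces the metric to split as a product and then collapse to the Euclidean model. I expect the proof to proceed by constructing a one-parameter family of free boundary minimal (or constant-geodesic-curvature) disks issuing from $\Delta$, showing each is again area-$\pi$ and flat by the equality case of the perimeter estimate, and deducing a parallel structure that integrates to the Euclidean metric.
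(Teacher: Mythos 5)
Your proposal has the same overall architecture as the paper (min-max existence, least-area disk equals the width, index one, perimeter bound from Theorem~\ref{th:Men}, and rigidity via the half-ball theorem), but the two steps you treat as routine are precisely where the real work lies, and as written they have genuine gaps. First, in Step 1, excluding \emph{stable} minimal spheres is not the relevant point: to conclude the min-max surface is a disk one must exclude \emph{all} closed components, which the paper does by combining a topological control theorem (an adaptation of De~Lellis--Pellandini to the free boundary setting, giving that the components are disks, spheres or projective planes) with the Fraser--Li fact that a manifold with $\Ric\ge0$ and strictly convex boundary contains no closed minimal surface. Moreover, "multiplicity-one and index-one follow from standard index bounds" is not something you can simply cite in the free boundary Simon--Smith setting; the paper never invokes such bounds. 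Second, and most importantly, the assertion in Step 2 that "sliding $D$ off to one side" shows $|D|\ge W(B,g)$ hides the key mechanism: the nice foliation (built from the first eigenfunction of the Jacobi operator, using instability of $D$) only sweeps a small collar of $D$ with strictly decreasing area, and one must then sweep out each component of $B\setminus D$ entirely by surfaces of area $<|D|$. The slide could a priori get stuck at another minimal surface. The paper rules this out by running a \emph{constrained} min-max on each side: if the constrained width exceeded the area of the leaf $S_{\pm\eps}$, it would produce a free boundary minimal surface disjoint from $D$, contradicting the Frankel property for $\Ric\ge0$ with convex boundary. Without this argument (or an equivalent), Step 2 does not go through. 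Similarly, "any least-area disk must itself be a min-max surface, hence has index one" is not justified: having area equal to $W$ does not make $\Delta$ the output of the min-max procedure. The paper instead proves index one directly: if the index were $\ge2$, one deforms the sweepout of Proposition~\ref{prop:sweepout} in the direction of the second eigenfunction to push the maximal area strictly below $W(\Lambda_0,g)$, a contradiction.

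Step 3 is correctly structured, and invoking the half-ball rigidity (Theorem~\ref{th:rigidhalfball}) as an external input is legitimate since the paper proves it independently. But your sketched route to that rigidity --- a one-parameter family of minimal disks issuing from $\Delta$ --- is not the paper's and is left entirely speculative; the actual proof is a Reilly-type argument: solve $\barre\Delta u=-2n$ with boundary data adapted to $\Delta$ and $\Sigma$, show via Bochner that the $P$-function $F=\tfrac12|\barre\nabla u|^2+2u$ is subharmonic, analyze $\partial_\nu F$ on each boundary piece and the behavior at the corner, and deduce $\barre\nabla^2u=-2g$ in the equality case, which forces the metric to be Euclidean via the exponential map at the center of $\Delta$. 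So while your outline identifies the right ingredients, the existence/width/index portion needs the constrained min-max plus Frankel argument and the explicit second-eigenfunction deformation to be complete.
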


%\begin{thm}\label{th:main}
%Let $g$ be a Riemannian metric on the $3$-ball $B$ with non-negative Ricci 
%curvature and such that $\ii_{\partial B}\ge g_{|\partial B}$ where 
%$\ii_{\partial B}$ is the second fundamental form of $\partial B$ with respect 
%to the inward unit normal. Then there is a free boundary minimal disk $\Delta$ 
%such that
%\[
%|\Delta|=\inf\{|D|;D\text{ a free boundary minimal disk}\}
%\]
%Moreover, any such $\Delta$ satisfies the following properties:
%\begin{itemize}
%\item $|\Delta|=W(B,g)$;
%\item $\Delta$ has index one;
%\item $L(\partial \Delta)\le 2\pi$.
%\end{itemize}
%Besides the equality $L(\partial \Delta)=2\pi$ holds if and only if $(B,g)$ is 
%isometric to the Euclidean unit ball.
%\end{thm}

Actually, it would be interesting to have an estimate of the area of the free 
boundary minimal disk and then of the width $W(B,g)$. When the non-negativity 
of the Ricci curvature is replaced by the non-negativity of the sectional 
curvature, we 
are able to prove that $\Delta$ has area at most~$\pi$. Actually, under this 
assumption, we can prove that $|\Sigma|\le \frac12 L(\partial\Sigma)$ for any 
free boundary minimal surface $\Sigma$.

The proof of the rigidity statement is based on a characterization of the 
Euclidean unit half-ball $B_+^n=\{(x_1,\dots,x_n)\in\R^n\mid x_1^2+\cdots+x_n^2\le1\text{ and }x_n\ge 0\}$ which is true 
in any dimension (see Theorem~\ref{th:rigidhalfball} for a precise statement).
\begin{thm}\label{th:rigid}
Let $g$ be a Riemannian metric on $B_+^n$ with non-negative Ricci curvature and 
such that the mean curvature of $\S^n\cap B_+^n$ is at least $n-1$. We also 
assume that 
$B_+^n\cap\{x_n=0\}$ is totally geodesic, isometric to the Euclidean unit 
$(n-1)$-ball and meets $\S^n\cap B_+^n$ orthogonally. Then $(B_+^n,g)$ is 
isometric to the Euclidean $B_+^n$.
\end{thm}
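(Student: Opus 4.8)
The plan is to reduce the statement to a doubling argument combined with the closed/one-sided rigidity that follows from the minimization property of totally geodesic hypersurfaces in manifolds with nonnegative Ricci curvature. The key observation is that the hypotheses are exactly those needed to reflect $(B_+^n,g)$ across the totally geodesic face $F=B_+^n\cap\{x_n=0\}$: since $F$ is totally geodesic and meets $\S^n\cap B_+^n=:\Sigma$ orthogonally, the doubled metric $\hat g$ on the closed $n$-ball $\hat B=B_+^n\cup_F B_+^n$ is $C^{1,1}$ (indeed $C^2$ away from $\partial\hat B$, and across $F$ the odd reflection of the metric is smooth because the second fundamental form of $F$ vanishes), has $\Ric_{\hat g}\ge 0$ in the support sense, and its boundary $\partial\hat B=\hat\Sigma$ (the double of $\Sigma$ along $\partial\Sigma$, which is smooth because $\Sigma$ meets $F$ orthogonally) has mean curvature $\ge n-1$. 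So the problem becomes: a metric on the $n$-ball with $\Ric\ge 0$ and boundary mean curvature $\ge n-1$ must be the Euclidean unit ball. First I would therefore establish this doubled statement, which is precisely the content of the rigidity result advertised as Theorem~\ref{th:rigidhalfball} in the introduction.

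To prove the doubled statement I would use the standard comparison/rigidity package. Fix a point $p$ in the interior; the distance function $r(x)=d_{\hat g}(p,\partial\hat B)$ together with the Laplacian comparison (Bochner plus $\Ric\ge 0$) gives $\Delta r\le (n-1)/r$ along minimizing geodesics, while the boundary condition $H_{\partial\hat B}\ge n-1$ forces, via the first variation of the area of geodesic spheres hitting the boundary, a matching lower bound; integrating the monotonicity of the area ratio $|\partial B_r(p)|/r^{n-1}$ and using that the outward normal exponential map from $\partial\hat B$ sweeps out all of $\hat B$ within distance $1$ (again by mean-curvature comparison: focal points occur no later than the Euclidean time), one gets that the inradius is exactly $1$, that all the comparison inequalities are equalities, hence that $\hat g$ is the flat metric on the unit ball. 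Concretely, I would run the argument of Reilly (or the Heintze–Karcher inequality for manifolds with boundary): $\int_{\hat B} \Ric(\nabla f,\nabla f) + \|\nabla^2 f\|^2 - (\Delta f)^2 = $ boundary terms, applied to the solution $f$ of $\Delta f=1$ in $\hat B$, $f=0$ on $\partial\hat B$; nonnegativity of $\Ric$ and Newton's inequality $(\Delta f)^2\le n\|\nabla^2 f\|^2$ together with $H\ge n-1$ pin down $\nabla^2 f=\frac1n g$, whence $(\hat B,\hat g)$ is the Euclidean ball and in particular $f=\frac{1}{2n}(|x|^2-1)$.

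Having the doubled rigidity, I would then descend: the isometry $\hat B\to B_+^n(\mathrm{eucl})\cup_F B_+^n(\mathrm{eucl})$ can be arranged to be equivariant with respect to the reflection $R$ across $F$ (since $F$ is the fixed-point set of the reflection isometry of $\hat g$, and the flat ball has a unique totally geodesic $(n-1)$-disk through the corresponding points, which must be the image of $F$; alternatively average the isometry with $R$), so it restricts to an isometry $(B_+^n,g)\to B_+^n(\mathrm{eucl})$. A small point to check here is that the reflection of $(B_+^n,g)$ really is a $\hat g$-isometry of $\hat B$ fixing $F$ — this is immediate from the construction of the double — and that $F$ with its induced metric, being totally geodesic and isometric to the Euclidean $(n-1)$-ball by hypothesis, is carried to the flat equatorial disk; orthogonality of $F$ and $\Sigma$ guarantees the boundary edges match up so that no extra identification is needed.

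The main obstacle I anticipate is the regularity of the doubled metric across $F$ and ensuring the comparison machinery (Heintze–Karcher / Reilly) is valid in this $C^{1,1}$, corners-at-$\partial F$ setting: one must verify that $F$ totally geodesic makes the odd extension of $\hat g$ genuinely $C^2$ (so no distributional curvature on $F$), handle the edge $\partial F=\Sigma\cap F$ where $\partial\hat B$ is only $C^{1,1}$ if orthogonality fails to give smoothness — here orthogonality is exactly what makes $\hat\Sigma$ smooth — and justify that geodesics and the solution of the Dirichlet problem behave well enough for the integral identity and the equality discussion to go through. An alternative that avoids doubling altogether, which I would keep in reserve, is to work directly on $(B_+^n,g)$ with a mixed Dirichlet–Neumann problem: solve $\Delta f=1$ with $f=0$ on $\Sigma$ and $\partial_\nu f=0$ on $F$, and run Reilly's formula on the half-ball, using that $F$ is totally geodesic to kill its boundary contribution and that $\Sigma$ meets $F$ orthogonally to control the edge term; the equality case then gives $\nabla^2 f = \frac1n g$ on $B_+^n$ directly, and one integrates to recover the flat half-ball. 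I expect the doubling route to be cleaner for the exposition, so I would present that, relegating the regularity verification to a lemma.
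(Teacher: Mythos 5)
There is a fatal gap at the very first reduction. The ``doubled statement'' you propose to prove --- \emph{a metric on the $n$-ball with $\Ric\ge 0$ and boundary mean curvature $\ge n-1$ must be the Euclidean unit ball} --- is false: the Euclidean ball of any radius $r\le 1$ has $\Ric=0$ and $H_{\partial}=(n-1)/r\ge n-1$. The hypothesis that pins down the scale in Theorem~\ref{th:rigid} is that the face $F=B_+^n\cap\{x_n=0\}$ is isometric to the Euclidean \emph{unit} $(n-1)$-ball, and your doubling discards exactly this datum (after doubling it becomes the existence of a reflection-invariant totally geodesic unit disk, which you never use). The same defect kills your proposed proof of the doubled statement: Reilly's identity applied to $\bar\Delta f=1$, $f=0$ on the boundary, together with $\Ric\ge0$, Newton's inequality and $H\ge n-1$, yields only Ros's inequality $n\Vol(M)\le\frac{n-1}{1}\int_{\partial M}H^{-1}\le |\partial M|$; nothing forces equality, so you cannot conclude $\barre\nabla^2f=\frac1n g$. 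Likewise the focal-point comparison gives inradius $\le 1$, not $=1$. Your fallback (mixed problem with $\partial_\nu f=0$ on $F$) is equivalent to the Dirichlet problem on the double by reflection and fails for the same reason. A secondary but real issue is that orthogonality of $\Sigma$ and $F$ only makes the doubled boundary $C^{1,1}$ across $\partial F$ (the tangent planes match, but the second fundamental forms need not), so even the corrected comparison machinery would need care there.

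For contrast, the paper keeps the unit-disk normalization in play by solving $\barre\Delta u=-2n$ with \emph{Dirichlet} data on both faces: $u=0$ on $\Sigma$ and $u=R^2-\rho^2$ on $\Delta$ (where $\rho$ is the intrinsic distance to the center of $\Delta$, so that $\Delta u=-2(n-1)$ there precisely because $\Delta$ is a flat $R$-ball). It then shows the $P$-function $F=\frac12|\barre\nabla u|^2+2u$ is subharmonic and runs a boundary maximum-principle argument (not an integrated Reilly identity), comparing $\sup F$ with $F(\bar p)\ge 2R^2$ at the center of $\Delta$ to get $R\le1$ and rigidity when $R=1$; a separate blow-up analysis handles the corner $\Sigma\cap\Delta$. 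If you want to salvage your approach, you must either carry the unit-disk hypothesis through the doubling or adopt a boundary condition on $F$ that encodes it, as the paper does.
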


The proof of the above result is based on ideas that appear in the work of 
R.~Reilly~\cite{Rei} and also used by C.~Xia~\cite{Xia}.

\subsection*{Organization of the paper} In Subsection~\ref{subsec:definitions} 
we recall some important definitions for the context of free boundary minimal 
hypersurfaces. In Subsection~\ref{subsec:nicefoliation} we introduce and prove 
the existence of the \textit{nice foliation} associated with a free boundary 
minimal hypersurface of index at least one. In 
Subsection~\ref{subsec:areaofFBMH} we state and prove an independent result 
that furnishes a sharp area estimate for free boundary minimal hypersurfaces 
in 
terms of their perimeter when the ambient Riemannian manifold has 
non-negative sectional curvature and strictly convex boundary. As an 
application, if $g$ in Theorem~\ref{th:main} has non-negative sectional 
curvature, then $|\Delta|\le\pi$ and the equality holds if and only if $(B,g)$ 
is isometric to the Euclidean unit ball. In Section~\ref{sec:minmax} we 
recall 
some min-max constructions of free boundary minimal surfaces in Riemannian 
$3$-manifolds with strictly convex boundary. There, we study two different 
cases: the first one applies to compact $3$-manifolds with smooth boundary, 
this is called the \textit{unconstrained case} 
(Subsection~\ref{subsec:unconstrained}); the second deals with compact 
$3$-manifolds with piecewise smooth boundary and, in this case, we want to 
prevent the min-max minimal surface to attach to a certain part of the 
boundary, this is called the \textit{constrained case} 
(Subsection~\ref{subsec:constrained}). In Section~\ref{sec:topologicalcontrol} 
we explain how to control the topology of the minimal surface obtained through 
the min-max constructions in the preceding section. Notice that a similar 
topological control has been also obtained very recently by G.~Franz and 
M.~Schulz \cite{FrSc}. In 
Section~\ref{sec:rigid} we state and prove a rigidity result for the 
Euclidean half-ball (Theorem~\ref{th:rigidhalfball}), which plays an important 
role in the proof of Theorem~\ref{th:main}. Finally, in 
Section~\ref{sec:least} we use all the machinery presented in the previous 
sections to prove Theorem~\ref{th:main}. We end the paper with 
Appendix~\ref{sec:foliat} where 
we construct some mean-convex foliation under some geometric hypotheses. We 
don't use this construction in the paper but it allows to apply the min-max 
theory in some more general situations.

\subsection*{Notations} Among other notations, we will use $\boH^2$ to denote 
the $2$-dimensional Hausdorff measure. If $S$ is a $k$-submanifold of a 
Riemannian manifold, we will denote by $\vbf(S)$ the associated 
$k$-dimensional varifold.

\subsection*{Thanks} The authors would like to thank Martin Li for his answers 
about the min-max theory in the free boundary setting. They also thanks 
Robert Haslhofer for pointing us the reference~\cite{HaKe}.

\section{Preliminaries}\label{sec:prel}

\subsection{Definitions}\label{subsec:definitions}

We say that $M$ is an \textit{$n$-manifold with piecewise smooth boundary} if 
it has local $C^\infty$-charts given by open subsets of $\R_+^2\times\R^{n-2}$. 
The set of points in $\partial M$ corresponding to $\{(0,0)\}\times\R^{n-2}$ by 
the charts is called the \textit{corner} of $M$ and denoted by $\boC(M)$. 

Let $M$ be such an $n$-manifold with piecewise smooth boundary and $\Sigma$ be 
an $(n-1)$-manifold with boundary. We say that a smooth embedding 
$\phi:\Sigma\to M$ is \textit{proper} if
\[
\phi(\partial\Sigma)=\phi(\Sigma)\cap \partial M\subset\partial M\setminus\boC(M).
\]
If $\Sigma\subset M$ and $\phi$ is just the inclusion map, we say that $\Sigma$ is a \textit{properly embedded hypersurface} in~$M$.

Let $M$ be endowed with a Riemannian metric. If $\Sigma$ is a properly embedded 
hypersurface and $\{F_t\}$ is a smooth family of proper embeddings of $\Sigma$ 
in $M$ with $F_0=\id$, one can compute the $(n-1)$-volume of $F_t(\Sigma)$ 
(denoted by $|F_t(\Sigma)|$) and 
its derivative with respect to $t$ at time $t=0$. We have 
\[
\frac{d}{dt}|F_t(\Sigma)|_{|t=0}=-\int_\Sigma(X,\vec{H})+\int_{\partial\Sigma}(X,\nu),
\]
where $X=\frac{\p}{\p t}{F_t}_{|t=0}$ is the variation vector field, $\vec{H}$ is the mean curvature vector of $\Sigma$ in $M$ and~$\nu$ is the unit conormal to $\partial\Sigma$ in $\Sigma$. So $\Sigma$ is critical for the $(n-1)$-volume if its mean curvature vector vanishes and $\Sigma$ meets $\partial M$ orthogonally. We call such a hypersurface a \textit{free boundary minimal hypersurface}. More generally, a properly embedded hypersurface that meets~$\partial M$ orthogonally is called a \textit{free boundary hypersurface}.

Except at the corner, we denote by $\ii_{\partial M}$ the second fundamental 
form of the boundary of~$M$ with respect to the inward unit normal. If $\Sigma$ 
is a free boundary minimal hypersurface, and assuming that $\Sigma$ is 
two-sided and $X$ is normal to $\Sigma$, one can compute the second derivative 
of the $(n-1)$-volume functional: 
\begin{equation}\label{eq:stab}
\begin{split}
\frac{d^2}{dt^2}|F_t(\Sigma)|_{|t=0}&=\int_{\Sigma}-u(\Delta u+(\Ric(N,N)+\|A\|^2)u)+\int_{\partial\Sigma}u(\partial_\nu u-\ii_{\partial M}(N,N)u)\\
&=\int_{\Sigma}\|\nabla u\|^2-(\Ric(N,N)+\|A\|^2)u^2-\int_{\partial\Sigma}\ii_{\partial M}(N,N)u^2, 
\end{split}
\end{equation}
where $N$ is the unit normal to $\Sigma$, $u=(X,N)$, $A$ is the Weingarten map on $\Sigma$, and $\Ric$ is the Ricci tensor of $M$. Our sign convention is that in which $\vec{H}=HN$, where $H=\tr A$ is the mean curvature of $\Sigma$ with respect to $N$. Denoting by $Q(u,u)$ the last line in \eqref{eq:stab}, we obtain a quadratic form associated with the Jacobi operator $L$ of $\Sigma$,
\[
Lu=-\Delta u-(\Ric(N,N)+\|A\|^2)u.
\]
The index of the quadratic form $Q$ is called the \textit{index} of $\Sigma$ and it is given by the number of negative eigenvalues of $L$ with a Robin type boundary condition:
\[
\begin{cases}
Lu=\lambda u&\text{on }\Sigma,\\
\partial_\nu u-\ii_{\partial M}(N,N)u=0&\text{on }\partial\Sigma.
\end{cases}
\]

\subsection{The nice foliation}\label{subsec:nicefoliation}

Let $\Sigma$ be a free boundary minimal hypersurface in $M$. If $\Sigma$ has index at least one, we are going to construct a foliation of a tubular neighborhood of $\Sigma$ by free boundary hypersurfaces whose mean curvature vectors are nowhere vanishing and point away from $\Sigma$.

Near $\Sigma$, let us parametrized $M$ by $\Sigma\times (-1,1)$ with 
coordinates $(p,t)\in \Sigma\times (-1,1)$ such that the variation vector field 
$\p_t$ is normal to $\Sigma$ at $t=0$. If $u$ is a function defined on $\Sigma$ 
with small $L^\infty$-norm, we can consider its graph: the image of 
$X_u:p\in\Sigma\mapsto(p,u(p))\in\Sigma\times(-1,1)\subset M$. For such a 
function $u$, we denote by $H(u)(p)$ the mean curvature of $X_u(\Sigma)$ at 
$X_u(p)$ with respect to the unit normal that points in the same direction as 
$\partial_t$. We have the following result:

\begin{lem}\label{lem:nicefol}
With the above notations, there is a smooth family $\{u_t\}_{t\in(-\eps,\eps)}$ of functions on $\Sigma$ such that
\begin{itemize}
\item $u_0=0$ and $\partial_t u_t>0$;
\item $H(u_t)$ is nowhere vanishing and has the same sign as $t$ for $t\neq 0$;
\item $X_{u_t}(\Sigma)$ is a free boundary hypersurface.
\end{itemize}
\end{lem}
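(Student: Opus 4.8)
The plan is to find the family $\{u_t\}$ by solving, for each small parameter $s$, a prescribed-mean-curvature problem with an eigenfunction-type correction, and then reparametrize in $t$. Concretely, since $\Sigma$ has index at least one, the first eigenvalue $\lambda_1$ of the Jacobi operator $L$ with the Robin boundary condition $\partial_\nu u - \ii_{\partial M}(N,N)u = 0$ is negative, and the corresponding eigenfunction $\varphi$ can be taken positive on $\Sigma$. I would look for $u_t$ of the form $u = s\varphi + w$, where $w = w(s)$ is a higher-order correction constrained to be $L^2$-orthogonal to $\varphi$, and ask that $X_u(\Sigma)$ be a free boundary hypersurface whose mean curvature is a (variable) constant times $\varphi$; that is, solve
\begin{equation*}
H(u) = \mu(s)\,\varphi \quad\text{on }\Sigma, \qquad \partial_\nu u - \ii_{\partial M}(N,N)u = 0 \quad\text{on }\partial\Sigma,
\end{equation*}
for an unknown scalar $\mu(s)$ together with $w(s) \perp \varphi$. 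The first step is to set up this system and check that the linearization at $s=0$ is exactly $L$ acting on the subspace orthogonal to $\varphi$ (with the Robin condition built in), which is invertible there; the implicit function theorem in suitable Hölder or Sobolev spaces then produces smooth families $s\mapsto w(s)$ and $s\mapsto \mu(s)$ with $w(0)=0$, $\mu(0)=0$.

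The second step is to compute $\mu'(0)$ and show it is negative, so that $\mu(s)$ has the sign of $-s$. Pairing the equation $H(s\varphi + w) = \mu\varphi$ against $\varphi$ and differentiating in $s$ at $s=0$, the left side contributes $\langle DH_0[\varphi],\varphi\rangle = \langle -L\varphi,\varphi\rangle = -\lambda_1\|\varphi\|_{L^2}^2$ (using that $DH_0 = -L$ and the Robin condition to kill the boundary term), while the right side gives $\mu'(0)\|\varphi\|_{L^2}^2$; hence $\mu'(0) = -\lambda_1 > 0$. Wait — this says $\mu(s)$ has the \emph{same} sign as $s$; correspondingly the mean curvature $H(s\varphi+w) = \mu(s)\varphi$ has the sign of $s$. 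Since we also have $\partial_s(s\varphi+w)|_{s=0} = \varphi > 0$, the graphs are moving monotonically in the $\partial_t$ direction near $s=0$. After shrinking the range of $s$ so that $\mu(s)\ne 0$ and $\partial_s u \ne 0$ for $s\ne 0$, I would relabel the parameter (e.g. by arranging $t$ to be an increasing function of $s$, or simply taking $t=s$ after checking signs) to get the stated conclusions: $u_0 = 0$, $\partial_t u_t > 0$, $H(u_t)$ nowhere vanishing with the sign of $t$, and $X_{u_t}(\Sigma)$ a free boundary hypersurface.

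The main obstacle is the functional-analytic setup making the implicit function theorem genuinely applicable: one must choose function spaces (say $C^{2,\alpha}(\Sigma)$ mapping to $C^{0,\alpha}(\Sigma)\times C^{1,\alpha}(\partial\Sigma)$) in which the prescribed-mean-curvature operator $u\mapsto (H(u),\,\partial_\nu u - \ii_{\partial M}(N,N)u)$ is smooth, and verify that the linearization restricted to the $\varphi^\perp$-subspace, together with the one-dimensional span of $\mu$, gives an isomorphism onto the target — i.e. that the Robin boundary problem $L w = f$, $\partial_\nu w - \ii_{\partial M}(N,N)w = h$ is solvable modulo $\varphi$ with the solution orthogonal to $\varphi$. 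This is standard elliptic theory (Fredholm alternative for the self-adjoint Robin problem), but the free boundary condition and the corner-free properness requirement need care: one should check that graphs $X_u(\Sigma)$ of functions satisfying the Robin condition do meet $\partial M$ orthogonally to the order needed, or instead build orthogonality into the ambient coordinates $(p,t)$ from the start by choosing them so that $\partial M$ corresponds to $\partial\Sigma\times(-1,1)$ and $\partial_t$ is tangent to $\partial M$ along the boundary. With such adapted coordinates the free boundary condition for $X_u(\Sigma)$ reduces precisely to the Robin condition on $u$, and the rest is routine.
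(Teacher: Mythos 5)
Your overall strategy (implicit function theorem around the direction of the positive first eigenfunction, then reading off the sign of the mean curvature from $\lambda_1<0$) is the same as the paper's, and your sign computation $\mu'(0)=-\lambda_1>0$ is consistent with the paper's $\partial_t H(u_t)_{|t=0}=-\lambda_1\phi_1>0$. However, there is a genuine gap in the step you describe as "standard elliptic theory": you claim that the linearization of your system $H(s\varphi+w)=\mu\varphi$ at $s=0$, namely $(w,\mu)\mapsto(-Lw-\mu\varphi,\ \partial_\nu w-\ii_{\partial M}(N,N)w)$ restricted to $w\perp\varphi$, is invertible. This is false in general: the hypothesis is only that $\Sigma$ has index at least one, i.e.\ $\lambda_1<0$, and nothing prevents $0$ from being an eigenvalue of the Robin problem for $L$. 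If $\psi\neq0$ satisfies $L\psi=0$ with $\partial_\nu\psi-\ii_{\partial M}(N,N)\psi=0$, then $\psi\perp\varphi$ automatically and $(w,\mu)=(\psi,0)$ lies in the kernel of your linearization (the pairing with $\varphi$ forces $\mu=0$, but then $w$ is only determined up to the kernel of $L$). So the implicit function theorem does not apply as stated, and a degenerate $\Sigma$ would require an extra bifurcation/Lyapunov--Schmidt argument. The paper sidesteps this by not prescribing $H(u)$ to be a multiple of $\varphi$: it solves the shifted equation $\Pi\big(H(t\phi_1+v)\big)+\lambda_1 v=0$ for $v\perp\phi_1$, whose linearization $h\mapsto-\Pi(Lh)+\lambda_1 h$ is invertible on $\phi_1^\perp$ for \emph{any} $\Sigma$ with $\lambda_1<0$, because a kernel element would have to be a first eigenfunction orthogonal to $\phi_1$. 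The sign of $H(u_t)$ then still comes out of the first-order term $-\lambda_1 t\phi_1$, since $v_t=O(t^2)$.

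A secondary point: imposing the linear Robin condition $\partial_\nu u-\ii_{\partial M}(N,N)u=0$ exactly does not make $X_u(\Sigma)$ a free boundary hypersurface; orthogonality of the graph with $\partial M$ is a nonlinear first-order condition on $u$ (even in Fermi-type coordinates, since it involves the metric along the graph and not only along $\Sigma$). You partially anticipate this, but your proposed fix via adapted coordinates does not make the reduction exact. The correct repair, and what the paper does, is to put the nonlinear condition $\boN(u)=0$ (a determinant measuring tangency of the inward normal $\eta$ to the graph) into the system as the boundary component; its linearization is precisely the Robin operator, so the functional-analytic setup is unchanged.
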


With $\{u_t\}_{t\in(-\eps,\eps)}$ as in the above Lemma, the family $\{X_{u_t}(\Sigma)\}_{t\in(-\eps,\eps)}$ gives a foliation of a neighborhood of $\Sigma$ which we call the \emph{nice foliation} associated with $\Sigma$.

\begin{proof}
Since $\Sigma$ has index at least one, the first eigenvalue of the Jacobi operator is negative: there are $\lambda_1<0$ and $\phi_1$ a positive solution to
\[
\begin{cases}
\Delta\phi_1+(\Ric(N,N)+\|A\|^2)\phi_1+\lambda_1\phi_1=0&\text{on }\Sigma,\\
\partial_{\nu}\phi_1-\ii_{\partial M}(N,N)\phi_1=0&\text{on }\partial\Sigma.
\end{cases}
\]

Let $V^{k,\alpha}$ denote the $L^2$-orthogonal complement of $\phi_1$ in $C^{k,\alpha}(\Sigma)$ and $\Pi$ be the orthogonal projection onto $V^{k,\alpha}$. Let $\eta$ denote the inward unit normal to $\partial M$. Then, for $u$ a function on $\Sigma$ and $p\in\partial\Sigma$, we define $\boN(u)(p)=\det(dX_u(e_1),\ldots,dX_u(e_{n-1}),\eta(X_u(p)))$, where $(e_i)$ is a direct orthonormal basis of $T_p\Sigma$ (this does not depend on the choice of the basis). We then define the map
\[
F:\begin{array}{ccc}
\R\times V^{2,\alpha}&\longrightarrow&V^{0,\alpha}\times C^{1,\alpha}(\partial\Sigma),\\
(t,v)&\longmapsto&\Big(\Pi(H(t\phi_1+v))+\lambda_1 v,\boN(t\phi_1+v)\Big).
\end{array}
\] 
Notice that $F(0,0)=0$. We want to apply the implicit function theorem to $F$. 
Standard calculations show that (see \cite[Proposition~17]{Amb})
\[
D_vF(0,0)(h)=\big(-\Pi(Lh)+\lambda_1h,\partial_\nu h-\ii_{\partial M}(N,N)h\big).
\]
Let us see that $D_vF(0,0)$ is invertible. In fact, if $D_vF(0,0)(h)=0$, we have
\[
\int_\Sigma\phi_1Lh=\int_\Sigma hL\phi_1+\int_{\partial\Sigma}(-\phi_1\partial_\nu h+h\partial_\nu\phi_1)=\lambda_1\int_\Sigma h\phi_1-\int_{\partial\Sigma}\phi_1(\partial_\nu h-\ii_{\partial M}(N,N)h)=0.
\]
Therefore $\Pi(L(h))=L(h)$ and $h$ is a first eigenfunction of $L$, and thus $h=0$, since $h\in V^{2,\alpha}$ is orthogonal to $\phi_1$. Moreover, for $(f,g)\in V^{0,\alpha}\times C^{1,\alpha}(\partial\Sigma)$, one can find a solution $h\in V^{2,\alpha}$ such that $D_vF(0,0)(h)=(f,g)$. Indeed, let us first assume that $g\in C^{2,\alpha}(\partial\Sigma)$ and consider $\beta$ a function in $C^{2,\alpha}(\Sigma)$ such that $\partial_\nu \beta-\ii_{\partial M}(N,N)\beta=g$ (for example, $\beta=0$ and $\partial_\nu\beta=g$ on $\partial\Sigma$). By adding a multiple of $\phi_1$, we may assume that $\beta$ is orthogonal to $\phi_1$. The function $\tilde f=f-\lambda_1\beta+\Pi(L\beta)$ is then in $V^{0,\alpha}$. Thus the Fredholm alternative ensures the existence of a solution $u\in V^{2,\alpha}$ to the system
\[
\begin{cases}
-Lu+\lambda_1 u=\tilde f&\text{on }\Sigma,\\
\partial_\nu u-\ii_{\partial M}(N,N)u=0&\text{on }\p\Sigma.
\end{cases}
\]
The regularity of the solution comes from results given in \cite{Grs}. Then the function $u+\beta$ solves $D_vF(0,0)(u+\beta)=(f,g)$. When $g\in C^{1,\alpha}(\p\Sigma)$, we find the solution by approximating $g$ by functions in $C^{2,\alpha}(\p\Sigma)$ and using Schauder type estimates.

Hence the differential is invertible and there is a family $(v_t)_{t\in(-\eps,\eps)}$ such that $F(t,v_t)=0$ and $v_0=0$. In particular, $\partial_t{v_t}_{|t=0}=0$. Then $u_t=t\phi_1+v_t$ will satisfy the Lemma. In fact, the first item is satisfied and we notice that the graph of $u_t$ has free boundary, thanks to the second coordinate of $F$, so the last item is also satisfied. Besides,
\[
\partial_t H(u_t)_{|t=0}=-L\phi_1=-\lambda_1\phi_1.
\]
Then, since $\lambda_1<0$ and $\phi_1>0$, $H(u_t)$ has the expected sign. 
\end{proof}

\subsection{The area of free boundary minimal hypersurfaces}\label{subsec:areaofFBMH}

In the sequel, we will obtain an estimate for the perimeter of a free 
boundary 
minimal surface. For many reasons, it would be interesting to obtain an area 
estimate. In fact, when the sectional curvature is non-negative, we have such 
an estimate thanks to the following result:

\begin{prop}\label{prop:area}
Let $(M,g)$ be a Riemannian $n$-manifold with smooth boundary. Assume that 
$(M,g)$ has non-negative sectional curvature and $\ii_{\partial M}\ge 
g_{|\partial M}$. If $\Sigma$ is a compact free boundary minimal 
hypersurface in 
$(M,g)$, then
\[
(n-1)|\Sigma|\le|\partial\Sigma|.
\]
\end{prop}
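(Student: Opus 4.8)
The plan is to test the stability/variational structure on $\Sigma$ with well-chosen vector fields coming from the position vector or from coordinate functions, exploiting the two curvature hypotheses $\sec\ge 0$ and $\ii_{\p M}\ge g_{|\p M}$. Concretely, I would try the following. For each point $p\in M$, consider the (not necessarily normal) vector field given by restricting to $\Sigma$ a suitable ambient field and take its tangential part; but a cleaner route is to use the first variation formula directly. Recall that for any ambient vector field $X$ along $\Sigma$ we have $\frac{d}{dt}|F_t(\Sigma)|_{|t=0}=-\int_\Sigma(X,\vec H)+\int_{\p\Sigma}(X,\nu)$, and since $\Sigma$ is free boundary minimal, $\vec H=0$ and $X$ can be split. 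The key identity I would use is the one obtained by taking $X$ to be a parallel-like vector field or, more robustly, by integrating $\Div_\Sigma X$ over $\Sigma$: for the position-type field, $\Div_\Sigma X = (n-1)$ plus curvature corrections that are controlled in sign by $\sec\ge 0$.

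The main steps in order: (1) Fix a point and use geodesic normal coordinates or the distance function to build an ambient vector field $X$ whose tangential divergence along any $(n-1)$-submanifold through that region is $\ge n-1$ up to terms involving the ambient curvature, with the curvature terms having a favorable sign under $\sec\ge 0$ (this is the standard computation behind monotonicity formulas and the Hersch-type trick). (2) Apply the divergence theorem on $\Sigma$: $\int_\Sigma \Div_\Sigma X = \int_{\p\Sigma}(X,\nu)$, using that $\Sigma$ is minimal so the normal component of $X$ contributes nothing to $\Div_\Sigma X$ beyond the intrinsic divergence. This yields $(n-1)|\Sigma| \le \int_{\p\Sigma}(X,\nu)$ modulo the curvature term. (3) On the boundary integral, use the free boundary condition ($\nu = \eta$, the inward normal of $\p M$ restricted suitably — actually $\nu$ is tangent to $\p M$) together with $\ii_{\p M}\ge g_{|\p M}$ to bound $(X,\nu)$ pointwise by the speed, obtaining $\int_{\p\Sigma}(X,\nu)\le |\p\Sigma|$. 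Combining (2) and (3) gives the claim.

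The technical heart — and the main obstacle — is constructing the right vector field $X$ so that both inequalities (the interior divergence bound using $\sec\ge0$ and the boundary bound using $\ii_{\p M}\ge g_{|\p M}$) come out with the correct signs simultaneously. A natural candidate is $X=\nabla f$ where $f$ solves an auxiliary PDE on $(M,g)$ — for instance, $\Delta f = n$ in $M$ with $\p_\eta f = 1$ on $\p M$ (a Reilly-type setup, exactly as invoked for Theorem~\ref{th:rigid}) — chosen so that the Bochner formula together with $\sec\ge0$ (or even just $\Ric\ge0$) forces $\mathrm{Hess}\,f$ to be controlled, and the boundary condition interacts well with $\ii_{\p M}$. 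Then $\Div_\Sigma(\nabla f) = \Delta f - \mathrm{Hess}\,f(N,N) - (\vec H, \nabla f)$, the last term vanishing by minimality; one integrates and uses the Hessian estimate plus $\p_\eta f=1$ and the convexity of $\p M$ to push the boundary term below $|\p\Sigma|$. Verifying that the solvability condition ($\int_M \Delta f = \int_{\p M}\p_\eta f$, i.e.\ $n\Vol(M) = |\p M|$) is automatically compatible, or rescaling to make it so, is a bookkeeping point I would check carefully; the sign chasing in the Bochner-Reilly integration by parts is where the two hypotheses must be balanced, and that is the step most likely to require care.
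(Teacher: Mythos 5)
Your overall strategy---integrate the tangential divergence of a well-chosen ambient gradient field over $\Sigma$, bound it below pointwise by $n-1$ using the curvature hypotheses, and control the flux through $\partial\Sigma$ using the free boundary condition---is exactly the shape of the paper's argument. But the concrete vector field you propose does not work, and the correct choice is the one missing idea. A Reilly-type solution of $\barre\Delta f=n$ with $\partial_\eta f=1$ gives, via Bochner, only an \emph{integral} identity over $M$ for $|\barre\nabla^2f|^2$; it gives no pointwise control of $\barre\nabla^2f(N,N)$ along the (a priori unknown) hypersurface $\Sigma$, so you cannot conclude $\Div_\Sigma(\nabla f)=\barre\Delta f-\barre\nabla^2f(N,N)\ge n-1$ at any particular point. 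Your alternative suggestion (a position-type field built from the distance to a point) fails for a sign reason: under $\sec\ge0$ the Hessian comparison gives $\barre\nabla^2(r^2/2)\le g$, i.e.\ an \emph{upper} bound $\Div_\Sigma\le n-1$ (this is the monotonicity-formula direction), whereas you need a lower bound.

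The paper's choice is $\bar f=d^2-2d$ where $d$ is the distance to $\partial M$. Here both hypotheses enter through the focal-point/Riccati comparison (Karcher): $\sec\ge0$ together with $\ii_{\partial M}\ge g_{|\partial M}$ forces the parallel hypersurfaces $\{d=t\}$ to satisfy $\ii_{\{d=t\}}\ge\frac1{1-t}g$, which yields the pointwise bound $\barre\nabla^2\bar f\ge 2g$ wherever $d$ is smooth, hence $\Delta_\Sigma(\bar f_{|\Sigma})\ge 2(n-1)$ by minimality; and on $\partial\Sigma$ the free boundary condition gives $\partial_\nu\bar f=2$ exactly, so Stokes produces $2(n-1)|\Sigma|\le 2|\partial\Sigma|$. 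Two further points you would still need to handle: $d$ is only Lipschitz (cut locus/focal points), so the interior inequality holds only in the viscosity sense---the paper deals with focal points by a conformal perturbation $e^{2\eps u}g$ of the metric---and the final integration by parts must be replaced by a comparison with a smooth auxiliary Neumann solution on $\Sigma$ plus the Hopf boundary point argument. So: right framework, but the essential construction (distance to the boundary, not a Reilly potential) and the regularity issues it creates are absent.
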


\begin{proof}
Let $q$ be a point in $\partial M$ and $U$ a neighborhood of $q$ in $\partial 
M$. Consider $F:U\times [0,T)\to M$ the map defined by 
$F(x,t)=\exp_x(t\eta(x))$, where $\eta(x)$ is the inward unit normal to 
$\partial M$. If $s$ is less than the first focal time along the geodesic 
$t\mapsto F(q,t)$, then $F$ is a local diffeomorphism from a neighborhood of 
$(q,s)$ to a neighborhood of $p=F(q,s)$. On this neighborhood, we can then 
define a ``distance'' function $d_q$ by $d_q(F(x,t))=t$. Moreover, if 
$d_q(m)=t$, its Hessian $\barre\nabla^2d_q$ at $m$ is given by 
$-\ii_{\{d_q=t\}}$, where the second fundamental form is computed with respect 
to the unit normal $\barre\nabla d_q$. By comparison with Euclidean space (see 
H.~Karcher~\cite{Kar2}), the curvature assumptions give that 
$\ii_{\{d_q=t\}}\ge\frac1{1-t}g_{|\{d_q=t\}}$. This ensures that $d_q$, and 
$s$, are bounded from above by $1$ and, for $\bar f_q=d_q^2-2d_q$, 
$\barre\nabla^2\bar f_q\ge2g$.

Let $d$ be the distance function to $\partial M$. If $d(p)$ is realized by a 
geodesic between $p$ and $q\in\partial M$ such that $d_q$ is defined near $p$, 
then $d\le d_q\le 1$ and $d(p)=d_q(p)$; notice that $d\le 1$ on all of $M$. Let 
us define $\bar f=d^2-2d$. Let $\Sigma$ be a compact free boundary 
minimal hypersurface in $M$ and consider~$f$ the restriction of $\bar f$ to 
$\Sigma$. We are going to prove the following:
\begin{claim*}
In the viscosity sense, $\Delta f\ge 2(n-1)$ (see \cite{Bar,CrIsLi}).
\end{claim*}

\begin{proof}[Proof of the claim]
For $p\in\Sigma$, let $\phi$ be a smooth function on a neighborhood of $p$ in $\Sigma$ such that $\phi\ge f$ and $\phi(p)=f(p)$. Let $\gamma$ be a unit speed geodesic with $\gamma(0)=q\in\partial M$ and $\gamma(s)=p$ that realizes the distance to $\p M$, \textit{i.e.} $d(p)=s$. If $s$ is less than the first focal time along~$\gamma$, then $d\le d_q\le 1$ and $d(p)= d_q(p)$. Therefore $\phi\ge f\ge d_q^2-2d_q$ with equality at $p$. Hence $\Delta \phi(p)\ge \Delta(d_q^2-2d_q)(p)\ge 2(n-1)$, since $\Sigma$ is minimal.

If $s$ is the first focal time along $\gamma$, we will modify the metric $g$ to 
obtain the result. To do so, we will construct a function $u$ on $M$ with 
compact support in a neighborhood of $\gamma(s/2)$ that does not contain $p$ 
such that $u=0$ and $\barre\nabla u=0$ along $\gamma$ and 
$\barre\nabla^2u(\gamma(t))=\alpha(t)\pi^*g$ for some non-negative function 
$\alpha(t)$ with $\alpha(s/2)>0$, where $\pi$ is the orthonormal projection 
onto the normal bundle of $\gamma$ and $\pi^*g(a,b)=g(\pi(a),\pi(b))$. In fact, 
consider a local chart near $\gamma(s/2)$ with coordinates $(x_1,\ldots,x_n)$ 
in the open $n$-cube 
$\boC_\delta=(s/2-\delta,s/2+\delta)\times(-\delta,\delta)\times\cdots\times(-\delta,\delta)$
 with $0<\delta<s/2$ such that, in these coordinates, $\gamma$ is the curve 
$t\mapsto(t,0,\ldots,0)$ and the Riemannian metric $g$ satisfies 
$g_{ij}=\delta_{ij}$ along $\gamma$. Let $\alpha(t)$ be a non-negative function 
with compact support in $(s/2-\delta/2,s/2+\delta/2)$ such that 
$\alpha(s/2)>0$. Then we can define $u$ by
\[
u(x_1,\ldots,x_n)=\alpha(x_1)\varphi(x_2,\ldots,x_n)\frac{x_2^2+\cdots+x_n^2}{2}
\]
in the coordinate neighborhood and $u=0$ outside it, where $\varphi$ is a 
non-negative function with compact support in $(-\delta,\delta)^{n-1}$ such 
that $\varphi=1$ on $(-\delta/2,\delta/2)^{n-1}$. Notice that $u$ has compact 
support in $\boC_\delta$. Now, let $g_\eps=e^{2\eps u}g$ for $\eps>0$. Along 
$\gamma$, the metrics $g$ and $g_\eps$ coincide, $\gamma$ is a geodesic for 
$g_\eps$ and the covariant derivatives $\frac{D}{dt}$ and $\frac{D_\eps}{dt}$ 
are the same. Moreover, the curvature tensor associated with $g_\eps$ satisfies
\[
R_\eps(\gamma'(t),X)\gamma'(t)=R(\gamma'(t),X)\gamma'(t)-\eps\alpha(t)X
\]
for any vector field $X$ orthogonal to $\gamma$ (see~\cite{Bes}).

For $0<a\le s$, let $\boV_a$ be the space of continuous and piecewise differentiable vector fields $V$ along $\gamma_{|[0,a]}$ such that $V\perp\gamma'$ and $V(a)=0$. For $V,W\in \boV_a$, let us define
\[
I_a^\eps(V,W)=\int_0^a(V',W')-(R_\eps(\gamma',V)\gamma',W)dt.
\]
By the index theorem (see~\cite{DoC}), the index of $I_a^\eps$ is given by the number of $t\in(0,a)$ that is a focal time of $\gamma$, for the metric $g_\eps$, each one counted with multiplicity. Notice that 
\[
I_a^\eps(V,V)=I_a^0(V,V)+\eps\int_0^a\alpha(t)|V|^2dt\ge I_a^0(V,V).
\]
Therefore, the index of $I_a^\eps$ is less than or equal to the index of 
$I_a^0$. On the other hand, by assumption, the first focal time along $\gamma$ 
is $s$, so $I_a^0$ is non-negative for $a\le s$ (no time $t<s$ is a focal 
time). If $s$ is a focal time of $\gamma$ for the metric $g_\eps$, there is a 
nonzero Jacobi field $J$ along $\gamma$ with $J'(0)=0$ and $J(s)=0$. Thus
\[
0=I_s^\eps(J,J)=I_s^0(J,J)+\eps\int_0^{s}\alpha(t)|J|^2dt\ge\eps\int_0^{s}\alpha(t)|J|^2dt>0,
\]
which is a contradiction. So the first focal time along $\gamma$ for the metric $g_\eps$ is strictly larger than~$s$ for $\eps>0$.

We notice that, for the metric $g_\eps$, the distance function $d^\eps$ to $\partial M$ satisfies $d^\eps\ge d$ and $d^\eps(p)=d(p)$, since we keep the length of $\gamma$. Bearing in mind that $s$ is not a focal time for $g_\eps$, there is a local distance function $d_q^\eps$ defined near $p$ such that $d^\eps\le d_q^\eps$ and $d^\eps(p)=d_q^\eps(p)$. The metric $g_\eps$ has sectional curvature bounded from below by some $-c_\eps^2$ with $c_\eps>0$ and $c_\eps\to0$ as~$\eps\to0$. By comparison with the hyperbolic space~\cite{Kar2}, we have 
\[
\barre\nabla^2d_q^\eps(p)\le-c_\eps\frac{(1+c_\eps)+(1-c_\eps)e^{2c_\eps s}}{(1+c_\eps)-(1-c_\eps)e^{2c_\eps s}}\,g_{|\{d_q^\eps=s\}};
\]
notice that, near $p$, $g_\eps=g$. We then have $\phi\ge f\ge(d_q^\eps)^2-2d_q^\eps$ with equality at $p$. Hence
\[
\Delta\phi(p)\ge\Delta\big((d_q^\eps)^2-2d_q^\eps\big)\ge2(n-1)\min\left(1,(1-s)c_\eps\frac{(1+c_\eps)+(1-c_\eps)e^{2c_\eps s}}{(1+c_\eps)-(1-c_\eps)e^{2c_\eps s}}\right),
\]
where above we have used that $\Sigma$ is minimal at $p$ for $g_\eps$. Finally, letting $\eps\to0$, we get $\Delta\phi(p)\ge2(n-1)$.
\end{proof}

The function $f$ is smooth near $\partial\Sigma$ and $\partial_\nu f=2$ on $\partial\Sigma$. So, if $f$ was smooth, integrating $\Delta f\ge2(n-1)$ on $\Sigma$ and applying Stokes formula would give the expected result. Since \textit{a priori} $f$ is not smooth, we proceed as follows. Let $c=\frac{|\partial\Sigma|}{|\Sigma|}$ and assume that $c<n-1$. Then there is a smooth function $u$ on $\Sigma$ solving the problem:
\[
\begin{cases}
\Delta u=2c&\text{on }\Sigma,\\
\partial_\nu u=2&\text{on }\partial\Sigma.
\end{cases}
\]
Let us consider $m_0=\min_{\partial\Sigma}u$. Thus $u-m_0\ge 0=f$ on $\partial \Sigma$. Since $c<n-1$, the maximum principle for viscosity subsolutions~\cite[Theorem~3.3]{CrIsLi} gives that $u-m_0\ge f$ on $\Sigma$. Now, let $p\in \partial \Sigma$ be such that $u(p)=m_0$. Near $p$, both $u$ and $f$ are smooth, and we have $u-m_0\ge f$, $u(p)-m_0=f(p)$, $\partial_\nu(u-m_0)(p)=2=\partial_\nu f(p)$ and $\Delta(u-m_0)(p)=2c<2(n-1)\le \Delta f(p)$, which is impossible. So we have proved that $c\ge(n-1)$, that is, $(n-1)|\Sigma|\le|\partial\Sigma|$.
\end{proof}

\begin{rmk}
We state the above result for hypersurfaces. However, we can remark that the 
proof works in any codimension: for any minimal $k$-submanifold $\Sigma$ with 
free boundary in $M$, one has $k|\Sigma|\le|\partial\Sigma|$.
\end{rmk}

\section{Min-max theorems}
\label{sec:minmax}
In this section, we recall some min-max constructions of free boundary minimal surfaces in Riemannian $3$-manifolds with convex boundary. These constructions first appeared in the work of M.~Grüter and J.~Jost~\cite{GrJo1} for free boundary disks in convex domains of $\R^3$ and then they were extended to manifolds with convex boundary by J.~Jost~\cite{Jos2}. Here we give a presentation of these constructions as they appear in the work of T.H.~Colding and C.~De Lellis~\cite{CoDeL} for the no boundary case and in the work of M.M.-C.~Li~\cite{Li_m} for the boundary case.

We study two different cases: the first one applies to compact $3$-manifolds with smooth boundary; the second deals with compact $3$-manifolds with piecewise smooth boundary and we want to prevent the minimal surface to attach to a certain part of the boundary.

\subsection{The unconstrained case}\label{subsec:unconstrained}

Let $M$ be a compact Riemannian $3$-manifold with smooth boundary and $I=[a,b]$. A family $\{\Sigma_t\}_{t\in I}$ of closed subsets of $M$ is a \textit{generalized smooth family of surfaces}, or a \textit{sweepout}, if there are finite subsets $T\subset I$ and $P\subset M$ such that:
\begin{itemize}
\item[$(a)$] $\boH^2(\Sigma_t)$ is a continuous function of $t$;
\item[$(b)$] $\Sigma_t\to\Sigma_{t_0}$ in the Hausdorff topology whenever $t\to t_0$;
\item[$(c)$] For every $t\in I\setminus T$, $\Sigma_t$ is a compact surface in $M$ with (possibly empty) boundary such that $\partial\Sigma_t=\Sigma_t\cap\partial M$ and this intersection is transverse;
\item[$(d)$] For $t\in T$, $\Sigma_t\setminus P$ is a surface in $M\setminus P$ with boundary satisfying $\partial(\Sigma_t\setminus P)=(\Sigma_t\setminus P)\cap\partial M$ and this intersection is transverse;
\item[$(e)$] $\Sigma_t$ varies smoothly in $I\setminus T$;
\item[$(f)$] For $\tau\in T$, $\Sigma_t\to\Sigma_\tau$ smoothly in $M\setminus P$ as $t\to\tau$. 
\end{itemize}

Let $\Diff_0$ be the set of diffeomorphisms of $M$ that are isotopic to the identity. If $\{\Sigma_t\}_{t\in I}$ is a generalized smooth family of surfaces in $M$ and $\psi:I\times M\to M$ is a smooth map such that $\psi_t=\psi(t,\cdot)\in\Diff_0$, one can define a new generalized smooth family of surfaces by $\{\psi_t(\Sigma_t)\}_{t\in I}$. A set $\Lambda$ of generalized smooth families of surfaces is said to be \textit{saturated} if it is closed under the above operation and the cardinal of the set of singular points $P$ is uniformly bounded among the elements of $\Lambda$.

One can define the maximal area of a generalized smooth family of surfaces $\{\Sigma_t\}_{t\in I}$ by
\[
\mathcal{F}(\{\Sigma_t\})=\max_{t\in I}\boH^2(\Sigma_t).
\]
Then, if $\Lambda$ is a saturated set of generalized smooth families of surfaces, one defines its \textit{width} with respect to the Riemannian metric $g$ on $M$ by
\[
W(\Lambda,g)=\inf_{\sigma\in \Lambda}\mathcal{F}(\sigma).
\]

\begin{exam}
In this paper, we will only consider the case where $M$ is the unit $3$-ball
\[
B=\{(x,y,z)\in\R^3\mid x^2+y^2+z^2\le 1\}.
\]
One generalized smooth family of surfaces is $\{\Gamma_t\}$ given by
\[
\Gamma_t=B\cap\{z=t\},\text{ for }t\in [-1,1].
\]
Besides, $\Lambda_0$ will denote the smallest saturated set of sweepouts containing $\{\Gamma_t\}$. We define $W(B,g)=W(\Lambda_0,g)$ for any Riemannian metric $g$ on $B$.
\end{exam}

\begin{thm}\label{th:min_max}
Assume that $M$ is endowed with a Riemannian metric $g$ such that $\ii_{\partial M}$ is positive definite. Let $\Lambda$ be a saturated set of generalized smooth families of surfaces with $W(\Lambda,g)>0$. Then there exist a collection $\{\Gamma_i\}_{1\le i\le N}$ of free boundary minimal surfaces in $M$ and a set $\{n_i\}_{1\le i\le N}$ of positive integers such that
\[
W(\Lambda,g)=\sum_{i=1}^N n_i|\Gamma_i|.
\]
\end{thm}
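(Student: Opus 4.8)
\textbf{Proof plan for Theorem~\ref{th:min_max}.}
The plan is to run the standard Almgren--Pitts/Colding--De Lellis min-max machinery in the free boundary setting, following Li~\cite{Li_m}. First I would produce a \emph{minimizing sequence} $\{\Sigma_t^k\}_{t\in I}\in\Lambda$ with $\mathcal{F}(\{\Sigma_t^k\})\to W(\Lambda,g)$, and from it, by a pull-tight argument, a \emph{min-max sequence}: a sequence of slices $\Sigma^j=\Sigma_{t_j}^{k_j}$ with $\boH^2(\Sigma^j)\to W(\Lambda,g)$ whose associated varifolds $\vbf(\Sigma^j)$ converge (after passing to a subsequence) to a stationary integral varifold $V$ in $(M,g)$ in the free boundary sense, i.e.\ stationary with respect to ambient vector fields tangent to $\partial M$. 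The hypothesis $W(\Lambda,g)>0$ guarantees $V\neq0$, and the uniform bound on the number of singular points $P$ is what makes the pull-tight and the interpolation arguments go through within the saturated class $\Lambda$.

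Next I would upgrade $V$ from merely stationary to \emph{almost minimizing in small annuli} (a.m.\ in the Colding--De Lellis sense, adapted to half-annuli at boundary points): this is the combinatorial heart of the argument, showing that if no min-max sequence were almost minimizing one could deform the sweepout to strictly decrease its max area, contradicting the definition of the width. Having $V$ a.m.\ in annuli, the regularity theory for free boundary minimal surfaces applies: in the interior one invokes the Schoen--Simon--Colding--De Lellis regularity to conclude $\operatorname{supp}V$ is a smooth embedded minimal surface with integer multiplicities, and near $\partial M$ one uses the boundary regularity theory of Grüter--Jost~\cite{GrJo1} and Li~\cite{Li_m}, which crucially needs the convexity hypothesis $\ii_{\partial M}>0$ to rule out boundary points where the surface could touch $\partial M$ tangentially or accumulate; convexity forces the free boundary condition (orthogonal intersection) and prevents ``false boundary'' behavior. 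This yields $\operatorname{supp}V=\bigcup_{i=1}^N\Gamma_i$ with each $\Gamma_i$ a smooth, compact, embedded free boundary minimal surface, and $V=\sum_i n_i\,\vbf(\Gamma_i)$ for positive integers $n_i$, whence $W(\Lambda,g)=\|V\|(M)=\sum_i n_i|\Gamma_i|$.

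The main obstacle is the boundary regularity step: interior regularity for min-max minimal surfaces is by now classical, but controlling the behavior of the a.m.\ varifold where it meets $\partial M$ — ensuring the touching set is where it should be, that no sheets accumulate along $\partial M$, and that the limit meets $\partial M$ orthogonally with no boundary branch points — is delicate and is exactly where the convexity assumption $\ii_{\partial M}>0$ enters decisively (it provides the maximum-principle barrier that confines nearby surfaces and forces the orthogonality). Since all of this has been carried out in detail by Li~\cite{Li_m} (building on Grüter--Jost~\cite{GrJo1}, Jost~\cite{Jos2}, and Colding--De Lellis~\cite{CoDeL}), I would not reprove it here but rather cite those sources, remarking only on the points where the saturated class $\Lambda$ and the uniform bound on $|P|$ are needed. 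Thus the statement follows by assembling the pull-tight, the almost-minimizing upgrade, and the interior-plus-boundary regularity, all in the established free boundary min-max framework.
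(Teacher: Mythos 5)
Your proposal follows essentially the same route as the paper: pull-tight to get a freely stationary varifold limit, extraction of a min-max sequence that is $1/j$-almost minimizing in small annuli, and then interior regularity via Colding--De Lellis together with boundary regularity via Jost and Li, with the convexity of $\partial M$ entering exactly as you describe. The only slight imprecision is that almost minimizing in annuli is a property of the surfaces $\Sigma^j$ in the min-max sequence rather than of the limit varifold $V$ itself, but this does not affect the argument.
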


Let us briefly explain the proof of the above result. Let $\{\Sigma_t^n\}$ be a \textit{minimizing sequence} in $\Lambda$: $\lim_n\boF(\{\Sigma_t^n\})=W(\Lambda,g)$. If $(t_n)$ is a sequence such that $\lim_n\boH^2(\Sigma_{t_n}^n)=W(\Lambda,g)$, we say that $(\Sigma_{t_n}^{n})$ is a \textit{min-max sequence}. The basic idea of the proof consists in showing that the varifold limit of a min-max sequence takes the form $\sum_{i=1}^Nn_i\vbf(\Gamma_i)$ for some free boundary minimal surfaces $\Gamma_i$.

The first step of the proof, the pull-tight procedure \cite[Proposition~5.1]{Li_m}, allows to assume that the minimizing sequence $\{\Sigma_t^n\}$ is such that any varifold limit of a min-max sequence $(\Sigma_{t_n}^{n})$ is a freely stationary varifold.

The second step of the proof consists in selecting a min-max sequence $\Sigma^j=\Sigma_{t_j}^{n_j}$ such that $\Sigma^j$ is $1/j$-almost minimizing in any small annuli (see Definition~3.2 in \cite{CoDeL} or Definition~3.6 in \cite{Li_m} replacing $\mathfrak{Is}_{\out}$ by $\mathfrak{Is}_{\tan}$) and such that, in any small annuli, $\Sigma^j$ is smooth when $j$ is large. The proof of this can be found in \cite[Proposition~5.1]{CoDeL} or \cite[Proposition~6.4]{Li_m}.

The last step is to prove the regularity of the support of the varifold limit $V=\lim\vbf(\Sigma^j)$. For the points in the interior of $M$, one can take a look at Sections 6 and 7 of \cite{CoDeL}, and for those on the boundary of $M$, at the work of Jost~\cite{Jos2}.

\subsection{The constrained case}\label{subsec:constrained}

Now, let us consider the case where $M$ is a compact Riemannian $3$-manifold with piecewise smooth boundary whose boundary is the union $\partial M=\partial_0M\cup \partial_+M$ of two smooth pieces: $\partial_0M$ and $\partial_+M$, each one being a union of closures of connected components of $\partial M\setminus\boC(M)$, such that $\partial_0M\cap\partial_+M=\boC(M)$. Our aim is to allow constructions similar to those in the unconstrained case that prevent the free boundary to intersect $\partial_0M$.

In this situation, we consider $I=[0,1]$ and a family $\{\Sigma_t\}_{t\in I}$ of closed subsets of $M$ is a \textit{constrained generalized smooth family of surfaces}, or a \textit{constrained sweepout}, if items $(a)$, $(b)$, $(e)$ and $(f)$ are satisfied and the others are replaced by
\begin{itemize}
\item[$(c')$] For every $t\in I\setminus(T\cup\{0\})$, $\Sigma_t$ is a compact surface in $M\setminus\p_0M$ with (possibly empty) boundary satisfying $\partial\Sigma_t=\Sigma_t\cap\partial_+M$ and this intersection is transverse;
\item[$(d')$] For $t\in T$, $\Sigma_t\subset M\setminus\partial_0M$ and $\Sigma_t\setminus P$ is a surface in $M\setminus P$ with boundary such that $\partial(\Sigma_t\setminus P)=(\Sigma_t\setminus P)\cap\partial_+M$ and this intersection is transverse. Besides, $\Sigma_0=\p_0M$.
\end{itemize}

In this context, $\Diff_0$ will denote the set of diffeomorphisms of $M$ that are isotopic to the identity and equal to the identity in a neighborhood of $\partial_0M$. We can then consider saturated sets $\Lambda$ of generalized smooth families of surfaces as in the unconstrained case.

If $g$ is the Riemannian metric on $M$, we can then define 
$\mathcal{F}(\sigma)$ for $\sigma\in\Lambda$ and $W(\Lambda,g)$ as in the 
unconstrained case. 

\begin{defn}\label{def:freemcf}
We say that $M$ has a \emph{free boundary mean-convex foliation near} 
$\partial_0M$ if 
there exists a non-negative continuous function $f$ defined on an open 
neighborhood of 
$\partial_0M$ such that $\partial_0M=f^{-1}(0)$ and, outside $\boC(M)$, $f$ is 
a smooth submersion such that $\partial_\nu f=0$ along $\partial_+M\setminus 
\boC(M)$ and 
$f^{-1}(t)$ has positive mean curvature in the 
$\nabla f$ direction for small $t$.
\end{defn}
Notice that the property $\partial_\nu f=0$ ensures that $f^{-1}(t)$ is a free 
boundary hypersurface. In Appendix~\ref{sec:foliat}, we prove that we can 
construct such a foliation under some reasonable assumptions on the geometry of 
$M$.

\begin{thm}\label{th:constraint_min_max}
Assume that $M$ is endowed with a Riemannian metric $g$ such that 
$\ii_{\partial_+M}$ is positive definite and $(M,g)$ has a free boundary 
mean-convex foliation near $\partial_0M$. Let $\Lambda$ be a saturated set of 
generalized smooth families of surfaces satisfying $W(\Lambda,g)>|\partial_0 
M|$. Then there exist a collection $\{\Gamma_i\}_{1\le i\le N}$ of minimal 
surfaces in $M$ with free boundary in $\partial_+M\setminus\partial_0M$ and a 
set $\{n_i\}_{1\le i\le N}$ of positive integers such that
\[
W(\Lambda,g)=\sum_{i=1}^N n_i|\Gamma_i|.
\]
\end{thm}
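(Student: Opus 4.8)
The plan is to run the same three-step min-max scheme (pull-tight, almost-minimizing min-max sequence, regularity) used for Theorem~\ref{th:min_max}, but carried out in the complement of a neighborhood of $\partial_0M$, using the mean-convex foliation as a barrier to keep the limit varifold away from $\partial_0M$. First I would fix a minimizing sequence $\{\Sigma_t^n\}$ in $\Lambda$ and a min-max sequence $(\Sigma_{t_n}^n)$, and apply the pull-tight procedure of \cite[Proposition~5.1]{Li_m}, adapted so that all deformations are supported away from $\partial_0M$ (this is consistent with the definition of $\Diff_0$ in the constrained setting). The new point compared to the unconstrained case is to verify that the resulting varifold limit $V$ of any min-max sequence has $\|V\|(\mathcal{U})=0$ for a neighborhood $\mathcal{U}$ of $\partial_0M$: since $W(\Lambda,g)>|\partial_0M|$, an $n$-large $\Sigma_t^n$ cannot concentrate all its mass near $\partial_0M$, and the mean-convex foliation $\{f^{-1}(t)\}$ forces that a freely stationary varifold (with free boundary in $\partial_+M$) which touches the region foliated by the $f^{-1}(t)$ must be, at its first point of contact with a leaf, tangent to that leaf — contradicting the strict convexity of the leaf in the $\nabla f$ direction unless $V$ has no mass there. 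This is the usual maximum-principle-type argument and it is exactly where the hypothesis $\partial_\nu f=0$ along $\partial_+M$ and the free boundary condition combine to make the barrier argument work up to the boundary.

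Next I would select, following \cite[Proposition~5.1]{CoDeL} or \cite[Proposition~6.4]{Li_m}, a min-max sequence $\Sigma^j=\Sigma_{t_j}^{n_j}$ that is $1/j$-almost minimizing in small annuli (with the tangential isotopy class $\mathfrak{Is}_{\tan}$, as in the unconstrained case) and smooth in small annuli for $j$ large; the almost-minimizing property is unaffected by the constraint once we know $V$ avoids $\partial_0M$, since near $\partial_0M$ there is simply no mass to worry about, and on the rest of $M$ we are in the situation of a manifold with smooth boundary $\partial_+M$ with $\ii_{\partial_+M}$ positive definite. Finally I would invoke the interior regularity of \cite{CoDeL} (Sections 6--7) and the free boundary regularity of Jost~\cite{Jos2} on the part of $M$ away from $\partial_0M$ to conclude that $V=\sum_{i=1}^N n_i\vbf(\Gamma_i)$ with each $\Gamma_i$ a smooth minimal surface whose boundary lies in $\partial_+M\setminus\partial_0M$, and $W(\Lambda,g)=\sum_i n_i|\Gamma_i|$; the support of $V$ being compactly contained in $M\setminus\partial_0M$ is what guarantees no $\Gamma_i$ can touch $\partial_0M$ or the corner $\boC(M)$.

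The main obstacle I expect is the barrier step: making rigorous that the strict mean-convexity of the foliation near $\partial_0M$, combined with the free boundary condition on $\partial_+M$, actually prevents a freely stationary min-max varifold from charging a neighborhood of $\partial_0M$, since one must handle possible contact of the varifold with a leaf $f^{-1(t)}$ precisely along $\partial_+M$ (at the corner region) where $f$ is only a submersion with $\partial_\nu f=0$ rather than being the distance function to $\partial_0M$. One expects to use a sweepout-level argument instead of a pointwise maximum principle: deform any competitor $\sigma\in\Lambda$ by pushing it off the sublevel sets $\{f\le t\}$ using the negative gradient flow of $f$ (which decreases area by mean-convexity and respects the free boundary because $\partial_\nu f=0$), so that $\mathcal{F}$ does not increase, thereby showing $W(\Lambda,g)$ is achieved by sweepouts that stay in $\{f\ge t_0\}$ for some fixed $t_0>0$ up to the maximal slice; this is the mechanism that, together with $W(\Lambda,g)>|\partial_0M|$, confines the min-max limit. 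The remaining pieces — pull-tight, almost-minimizing selection, and regularity — are then routine transcriptions of \cite{CoDeL,Li_m,Jos2} to the region $M\setminus\mathcal{U}$.
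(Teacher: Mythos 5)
Your overall route is the paper's: the new content relative to Theorem~\ref{th:min_max} is precisely the sweepout-level deformation you describe at the end (not the pointwise maximum principle you float in the first paragraph), namely deforming a minimizing sequence $\{\Sigma_t^n\}$ via a flow built from $f$ so that every slice with $\boH^2(\Sigma_t^n)\ge W(\Lambda,g)-\delta$ lies at distance at least $a$ from $\partial_0M$, after which pull-tight, the almost-minimizing selection and the regularity theory are run away from $\partial_0M$ exactly as in the unconstrained case. The hypothesis $W(\Lambda,g)>|\partial_0 M|$ enters as you say, through the choice $\delta<\tfrac12(W(\Lambda,g)-|\partial_0M|)$, which guarantees via items $(a)$, $(b)$, $(c')$, $(d')$ that near-maximal slices already keep some ($n$-dependent) distance $b$ from $\partial_0M$.

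There is, however, one step in your plan that would fail as stated: the parenthetical claim that the gradient flow of $f$ ``decreases area by mean-convexity.'' (Also note the direction: to push off $\{f\le t\}$ you must flow along $+\nabla f$, not the negative gradient.) Mean-convexity of the leaves only controls $\Div_V N$, $N=\nabla f/|\nabla f|$, when the $2$-plane $V$ is tangent to a leaf; for a transverse plane the computation in Lemma~\ref{lem:vectorfield} produces cross terms of the form $(e_2,N)^2$ times quantities involving $\nabla_NN$ and the full operator $\nabla_\cdot N$, which have no sign. The paper's fix is to flow along the weighted field $X=\phi(f)\alpha(f)N$ with $\phi'+c\phi\le 0$ for $c$ large, so that the strictly negative $\phi'(f)(e_2,N)^2(\nabla f,N)$ term absorbs the bad terms and $\Div_VX\le0$ for \emph{every} $2$-plane in $\{f\ge\mu_1/3\}$; without such a weight the flow of $\nabla f$ can increase the area of surfaces transverse to the foliation. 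A second, smaller, point you omit is how to keep the deformed family inside $\Lambda$: since every constrained sweepout must start at $\Sigma_0=\partial_0M$, one cannot push the whole sweepout into $\{f\ge t_0\}$; the paper composes the flow with a cutoff $T(\theta(t))$ depending on $\inf_{\Sigma_t^n}f$, so that slices meeting $\{f\le\mu_1/3\}$ are left untouched while slices contained in $\{f\ge\mu_1\}$ receive the full time-one flow, and this must be done separately for each $n$ since the constant $b$ (hence the vector field) depends on $n$.
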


\begin{exam}\label{ex:lambda+}
In this paper, we will consider $M$ the unit half-ball $B^+=B\cap\{z\ge0\}$ with $\partial_0B^+=B\cap\{z=0\}$ and $\partial_+B^+=\partial B\cap\{z\ge0\}$. One constrained generalized smooth family of surfaces is $\{\Gamma_t^+\}$ given by
\[
\Gamma_t^+=B^+\cap\{z=t\},\text{ for }t\in[0,1].
\]
We denote by $\Lambda_0^+$ the smallest saturated set of constrained sweepouts containing $\{\Gamma_t^+\}$.
\end{exam}

The proof of Theorem~\ref{th:constraint_min_max} is based on similar ideas to those presented in the proof of \cite[Theorem~2.1]{MaNe3} by Marques and Neves.

In order to prove the result, we first construct a minimizing sequence $\{\Sigma_t^n\}$ satisfying: there are $a,\delta>0$ such that
\begin{equation}\label{eq:const_minmax}
\forall n,t,\quad\boH^2(\Sigma_t^n)\ge W(\Lambda,g)-\delta\implies d(\Sigma_t^n,\partial_0M)\ge a.
\end{equation}

To do so, let $f$ be a function given by Definition~\ref{def:freemcf} 
and define $S_t=f^{-1}(t)$. If $f$ is defined on the open neighborhood $\Ome$, 
there is $\eps>0$ such that $f> 2\eps$ on $\partial\Ome\setminus\partial M$. We 
can change the definition of $f$ outside $\Ome'=\{f<2\eps\}$ to extend $f$ 
to a non-negative function on $M$ with $f\ge 2\eps$ outside $\Ome'$.
Then fix 
$0<a<\frac12d(S_\eps,\p_0M)$ and 
$0<\delta<\frac12(W(\Lambda,g)-|\partial_0M|)$. We will prove that such $a$ and 
$\delta$ are convenient.

Let us consider a minimizing sequence $\{\Sigma_t^n\}$ in $\Lambda$. Fix $n$. 
Because of items $(a)$, $(b)$, $(c')$, and $(d')$, there is $b>0$ such that
\[
\boH^2(\Sigma_t^n)\ge W(\Lambda,g)-\delta \implies d(\Sigma_t^n,\partial_0M)\ge b.
\]
We may assume that $b<a$; if not, we do nothing. We have the following lemma:

\begin{lem}\label{lem:vectorfield}
With the above notations, if $0<\mu_1<\mu_2<\eps$, there is a vector field $X$ on $M$ which vanishes near $\partial_0M$, is tangent to $\partial_+M$, and whose associated flow $(\Phi_s)$ satisfies the following:
\begin{itemize}
\item for any $t\in[0,1]$, if $\{\Sigma_t^n\}\subset\{f\ge\mu_1/3\}$, $\boH^2(\Phi_s(\Sigma_t^n))\le\boH^2(\Sigma_t^n)$ for each $s\ge 0$;
\item for any $p\in\{f\ge\mu_1\}$, $\Phi_1(p)\in \{f\ge\mu_2\}$.
\end{itemize}
\end{lem}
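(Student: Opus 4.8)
The plan is to use the mean-convex foliation $\{S_t\}$ to push the pieces of $\Sigma_t^n$ that are close to $\partial_0M$ out towards the region $\{f\ge\mu_2\}$, while keeping area under control. First I would fix smooth cutoff functions: pick $\chi:\R\to[0,1]$ with $\chi\equiv 0$ on $(-\infty,\mu_1/3]$ and $\chi\equiv 1$ on $[\mu_1/2,\infty)$ (so that the vector field we build vanishes near $\partial_0M=\{f=0\}$), and another cutoff $\psi$ supported in $\{f<\eps\}$ with $\psi\equiv 1$ on $\{f\le\mu_2\}$. The candidate vector field is $X=\chi(f)\,\psi(f)\,\nabla f/\|\nabla f\|^2$ (suitably interpreted where $\nabla f$ could degenerate — but on $\{f\le\eps\}\subset\Ome'$ the submersion property of $f$ guarantees $\nabla f\ne 0$, so this is well-defined and smooth there, and it is extended by $0$ elsewhere). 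Along $\partial_+M\setminus\boC(M)$ we have $\partial_\nu f=0$, i.e. $\nabla f$ is tangent to $\partial_+M$, so $X$ is tangent to $\partial_+M$ and its flow $(\Phi_s)$ preserves $M$; it also fixes a neighborhood of $\partial_0M$ pointwise. Note $X\cdot f=\chi(f)\psi(f)\ge 0$, with $X\cdot f=1$ on $\{\mu_1/2\le f\le\mu_2\}$, which will give the second bullet: starting from $\{f\ge\mu_1\}$, within time $1$ the value of $f$ increases past $\mu_2$ — here one should be slightly careful and instead arrange $X\cdot f\ge 1$ on the whole band $[\mu_1,\mu_2]$ by rescaling $X$ by a constant depending on $\mu_1,\mu_2,\eps$, after which $\frac{d}{ds}f(\Phi_s(p))\ge 1$ as long as $f(\Phi_s(p))\le\mu_2$, so $f(\Phi_1(p))\ge\mu_2$.

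The heart of the matter is the area-monotonicity in the first bullet: for a surface $\Sigma$ with $\Sigma\subset\{f\ge\mu_1/3\}$, we want $\frac{d}{ds}\boH^2(\Phi_s(\Sigma))\le 0$. By the first variation formula (as recalled in Subsection~\ref{subsec:definitions}), differentiating at a generic $s$ gives
\[
\frac{d}{ds}\boH^2(\Phi_s(\Sigma))=\int_{\Phi_s(\Sigma)}\Div_{\Phi_s(\Sigma)}X\,d\boH^2,
\]
the boundary term vanishing because $X$ is tangent to $\partial_+M$ (so $\Phi_s(\Sigma)$ remains a free boundary surface in $\partial_+M$ and the conormal boundary integrand $(X,\nu)$ is part of the tangential divergence accounting — more precisely one uses $\int_{\partial\Sigma}(X,\nu)$ together with $\int_\Sigma\Div_\Sigma X=\int_\Sigma(X,\vec H)+\int_{\partial\Sigma}(X,\nu)$ and tangency kills $(X,\nu)$... actually the cleanest statement is simply that the first variation of area along a vector field tangent to the (possibly-with-boundary) constraint is $\int_\Sigma \Div_\Sigma X$). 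So it suffices to show $\Div_\Sigma X\le 0$ pointwise, for \emph{every} surface $\Sigma$ through each point of $\{f\ge\mu_1/3\}$ — equivalently, the symmetric part of $\bar\nabla X$, restricted to any $2$-plane, has non-positive trace where we only care about $2$-planes, but in fact it is cleaner (and standard, cf. Marques–Neves) to prove the stronger statement $\Div_\Sigma X\le 0$ for all $2$-planes by using the level-set structure: on the region where $\chi(f)\psi(f)$ is locally constant equal to $c$ (which includes a neighborhood of $\{f\ge\mu_1/2\}\cap\{f\le\mu_2\}$), $X=c\,\nabla f/\|\nabla f\|^2$ and $\Div_\Sigma X = c\,\Div_\Sigma(\nabla f/\|\nabla f\|^2)$, and one computes that the tangential divergence of the ``graphical gradient flow'' vector field of the foliation equals (up to positive factors) minus the mean curvature of the level set $S_{f}$ in the $\nabla f$ direction — which is positive by Definition~\ref{def:freemcf} — plus error terms involving how $\Sigma$ sits relative to $S_f$; choosing $\eps$ small (so the foliation region is thin and the level sets have mean curvature bounded below by a fixed positive constant while the ``tilt'' terms are dominated) makes $\Div_\Sigma X\le 0$. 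On the transition region $\{\mu_1/3\le f\le\mu_1/2\}$ there is an additional term $\chi'(f)\psi(f)\|\nabla f\|^{-2}\langle\nabla f,\cdot\rangle$-type contribution from differentiating the cutoff; since $\chi'\ge 0$ can be taken and $\nabla(X\cdot f)$ points ``outward'', one checks this term is also favorable (or absorbable), which is where the hypothesis $\{\Sigma_t^n\}\subset\{f\ge\mu_1/3\}$ is genuinely used — it confines $\Sigma$ to the region where we have signed control.

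The main obstacle I expect is precisely making the divergence computation $\Div_\Sigma X\le 0$ robust across the cutoff transition and uniform over all tangent $2$-planes: near $\partial_0M$ the level sets are mean-convex by hypothesis, but the normalized gradient field $\nabla f/\|\nabla f\|^2$ can have large derivative if $\|\nabla f\|$ is small, and the cutoff $\chi(f)$ introduces a term that must not overwhelm the good mean-curvature term. The resolution is to choose the parameters in the right order: first $\eps$ small enough that on $\{f\le 2\eps\}$ the level sets $S_t$ have mean curvature $\ge c_0>0$ in the $\nabla f$ direction and $\|\nabla f\|$ is bounded above and below by positive constants (possible by compactness after shrinking $\Ome'$); then choose the cutoff $\chi$ with $\chi'$ small enough (supported in the thin band $[\mu_1/3,\mu_1/2]$ where we have room) that its contribution to $\Div_\Sigma X$ is dominated by $-c_0\chi(f)$ plus the already-negative curvature term; finally rescale $X$ by the constant needed for the second bullet, which only helps the first bullet (scaling a non-positive quantity by a positive constant). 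One should also double-check the borderline behavior where $X$ is merely Lipschitz across $f=\mu_1/3$ — but taking $\chi$ smooth makes $X$ smooth on $\{f>0\}$ and identically $0$ near $\{f=0\}$, so the flow $(\Phi_s)$ exists for all $s\ge 0$, fixes a neighborhood of $\partial_0M$, and preserves $M$; the monotonicity and the displacement property then follow by integrating the two differential inequalities established above.
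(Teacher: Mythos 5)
Your overall strategy (a vector field of the form $\rho(f)\cdot(\text{normalized gradient of }f)$, non-positive tangential divergence on every $2$-plane over $\{f\ge\mu_1/3\}$, then rescale to get the displacement property) is the same as the paper's, but the design of the cutoff profile $\rho=\chi\psi$ is where your argument breaks, and the fix you propose cannot work. Write $X=\rho(f)W$ with $W$ proportional to $\nabla f$; for a $2$-plane $V=\Span(e_1,e_2)$ with $e_1$ tangent to the level set, one gets
\[
\Div_VX=\rho'(f)(e_2,N)^2(\nabla f,W)+\rho(f)\Div_VW,
\]
and $\rho(f)\Div_VW$ contains, besides the good term $-\rho(f)H/|\nabla f|$, tilt terms of \emph{indefinite} sign weighted by $(e_2,N)^2$ and bounded only by $K\rho(f)(e_2,N)^2$, where $K$ controls $\nabla_\cdot N$ and has no reason to be smaller than the mean-curvature lower bound $H_0$. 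The only term available to absorb these is $\rho'(f)(e_2,N)^2(\nabla f,W)$, which requires $\rho'\le -c\rho$ for a large constant $c$ (this is exactly the paper's condition $\phi'+c\phi\le0$). Your profile violates this twice. First, on the transition band $[\mu_1/3,\mu_1/2]$ you have $\chi'\ge0$, a positive contribution; you suggest taking $\chi'$ ``small enough that its contribution is dominated by $-c_0\chi(f)$'', but $\chi'\le C\chi$ with $\chi(\mu_1/3)=0$ forces $\chi\equiv0$ by Gronwall, so a cutoff rising from $0$ to $1$ can never satisfy such a domination. Second, on the plateau $\{\mu_1/2\le f\le\mu_2\}$ you have $\rho'=0$, and then for a $2$-plane with a large normal component the tilt terms are uncontrolled: mean convexity of the level sets only handles $2$-planes tangent to them.

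The paper avoids both problems by a different arrangement of the cutoffs: the \emph{increasing} cutoff $\alpha$ (needed so that $X$ vanishes near $\partial_0M$) is supported entirely in $\{f<\mu_1/3\}$ and is identically $1$ on $\{f\ge\mu_1/3\}$, so it contributes nothing where the divergence estimate is required — this is precisely where the hypothesis $\Sigma_t^n\subset\{f\ge\mu_1/3\}$ is used; and the remaining profile $\phi$ is strictly exponentially decreasing ($\phi'+c\phi\le0$ with $c>\frac1k(K+\frac{K^2}{2\eta})$) on all of $[0,\eps]$, with no plateau, so that $\phi'(f)(e_2,N)^2|\nabla f|$ absorbs every tilt term and $-\phi(f)H_0$ absorbs the rest. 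If you replace your $\chi,\psi$ by this structure the rest of your outline (first variation, tangency to $\partial_+M$, rescaling for the second bullet) goes through.
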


We postpone the proof of the above Lemma and finish the construction of our 
sweepout. Let $0<\mu_1 <\mu_2<\eps$ be such that $S_{\mu_1}$ is in the 
$b$-tubular neighborhood of $\partial_0M$ and $S_{\mu_2}$ is outside the 
$a$-tubular neighborhood of $\partial_0 M$. Let $X$ and $(\Phi_s)$ be given by 
the above Lemma. Because of item~$(b)$, the function 
$t\mapsto\inf_{\Sigma_t^n}f$ is continuous and non-negative. Therefore, we can 
consider a smooth function $\theta:[0,1]\to [0,1]$ such that 
$\inf_{\Sigma_t^n}f\le\theta(t)\le\inf_{\Sigma_t^n}f+\mu_1/3$. Let 
$T:[0,1]\to[0,1]$ be a smooth function satisfying $T(x)=0$ if $x\le 2\mu_1/3$ 
and $T(x)=1$ if $x\ge\mu_1$. Let us then consider the generalized smooth family 
of surfaces $\{\Phi_{T(\theta(t))}(\Sigma_t^n)\}$. It belongs to $\Lambda$. Let 
$t\in[0,1]$. If $\Sigma_t^n\cap\{f\le \mu_1/3\}\neq\emptyset$, 
$\theta(t)\le2\mu_1/3$, and then $\Phi_{T(\theta(t))}(\Sigma_t^n)=\Sigma_t^n$. 
If $\Sigma_t^n\cap \{f\le \mu_1/3\}= \emptyset$, then 
$\boH^2(\Phi_{T(\theta(t))}(\Sigma_t^n))\le\boH^2(\Sigma_t^n)$. Thus we see 
that $\boF(\{\Phi_{T(\theta(t))}(\Sigma_t^n)\})\le\boF(\{\Sigma_t^n\})$. 
Besides, if $\boH^2(\Phi_{T(\theta(t))}(\Sigma_t^n))\ge W(\Lambda,g)-\delta$, 
then $\boH^2(\Sigma_t^n)\ge W(\Lambda,g)-\delta$. In particular, 
$\Sigma_t^n\subset\{f\ge\mu_1\}$, $\theta(t)\ge\mu_1$, and then 
$\Phi_{T(\theta(t))}(\Sigma_t^n)=\Phi_1(\Sigma_t^n)\subset\{f\ge\mu_2\}$. Thus 
$d(\Phi_{T(\theta(t))}(\Sigma_t^n),\partial_0M)\ge a$. Therefore 
$\{\Phi_{T(\theta(t))}(\Sigma_t^n)\}$ is a minimizing sequence which satisfies 
\eqref{eq:const_minmax} (notice that $X$ depends on $b$ and thus on $n$).

\begin{proof}[Proof of Lemma~\ref{lem:vectorfield}]
Let $c>0$ be chosen latter. Let $\phi$ be a smooth non-negative function 
defined 
on $\R_+$ which is positive on $[0,\mu_2]$, vanishes on a neighborhood 
$[\eps,+\infty)$, and satisfies $\phi'+c\phi\le 0$. Let $\alpha$ be a smooth 
non-negative function on $\R_+$ vanishing near $0$ and such that $\alpha=1$ on 
$[\mu_1/3,+\infty]$. Let $X$ be the vector field defined by 
$X=\phi(f)\alpha(f)N$, where $N=\frac{\nabla f}{|\nabla f|}$ (extended 
by zero 
outside of 
$\Ome$). We notice 
that this vector field vanishes close to $\partial_0M$ and is tangent to 
$\partial_+ M$. 

Let us fix a $2$-plane $V=\Span(e_1,e_2)$ in $T_p M$ where $p\in\{f\ge\mu_1/3\}$ and compute $\Div_VX$. We may assume that $p\in\{f<\eps\}$. First we notice that $\alpha(f)=1$ at $p$ and we may assume $e_1\in T_pS_{f(p)}$. Let $g\in T_pS_{f(p)}$ be such that $(e_1,g,N)$ is an orthonormal basis of $T_pM$. Then 
\begin{align*}
\Div_VX&=(\nabla_{e_1}\phi(f)N,e_1)+(\nabla_{e_2}\phi(f)N,e_2)\\
&=\phi(f)(\nabla_{e_1} N,e_1)+\phi'(f)(e_2,\nabla f)(N,e_2)+\phi(f)(\nabla_{e_2}N,e_2).
\end{align*}
Since $e_2=(e_2,g)g+(e_2,N)N$, we have
\begin{align*}
(\nabla_{e_2} N,e_2)&=(e_2,g)(e_2,N)(\nabla_N N,g)+(e_2,g)^2(\nabla_gN,g)\\
&=(e_2,g)(e_2,N)(\nabla_N N,g)+(1-(e_2,N)^2)(\nabla_g N,g).
\end{align*}
So, if $H_0>0$ is a lower bound on the mean curvature of $S_t$ and  
keeping in mind that $\phi$ is non-negative and $\phi'\le-c\phi\le0$, we have
\begin{align*}
\Div_VX&=-\phi(f)H_{S_{f(p)}}(p)+\phi'(f)(e_2,N)^2(\nabla f,N)\\
&\qquad+\phi(f)\left((e_2,g)(e_2,N)(\nabla_N N,g)-(e_2,N)^2(\nabla_gN,g)\right)\\
&\le-\phi(f)H_0+\phi'(f)(e_2,N)^2 
k+\frac{\eps}{2}\phi(f)+\frac1{2\eps}(e_2,N)^2 
K^2\phi(f)+(e_2,N)^2 K\phi(f)\\
&\le\phi(f)\left(\frac{\eta}{2}-H_0\right)+(e_2,N)^2\left(k\phi'(f)+ 
(K+\frac{K^2}{2\eta})\phi(f)\right)
\end{align*}
for any $\eta>0$, where $k>0$ is a lower bound on $(\nabla 
f,N)$ and $K$ is an upper bound on the norm of the operator $\nabla_\cdot N$. 
So choosing $\eta<H_0$ and 
$c>\frac1k(K+\frac{K^2}{2\eta})$ ensures that $\Div_VX\le 0$. 
Notice that, along the flow associated to $X$, the function $f$ is increasing 
and $X$ does not vanish on $\{\mu_1/3\le f\le \mu_2\}$. Thus, multiplying $X$ 
by some positive constant, we can ensure the second item of the Lemma. Finally, 
since $\Div_V X\le 0$ for any $2$-plane $V$ on $\{f\ge\mu_1/3\}$, the first 
item is true by the first variation of area formula.
\end{proof}

Once the construction of a minimizing sequence $\{\Sigma_t^n\}$ satisfying 
\eqref{eq:const_minmax} is done, the rest of the proof is similar to the proof 
of \cite[Theorem~2.1]{MaNe3} by Marques and Neves. The pull-tight procedure and 
the selection of a sequence that is almost-minimizing in any small annuli can 
be done away from $\partial_0M$. This ensures the existence of a min-max 
sequence $\Sigma_{t_{j}}^{n_{j}}$ that stays away from $\partial_0M$ and then 
the regularity of its varifold limit. 

\section{\textcolor{black}{Topological control}}\label{sec:topologicalcontrol}

In this section, we explain the topological control we have on the free 
boundary minimal surfaces produced by the above constructions. The result is a 
consequence of the work of De~Lellis and Pellandini~\cite{DeLPe} (see 
also the recent result in \cite{FrSc}).

We denote by $\boO$ (resp. $\boO_\partial$) the set of compact orientable 
connected surfaces without boundary (resp. with nonempty boundary). Let $\boN$ 
be the set of compact nonorientable connected surfaces with (possibly empty) 
boundary. An element $\Gamma$ of $\boO$ or $\boO_\partial$ is homeomorphic to 
the connected sum of the sphere $\S^2$ and $\gbf(\Gamma)$ tori $\T^2$ to which 
$\kbf(\Gamma)$ disks are removed. An element $\Gamma$ of $\boN$ is homeomorphic 
to the connected sum of $\gbf(\Gamma)$ projective planes $ P^2$ to which 
$\kbf(\Gamma)$ disks are removed (see~\cite{GaXu}). $\gbf(\Gamma)$ is called 
the \textit{genus} of the surface of $\Gamma$.

For a surface $\Sigma$, whose connected components are the $\Gamma_i$, we define
\[
\tbf(\Sigma)=\sum_{\Gamma_i\in 
\boO}2\gbf(\Gamma_i)+\sum_{\Gamma_i\in\boO_\partial}\big(2\gbf(\Gamma_i)+ 
\kbf(\Gamma_i)-1\big)+\sum_{\Gamma_i\in\boN}\big(\gbf(\Gamma_i)+\kbf(\Gamma_i)-1\big).
\]

From the above constructions, we recall that at the second step we obtain a min-max sequence $(\Sigma_{t_j}^j)$ such that $\Sigma_{t_j}^j$ is $1/j$-almost minimizing in any small annuli and, in any small annuli, $\Sigma_{t_j}^j$ is smooth. Moreover, as varifolds, $\vbf(\Sigma_{t_j}^j)\to\sum_{i=1}^Nn_i\vbf(\Gamma_i)$, where the $\Gamma_i$ are free boundary minimal surfaces.

We can now state our control on the topology (compare 
with~\cite[Theorem~1.6]{DeLPe}).

\begin{thm}\label{th:top}
Assume that any surface $\Sigma_t$ appearing in $\Lambda$ is orientable for $t\in I\setminus T$. Let $(\Sigma_{t_j}^j)$ be a min-max sequence as above. Then 
\[
\tbf(\bigcup_{i=1}^n\Gamma_i)\le\liminf_{j\to\infty}\liminf_{t\to t_j}\tbf(\Sigma_{t}^j).
\]
\end{thm}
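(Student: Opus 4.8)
The plan is to follow the strategy of De~Lellis and Pellandini~\cite{DeLPe}, adapted to the free boundary setting. The key object is the genus-counting function $\tbf$, which (up to the conventions for boundary components and non-orientable pieces) measures the first Betti number of a surface; the crucial feature is that $\tbf$ is \emph{additive under disjoint union} and \emph{lower semicontinuous under the degenerations that occur in min-max convergence}. Concretely, the argument splits the ambient manifold $M$ near the support of the limit varifold $V=\sum_i n_i\vbf(\Gamma_i)$ into two regions: a union of small (half-)balls $\boB$ around finitely many ``bad'' points where $\Sigma_t^j$ may fail to be smooth or may degenerate, and the complementary region where $\Sigma_{t_j}^j\to\bigcup_i\Gamma_i$ smoothly with multiplicity. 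Outside the bad balls, smooth graphical convergence with multiplicity $n_i$ over each $\Gamma_i$ shows that for $j$ large the part of $\Sigma_{t_j}^j$ there is, topologically, roughly $n_i$ parallel copies of $\Gamma_i\setminus\boB$, so its contribution to $\tbf$ is at least $\tbf(\bigcup_i\Gamma_i\setminus\boB)$, which converges to $\tbf(\bigcup_i\Gamma_i)$ as the balls shrink.

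First I would set up the selection of bad points and the good annular/ball decomposition: since $\Sigma_{t_j}^j$ is $1/j$-almost minimizing in small annuli and smooth there, the usual covering argument (as in \cite[Section~2]{DeLPe} and Section~7 of \cite{CoDeL}) produces a finite set $\{p_1,\dots,p_K\}$ such that on $M\setminus\bigcup_\ell B_{r}(p_\ell)$ the convergence $\Sigma_{t_j}^j\to\bigcup_i\Gamma_i$ is smooth and multi-sheeted. For boundary points one uses instead half-balls centred on $\partial M$, with the free boundary condition ensuring that the picture in a half-ball is the doubling of the interior picture; this is exactly where the free boundary theory of Li~\cite{Li_m} replaces the closed theory of \cite{CoDeL}. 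Second, I would invoke the key topological lemma from \cite{DeLPe}: if $\Sigma^j\to\Gamma$ smoothly and with multiplicity away from finitely many points, and each $\Sigma^j$ is almost minimizing in the small annuli around those points, then no topology is ``created in the limit'', i.e. $\tbf(\Gamma\cap(M\setminus\boB))\le\liminf_j\tbf(\Sigma^j\cap(M\setminus\boB))$. The orientability hypothesis on the $\Sigma_t$ is used here to guarantee that the genus bookkeeping (and in particular the $\kbf-1$ corrections for boundary components) behaves additively when one cuts along the boundaries of the small balls.

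Third, I would account for the genus lost by cutting: passing from $\Sigma_{t_j}^j$ to $\Sigma_{t_j}^j\cap(M\setminus\boB)$ can only decrease $\tbf$ — capping off boundary circles or removing handles never increases the first Betti number — so $\tbf(\Sigma_{t_j}^j\cap(M\setminus\boB))\le\tbf(\Sigma_{t_j}^j)$, and the same comparison (using $t\to t_j$ and item $(f)$ of the sweepout definition, so that $\Sigma_t^j$ is smooth and close to $\Sigma_{t_j}^j$ in $M\setminus P$ for $t$ near $t_j$) gives the bound in terms of $\liminf_{t\to t_j}\tbf(\Sigma_t^j)$. Combining the three estimates and then letting the radius $r$ of the bad balls tend to $0$ — using that $\tbf(\bigcup_i\Gamma_i\cap(M\setminus\boB))\to\tbf(\bigcup_i\Gamma_i)$ since removing finitely many small disks from a smooth surface changes $\tbf$ by a controlled amount that vanishes once the disks are small enough to be topologically trivial — yields the claimed inequality.

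The main obstacle is the second step: proving that almost-minimizing in small annuli prevents the creation of genus in the limit, in the free boundary setting. In the closed case this is the technical heart of \cite{DeLPe} and relies on a delicate analysis of how the sheets of $\Sigma^j$ can be connected through a neck region — one must rule out that many thin necks (which carry negligible area and hence survive the almost-minimizing property) glue the sheets into a high-genus surface. Near the boundary one has the extra subtlety that necks may sit against $\partial M$, and one must check that the reflection/doubling trick is compatible with the free boundary condition and with the $\mathfrak{Is}_{\tan}$ version of almost-minimizing used here; granting the boundary regularity and replacement results already cited from \cite{Li_m,Jos2}, this reduces to the interior statement applied to the doubled configuration. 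I would therefore present this step by carefully reducing to \cite[Theorem~1.6]{DeLPe} via doubling, and refer to \cite{FrSc} for an alternative treatment, rather than redoing the neck analysis from scratch.
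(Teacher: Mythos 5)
Your proposal rests on two steps that do not hold as stated, and it misses the two ideas the paper actually uses. First, the bookkeeping for $\tbf$ is wrong: $\tbf$ is \emph{not} monotone under intersecting with the complement of small balls, nor does $\tbf(\Gamma\cap(M\setminus\boB))$ tend to $\tbf(\Gamma)$ as the balls shrink. Removing $k$ disjoint small disks from a closed orientable genus-$\gbf$ component turns $2\gbf$ into $2\gbf+k-1$, no matter how small the disks are; so both your inequality $\tbf(\Sigma_{t_j}^j\cap(M\setminus\boB))\le\tbf(\Sigma_{t_j}^j)$ and your limit $\tbf(\bigcup_i\Gamma_i\cap(M\setminus\boB))\to\tbf(\bigcup_i\Gamma_i)$ fail. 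What is true, and what the proof needs, is that $\tbf$ does not increase under the precise surgery of \emph{cutting away a neck} (an annulus replaced by two disks, or --- new in the boundary setting --- an open strip replaced by two half-disks); this requires the case-by-case check (separating versus non-separating neck, strip joining one or two boundary components) carried out before the theorem. Second, your starting point --- that away from finitely many bad points the min-max sequence converges smoothly and graphically with multiplicity $n_i$ over $\Gamma_i$ --- is not available: $\Sigma_{t_j}^j$ converges only as varifolds, and almost-minimizing in annuli yields regularity of the \emph{limit}, not curvature estimates for $\Sigma_{t_j}^j$ itself. The surfaces $\Sigma_{t_j}^j$ may carry many thin necks invisible in the varifold limit, which is exactly why the genus bound is delicate.

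The paper's route is different on both counts and avoids your boundary doubling, which is itself problematic (the approximating surfaces need not meet $\partial M$ orthogonally, so the reflected surface is not smooth, and the $\mathfrak{Is}_{\tan}$ almost-minimizing property does not obviously pass to the double). One chooses, via Lemma~\ref{lem:hom}, $m_i=\tbf(\Gamma_i)$ homologically independent curves in each $\Gamma_i$ lying at distance $\mu>0$ from $\partial M$; the coarea-formula surgery proposition (the free boundary analogue of \cite[Proposition~2.3]{DeLPe}, where short slice curves bound either small disks or small half-disks in the level sets $\Delta_\sigma$) produces $\widetilde\Sigma_t^j$ obtained from $\Sigma_t^j$ by surgeries, confined to $V_{2\eps}(\Gamma)$ and agreeing with $\Sigma_t^j$ on $V_\eps(\Gamma)$; Simon's Lifting Lemma --- a purely interior statement, applicable because the chosen curves avoid $\partial M$ --- lifts each $\gamma_i^l$ to a closed curve in $\Sigma_{t_j}^j$ inside a small tubular neighborhood, and the tubular neighborhood retraction shows the lifts stay independent in $H_1(\widetilde\Sigma_t^j)$, giving $\tbf(\Gamma)\le\tbf(\widetilde\Sigma_t^j)\le\tbf(\Sigma_t^j)$, the last inequality by the surgery monotonicity. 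You would need to replace your graphical-convergence and disk-removal steps by these two ingredients for the argument to close.
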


Let us explain how one can adapt the arguments in \cite{DeLPe}. First, let us explain a type of surgery that we will use for orientable surfaces. If $\Sigma$ is a compact surface with boundary, an \textit{open strip} in $\Sigma$ is an open subset $S\subset\Sigma$ that is homeomorphic to $(0,1)\times[0,1]$. Notice that the homeomorphism sends $\partial\Sigma\cap S$ to $(0,1)\times\{0,1\}$, thus an open strip joins two sub-arcs of $\partial\Sigma$. An \textit{open half-disk} in $\Sigma$ is an open subset $D\subset\Sigma$ that is homeomorphic to 
\[
\{(x,y)\in\R^2\mid x^2+y^2<1\text{ and }y\ge0\}.
\]
Here the homeomorphism sends $\partial\Sigma\cap D$ to $(-1,1)\times\{0\}$.

\begin{defn}
Let $\Sigma$ and $\Sigma'$ be two compact oriented surfaces. We say that \textit{$\Sigma'$ is obtained from~$\Sigma$ by cutting away a neck} if 
\begin{itemize}
\item either $\Sigma\setminus\Sigma'$ is homeomorphic to $\S^1\times (0,1)$ and $\Sigma'\setminus\Sigma$ is homeomorphic to the disjoint union of two open disks,
\item or $\Sigma\setminus\Sigma'$ is an open strip in $\Sigma$ and $\Sigma'\setminus\Sigma$ is the disjoint union of two open half-disks in~$\Sigma'$.
\end{itemize}
We say that \textit{$\widetilde\Sigma$ is obtained from $\Sigma$ through surgery} if there is a finite number of surfaces
\[
\Sigma_0=\Sigma,\Sigma_1,\dots,\Sigma_{N}=\widetilde\Sigma
\]
such that, for any $0\le k\le N-1$,
\begin{itemize}
\item either $\Sigma_{k+1}$ is homeomorphic to $\Sigma_k$,
\item or $\Sigma_{k+1}$ is obtained from $\Sigma$ by cutting away a neck.
\end{itemize}
\end{defn}

Assume that $\Sigma'$ is obtained from $\Sigma$ by cutting away a neck and let 
us compare $\tbf(\Sigma)$ with~$\tbf(\Sigma')$. If we are in the first item, we 
have two possibilities. The first one is when the annulus does not separate 
$\Sigma$; then, in this case, $\Sigma'$ and $\Sigma$ have the same number of 
connected components and exactly one of them sees its genus decreased 
by $1$ so 
that $\tbf(\Sigma')=\tbf(\Sigma)-2$. If the annulus separates~$\Sigma$, then 
one connected component $\Gamma$ of $\Sigma$ gives birth to two connected 
components $\Gamma_1'$ and $\Gamma_2'$ of $\Sigma'$ with 
$\gbf(\Gamma_1')+\gbf(\Gamma_2')=\gbf(\Gamma)$ and 
$\kbf(\Gamma_1')+\kbf(\Gamma_2')=\kbf(\Gamma)$. Depending on whether 
$\kbf(\Gamma_i')=0$ or not, we have $\tbf(\Sigma')=\tbf(\Sigma)$ or 
$\tbf(\Sigma)-1$. If we are in the second item, we also have several cases to 
consider. First, the strip can join two different connected components 
of~$\partial\Sigma$. In this case, $\Sigma'$ and $\Sigma$ have the same number 
of connected components and exactly one of them sees $\kbf$ decreased 
by $1$ so 
that $\tbf(\Sigma')=\tbf(\Sigma)-1$. If the strip joins two sub-arcs of the 
same connected component of $\partial\Sigma$, then the strip may separate 
$\Sigma$ or not. In the first case, one connected component $\Gamma$ of 
$\Sigma$ gives birth to two connected components $\Gamma_1'$ and $\Gamma_2'$ of 
$\Sigma'$ with $\gbf(\Gamma_1')+\gbf(\Gamma_2')=\gbf(\Gamma)$ and 
$\kbf(\Gamma_1')+\kbf(\Gamma_2')=\kbf(\Gamma)+1$. Since $\kbf(\Gamma_i')\neq 0$ 
in this case, we have $\tbf(\Sigma')=\tbf(\Sigma)$. If the strip does not 
separate $\Sigma$, then there is a connected component of $\Sigma$ which sees 
its genus decreased by $1$ and $\kbf$ increased by $1$, so 
$\tbf(\Sigma')=\tbf(\Sigma)-1$. In any cases we see that 
$\tbf(\Sigma')\le\tbf(\Sigma)$.

Concerning the topology of compact surfaces, we have the following result (see~\cite[p. 101]{GaXu}):
\begin{lem}\label{lem:hom}
Let $\Gamma$ be a compact connected surface with genus $\gbf$ and $\kbf$ boundary connected components. Then, if $\Gamma\in\boO$, $H^1(\Gamma)=\Z^{2\gbf}$; if $\Gamma\in\boO_\partial$, $H^1(\Gamma)=\Z^{2\gbf+\kbf-1}$; and, if $\Gamma\in\boN$, $H^1(\Gamma)=\Z^{\gbf-1}\times\Z_2$ when $\kbf=0$ and 
$H^1(\Gamma)=\Z^{\gbf+\kbf-1}$ when $\kbf\neq 0$.
\end{lem}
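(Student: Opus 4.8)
The plan is to deduce the lemma from the classification theorem for compact surfaces together with an elementary homology computation, so that the only non-formal input is the list of model surfaces. Recall that a compact connected surface of genus $\gbf$ with $\kbf$ boundary components is homeomorphic to $\Sigma_{\gbf,\kbf}$, a closed orientable genus-$\gbf$ surface with $\kbf$ open disks removed, when $\Gamma\in\boO\cup\boO_\partial$, and to $N_{\gbf,\kbf}$, a connected sum of $\gbf$ copies of $P^2$ with $\kbf$ open disks removed, when $\Gamma\in\boN$; the corresponding Euler characteristics are $\chi(\Sigma_{\gbf,\kbf})=2-2\gbf-\kbf$ and $\chi(N_{\gbf,\kbf})=2-\gbf-\kbf$. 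In each case I will compute the first homology group $H_1(\Gamma)$, which is the group the statement records (for connected $\Gamma$ the free parts of $H_1$ and $H^1$ agree by the universal coefficient theorem).

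First I would treat the case $\kbf\ge1$. A compact surface with nonempty boundary deformation retracts onto a finite connected graph (for instance a spine coming from a handle decomposition with $0$- and $1$-handles only), hence is homotopy equivalent to a wedge of $1-\chi(\Gamma)$ circles; in particular $H_1(\Gamma)$ is free abelian of rank $1-\chi(\Gamma)$. Substituting the Euler characteristics above gives rank $2\gbf+\kbf-1$ when $\Gamma\in\boO_\partial$ and rank $\gbf+\kbf-1$ when $\Gamma\in\boN$ with $\kbf\neq0$, which are the asserted answers.

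For the closed case $\kbf=0$ I would use the standard CW model with one $0$-cell, $r$ edges, and one $2$-cell: here $r=2\gbf$ with the $2$-cell attached along $\prod_{i=1}^{\gbf}[a_i,b_i]$ in the orientable case, and $r=\gbf$ with attaching word $a_1^2\cdots a_\gbf^2$ in the nonorientable case. The cellular chain complex then reduces to the single boundary map $\partial_2\colon\Z\to\Z^{r}$, which vanishes in the orientable case --- giving $H_1(\Gamma)=\Z^{2\gbf}$ --- and sends the generator to $(2,\dots,2)$ in the nonorientable case; the Smith normal form $(2,0,\dots,0)$ of the latter yields $H_1(\Gamma)=\Z^{\gbf-1}\times\Z_2$. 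Assembling the four cases finishes the proof.

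No serious obstacle arises here: the classification of compact surfaces and the standard CW structures do all the work and the remainder is linear algebra over $\Z$. The only point deserving a moment's attention is the nonorientable closed surface, where one must read off the torsion factor $\Z_2$ correctly from the relator $a_1^2\cdots a_\gbf^2$ (equivalently, from the mod-$2$ reduction of the sum of the edge generators), since in every other case the homology is manifestly free.
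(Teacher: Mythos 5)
Your proof is correct and is essentially the standard argument that the paper simply outsources to the cited reference (Gallier--Xu): the classification of compact surfaces combined with the CW/spine computation of first homology. Your parenthetical about $H^1$ versus $H_1$ is well taken --- the group the lemma records in the closed nonorientable case is really $H_1$ (degree-one cohomology of a finite CW complex is torsion-free), and computing $H_1$ is exactly what is needed downstream, since the lemma is only invoked to produce a maximal family of homologically independent curves.
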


Let us start the proof of Theorem~\ref{th:top}. Let $\Gamma=\cup_{i=1}^n\Gamma_i$ be the support of $\lim \vbf(\Sigma_{t_j}^j)$ and $m_i=\tbf(\Gamma_i)$. By Lemma~\ref{lem:hom}, there are $m_i$ curves $\gamma_i^1,\dots,\gamma_i^{m_i}$ in $\Gamma_i$ such that, if $k_1,\dots,k_{m_i}$ are integers such that $k_1\gamma_i^1+\cdots+k_{m_i}\gamma_i^{m_i}$ is homologically trivial in $\Gamma_i$, then $k_l=0$ for each $l$. Notice that we can find $\mu>0$ such that $d(\gamma_i^l,\partial M)>\mu$ for any $i,l$. 

For $\eps>0$, we denote by $V_\eps(X)$ the $\eps$-tubular neighborhood of $X\subset M$:
\[
V_\eps(X)=\{p\in M\mid d(p,X)<\eps\}.
\]
We can choose $\eps_0<\mu$ such that there is a smooth tubular neighborhood retraction of $V_{2\eps_0}(\Gamma)$ onto $\Gamma$.

We have (see \cite[Proposition 2.3]{DeLPe}):
\begin{prop}
Let $\eps\le\eps_0$ be positive. For $j$ sufficiently large and $t$ close enough to $t_j$, we can find a surface $\widetilde\Sigma_t^j$ obtained from $\Sigma_t^j$ through surgery and satisfying:
\begin{itemize}
\item $\widetilde\Sigma_t^j$ is contained in $V_{2\eps} (\Gamma)$;
\item $\widetilde\Sigma_t^j\cap V_\eps(\Gamma)=\Sigma_t^j\cap V_\eps(\Gamma)$.
\end{itemize}
\end{prop}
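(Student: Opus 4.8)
The statement to be proven is the Proposition of De Lellis--Pellandini type: for $j$ large and $t$ close to $t_j$, the min-max surface $\Sigma_t^j$ can be modified through surgery (in the orientable sense defined above) to a surface $\widetilde\Sigma_t^j$ contained in $V_{2\eps}(\Gamma)$ and agreeing with $\Sigma_t^j$ inside $V_\eps(\Gamma)$. The plan is to follow the argument of \cite[Proposition~2.3]{DeLPe}, adapting it to the free boundary setting. The key input is that, by the second step of the min-max construction, $\Sigma_t^j$ is $1/j$-almost minimizing in small annuli and smooth there, and converges as varifolds to $\sum n_i\vbf(\Gamma_i)$; hence the portion of $\Sigma_t^j$ lying in the shell $V_{2\eps}(\Gamma)\setminus V_\eps(\Gamma)$, as well as the portion lying outside $V_{2\eps}(\Gamma)$, has small mass once $j$ is large and $t$ is close to $t_j$.

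First I would fix $\eps\le\eps_0$ and cover the ``bad'' region (the complement of $V_\eps(\Gamma)$, together with an overlap into the shell) by finitely many small balls --- \emph{intrinsic} balls centered at interior points and half-balls centered at boundary points, small enough that within each the almost-minimizing property applies and the relevant isoperimetric/monotonicity estimates hold. On each such ball or half-ball, since $\Sigma_t^j$ is close to a minimizing surface and has small mass there, one replaces $\Sigma_t^j$ by a minimizer of area (in the interior case) or by a minimizer with free boundary on $\partial_+M$ (in the boundary case) with the same boundary data on the sphere/hemisphere bounding the ball. By the almost-minimizing property the mass does not increase by more than a controlled amount, and by the regularity theory for area minimizers (interior regularity of Colding--De Lellis, and boundary regularity for free boundary minimizers as in Grüter--Jost and Li) the replacement is smooth. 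Since the mass is small, each replacement is in fact a union of topological disks (interior case) or half-disks (boundary case) by the monotonicity formula, which forces the connected pieces to be graphical. The transition between ``old'' and ``new'' parts across the boundary sphere/hemisphere of each ball is exactly a neck cut --- an annulus surgery in the interior and a strip surgery at the boundary --- so the resulting surface $\widetilde\Sigma_t^j$ is obtained from $\Sigma_t^j$ through surgery in the sense defined above, and it agrees with $\Sigma_t^j$ on $V_\eps(\Gamma)$ while being confined to $V_{2\eps}(\Gamma)$.

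The main technical point, and where the free boundary setting requires genuine care beyond \cite{DeLPe}, is the surgery near $\partial M$: one must ensure that the replacement half-disks meet $\partial M$ in the prescribed free boundary manner and that the resulting modification falls under the ``open strip / open half-disk'' case of the surgery definition, so that the topological bookkeeping (the estimate $\tbf(\widetilde\Sigma)\le\tbf(\Sigma)$ recorded above) still applies. This uses that $\ii_{\partial M}$ is convex, so that the boundary behaves well under the comparison/replacement, and that the almost-minimizing property in the free boundary min-max (with $\mathfrak{Is}_{\tan}$ in place of $\mathfrak{Is}_{\out}$) controls competitors that are allowed to move along $\partial M$. A secondary point is uniformity: the number of balls in the cover, and hence the number of surgeries, must be bounded independently of $j$ and $t$, which follows from the uniform bound on the mass of the bad part and a Vitali-type covering argument at scale comparable to $\eps$.

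Once this Proposition is in hand, Theorem~\ref{th:top} follows in the standard way: $\widetilde\Sigma_t^j$ lies in $V_{2\eps}(\Gamma)$, which retracts onto $\Gamma$, so one pushes the curves $\gamma_i^l$ (chosen so that no nontrivial integer combination is homologically trivial in $\Gamma_i$, via Lemma~\ref{lem:hom}) into $V_\eps(\Gamma)\cap\widetilde\Sigma_t^j=V_\eps(\Gamma)\cap\Sigma_t^j$; the retraction then shows that these curves remain homologically independent in $\widetilde\Sigma_t^j$, forcing $\tbf(\widetilde\Sigma_t^j)\ge\sum_i m_i=\tbf(\Gamma)$. Combined with $\tbf(\widetilde\Sigma_t^j)\le\tbf(\Sigma_t^j)$ from the surgery analysis above, and letting first $t\to t_j$ and then $j\to\infty$ (and $\eps\to0$), one obtains the claimed inequality $\tbf(\Gamma)\le\liminf_j\liminf_{t\to t_j}\tbf(\Sigma_t^j)$.
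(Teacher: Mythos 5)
Your proposal goes down the wrong road for this particular Proposition. The paper's proof (following \cite[Proposition~2.3]{DeLPe}) is entirely soft: since $\Gamma$ is the support of the varifold limit, the mass of $\Sigma_t^j$ in the shell $\Ome=V_{2\eps}(\Gamma)\setminus\overline{V_\eps(\Gamma)}$ tends to $0$, and the coarea formula then produces a level $\sigma\in(\eps,2\eps)$ of the distance to $\Gamma$ at which $\Delta_\sigma=\{d(\cdot,\Gamma)=\sigma\}$ meets $\Sigma_t^j$ transversally in finitely many curves of total length as small as one wishes. Each such curve is either a short closed curve bounding a small disk in $\Delta_\sigma$, or a short arc with endpoints on $\partial M$ bounding a small half-disk in $\Delta_\sigma$ (this is the only place the free boundary setting intervenes, via the second bullet of the surgery definition); cutting along these curves, innermost first, and capping with those disks and half-disks is exactly the neck/strip surgery of the Definition, confines the result to $V_{2\eps}(\Gamma)$, and leaves $\Sigma_t^j\cap V_\eps(\Gamma)$ untouched. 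No almost-minimizing property, no replacement by area minimizers, and no regularity theory enter at this stage.

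Your replacement scheme has two concrete problems. First, replacing $\Sigma_t^j$ by area minimizers in small balls covering the complement of $V_\eps(\Gamma)$ does not yield a surface contained in $V_{2\eps}(\Gamma)$: a local replacement preserves the trace $\Sigma_t^j\cap\partial B$ and hence keeps mass wherever the original surface already is; nothing in your construction deletes the portion of $\Sigma_t^j$ far from $\Gamma$, and ``small mass'' alone does not allow you to discard it. Second, swapping the contents of a ball for a minimizer with the same trace on $\partial B$ is not, in general, a composition of the elementary moves ``cut an annulus, glue two disks'' or ``cut a strip, glue two half-disks'': the trace on $\partial B$ may consist of many curves and the topological change of such a replacement is uncontrolled, so the monotonicity $\tbf(\widetilde\Sigma_t^j)\le\tbf(\Sigma_t^j)$ --- which is the whole reason the Proposition insists on ``obtained through surgery'' --- is not justified by your argument. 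The almost-minimizing and replacement machinery you invoke is what is needed elsewhere in the min-max scheme (regularity of the limit, Simon's lifting lemma), not for this Proposition.
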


\begin{proof}
Let $\Ome=V_{2\eps}(\Gamma)\setminus\overline{V_\eps(\Gamma)}$. Since $\Gamma$ is the support of $\lim\vbf(\Sigma_{t_j}^{j})$, we have
\[
\lim_{j\to\infty}\limsup_{t\to t_j}\boH^2(\Sigma_t^j\cap\Ome)=0.
\]
Thus, for $\eta>0$, we can find a positive integer $N$ and $\delta_j>0$ such that
\[
\boH^2(\Sigma_t^j\cap\Ome)<\eta\quad\text{for}\quad j\ge N\quad\text{and}\quad|t-t_j|<\delta_j.
\]
If $\Delta_\sigma=\{p\in M\mid d(p,\Gamma)=\sigma\}$, by the coarea formula, we have
\[
\int_\eps^{2\eps} \boH^1(\Sigma_t^j\cap \Delta_\sigma)d\sigma\le C\boH^2(\Sigma_t^j\cap\Ome)<C\eta,
\]
for some constant $C$ independent of $t$ and $j$. Thus $\boH^1(\Sigma_t^j\cap\Delta_\sigma)\le2C\eta/\eps$ for a set of $\sigma$ of measure at least $\eps/2$. We can then chose $\sigma$ such that $\Delta_\sigma$ intersects $\Sigma_t^j$ transversally and $\boH^1(\Sigma_t^j\cap \Delta_\sigma)\le2C\eta/\eps$.

Notice that there are positive constants $\lambda$ and $K$ such the following is true. For $s\in(0,2\eps)$, any simple curve $\gamma$ in $\Delta_s$ with length less than $\lambda$ satisfies:
\begin{itemize}
\item if $\gamma$ is closed then it bounds an open disk $D\subset\Delta_s$ 
with $\diam(D)\le K\length(\gamma)$; 
\item if the end-points of $\gamma$ are in $\partial\Delta_s$ then there is an 
open half-disk $D\subset\Delta_s$ such that $\partial D\setminus\partial 
M=\gamma\setminus\partial M$ and $\diam(D)\le K\length(\gamma)$.
\end{itemize} 

We may assume that $2C\eta/\eps<\lambda$. Since $\Sigma_t^j\cap\Delta_s$ is transverse, it is the union of a finite number of curves that are either closed or have end-points in $\partial\Delta_s$. In any case, the above properties apply and give the expected result as in~\cite{DeLPe}. Notice that one has to consider first an innermost curve in $\Sigma_t^j\cap\Delta_s$ to do the construction of the proof of \cite[Proposition~2.3]{DeLPe}.
\end{proof}

The proof of Simon's Lifting Lemma~\cite[Proposition~2.1]{DeLPe} allows to lift a closed curve $\gamma\in\Gamma_i$ to a closed curve $\tilde\gamma$ in $\Sigma_{t_j}^{j}$ in a tubular neighborhood of $\gamma$. So the whole proof of \cite[Theorem~1.6]{DeLPe} can be carried out in the $\mu/2$-tubular neighborhood of $\cup_{i,l}\gamma_i^l$. This gives Theorem~\ref{th:top}.

\section{Rigidity of the half-ball}\label{sec:rigid}

In this section, we state and prove a rigidity statement for the Euclidean 
half-ball. More precisely, we have the following result:

\begin{thm}\label{th:rigidhalfball}
Let $M$ be a compact connected $n$-manifold with piecewise smooth boundary 
whose boundary is made up of two smooth pieces, say $\Sigma$ and $\Delta$, such 
that $\Sigma\cap\Delta=\boC(M)$. Consider on $M$ a 
Riemannian metric $g$ such that:
\begin{itemize}
\item $\Ric\ge 0$;
\item the mean curvature $H_\Sigma$ of $\Sigma$ with respect to the 
inward unit normal satisfies $H_\Sigma\ge n-1$;
\item $\Delta$ is totally geodesic and isometric to the Euclidean $(n-1)$-ball 
of radius $R>0$;
\item $\Sigma$ and $\Delta$ meet orthogonally.
\end{itemize}
Then $R\le 1$ and, if $R=1$, $(M,g)$ is isometric to the Euclidean unit half-ball of dimension $n$.
\end{thm}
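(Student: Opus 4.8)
The plan is to use an integral (Reilly-type) formula, in the spirit of R.~Reilly~\cite{Rei} and C.~Xia~\cite{Xia}, applied to a solution of a suitable boundary value problem on $(M,g)$. More precisely, I would identify the totally geodesic piece $\Delta$ with the Euclidean ball $B^{n-1}_R$ and solve the mixed boundary value problem
\[
\begin{cases}
\Delta_g f = n & \text{on } M,\\
f = 0 & \text{on } \Delta,\\
\partial_\eta f = c & \text{on } \Sigma,
\end{cases}
\]
where $\eta$ is the inward unit normal and $c$ is the constant dictated by the compatibility condition $\int_M \Delta_g f = \int_{\partial M}\partial_\nu f$ (here $\partial_\nu$ outward), namely $c\,|\Sigma| = -n|M|$ up to sign conventions. (One should check that this mixed Dirichlet/Neumann problem on a manifold with corner $\boC(M)$ along which $\Sigma$ and $\Delta$ meet orthogonally admits a solution of enough regularity to run the argument; the orthogonality of the corner is exactly what makes the elliptic theory with mixed boundary conditions behave, and one can reflect across $\Delta$ to smooth things out.) Applying Reilly's formula
\[
\int_M \big((\Delta_g f)^2 - |\nabla^2 f|^2 - \Ric(\nabla f,\nabla f)\big)
= \int_{\partial M}\Big(H_{\partial M}(\partial_\nu f)^2 + 2(\partial_\nu f)\Delta_{\partial M} f + \ii_{\partial M}(\nabla^{\partial M}f,\nabla^{\partial M}f)\Big)
\]
and using $\Ric\ge 0$ together with the Cauchy–Schwarz inequality $|\nabla^2 f|^2\ge (\Delta_g f)^2/n = n$, the left-hand side is $\le 0$. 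On $\Delta$ the boundary term simplifies because $\Delta$ is totally geodesic and $f\equiv 0$ there (so the tangential gradient and tangential Laplacian of $f$ on $\Delta$ vanish, leaving only $H_\Delta(\partial_\nu f)^2 = 0$); on $\Sigma$ one uses $H_\Sigma\ge n-1$, $\partial_\nu f = -c$ constant (so $\Delta_\Sigma f$ integrates to zero against a constant), and $\ii_\Sigma\ge 0$ — wait, here one must be slightly more careful: the cleanest route is to instead keep track of $\partial_\nu f$ on $\Delta$, which is \emph{not} zero, via the known Euclidean geometry of $\Delta$, and this is where the radius $R$ enters. Concretely, since $\Delta$ is a flat $(n-1)$-ball and $f$ restricted to (a collar of) $\Delta$ solves a problem comparable to the Euclidean $\Delta_{\mathbb R^{n-1}} (\text{something}) $, one extracts the inequality $R\le 1$ from comparing $\int_\Delta (\partial_\eta f)$ with $\int_\Sigma(\partial_\eta f)$ and the scaling of these quantities; equivalently, one plugs in the Euclidean model solution $f_0 = \tfrac12(|x|^2 - 1)$ to see that equality forces $R=1$.

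Granting the inequality $R\le 1$, the rigidity statement "$R=1 \implies (M,g)$ isometric to the Euclidean unit half-ball" comes from chasing the equality cases. Equality in $|\nabla^2 f|^2 \ge (\Delta_g f)^2/n$ forces $\nabla^2 f = g$ everywhere on $M$; equality in $\Ric(\nabla f,\nabla f)\ge 0$ is automatic once $\nabla f$ is controlled; and equality in the boundary term forces $H_\Sigma \equiv n-1$ and $\ii_\Sigma(\nabla^\Sigma f, \nabla^\Sigma f)$ to match its extremal value along $\Sigma$. From $\nabla^2 f = g$ one gets that $f$ is a "distance-squared-like" function: $|\nabla f|^2 = 2f + \text{const}$, its level sets are umbilic, the flow of $\nabla f/|\nabla f|$ produces geodesic polar-type coordinates, and $M$ is forced to be a warped product which, combined with $\nabla^2 f = g$, is rigid enough to pin down the Euclidean metric. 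The boundary conditions then identify $\Delta$ with the flat equatorial disk and $\Sigma$ with the round hemisphere $\S^n\cap B^n_+$, meeting orthogonally; together this gives the isometry with the Euclidean unit half-ball. One can alternatively phrase this last step via a Pogorelov/Obata-type argument: $\nabla^2 f = g$ with $f$ attaining its minimum at an interior point would directly give a round metric, but here the minimum is on $\Delta$, so one reflects $M$ in $\Delta$ (legitimate since $\Delta$ is totally geodesic and $f=0$, $\partial$-even there) to obtain a closed-up manifold on which the Obata-type rigidity applies cleanly, then descends to $M$.

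The main obstacle I anticipate is \textbf{the analysis at the corner} $\boC(M) = \Sigma\cap\Delta$: both the solvability/regularity of the mixed Dirichlet–Neumann problem and the validity of Reilly's integration-by-parts formula require that $f$ be regular enough near the corner, and mixed boundary conditions generically produce singularities there. The orthogonality hypothesis $\Sigma\perp\Delta$ is precisely the condition under which the corner is "benign" — indeed, reflecting across the totally geodesic $\Delta$ turns the mixed problem into a pure Neumann problem on a manifold with smooth boundary (the double of $\Sigma$), by oddness/evenness of the data, and then standard elliptic theory applies and Reilly's formula holds on the double. So I would set up the whole argument on the reflected manifold $\widehat M = M\cup_\Delta M$ from the start: solve $\Delta f = n$ with $\partial_\nu f = c$ on $\partial\widehat M = \widehat\Sigma$, note that the reflection symmetry forces $f$ to be odd across $\Delta$ hence $f\equiv 0$ on $\Delta$, run Reilly on $\widehat M$, and only at the end restrict to $M$. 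The remaining routine-but-nontrivial points are: verifying the sign conventions in Reilly's formula match the stated curvature conventions of the paper; checking $\Ric\ge 0$ and $H_\Sigma\ge n-1$ pass correctly to $\widehat M$ (they do, since $\Delta$ is totally geodesic); and carrying out the equality-case geometric rigidity, for which the cleanest tool is Obata's theorem applied to $\widehat M$ after confirming $\widehat M$ is closed up appropriately — actually $\widehat M$ still has boundary $\widehat\Sigma$, so one uses instead the Reilly-rigidity for manifolds with boundary (as in Xia~\cite{Xia} or Qiu–Xia), which directly yields the Euclidean ball.
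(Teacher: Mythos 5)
Your overall strategy (an auxiliary elliptic problem plus a Reilly/Bochner identity, following Xia and Reilly) is the right family of ideas, but the specific setup you propose has a genuine gap: as designed, it cannot produce the inequality $R\le 1$, and the step where $R$ is supposed to appear is exactly the step you leave unexplained. With your mixed problem ($f=0$ on $\Delta$, $\partial_\eta f=c$ on $\Sigma$), the hypothesis that $\Delta$ is a flat ball of radius $R$ essentially does not enter the Reilly identity: the $\Delta$-boundary term vanishes identically ($f\equiv0$ there and $\Delta$ is totally geodesic), and $\partial_\nu f$ on $\Delta$ is an output of the PDE, not something you can prescribe or estimate in terms of $R$. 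Meanwhile the $\Sigma$-boundary term contains $\ii_{\Sigma}(\nabla^{\Sigma}f,\nabla^{\Sigma}f)$ and $2c\,\Delta_{\Sigma}(f_{|\Sigma})$; the first is uncontrollable because only $H_\Sigma\ge n-1$ is assumed (no convexity of $\Sigma$), and the second does not integrate to zero since $\Sigma$ has boundary $\boC(M)$. The paper's allocation of boundary data is the opposite of yours, and for precisely these reasons: it solves the \emph{pure Dirichlet} problem $\barre\Delta u=-2n$ with $u=0$ on $\Sigma$ (so that on $\Sigma$ the tangential gradient vanishes and only $H_\Sigma$ survives) and $u=2Rd-d^2$ on $\Delta$ (so that the flatness of $\Delta$ gives $\Delta f=-2(n-1)$ exactly and the radius $R$ enters through the value $u(\bar p)=R^2$ at the center). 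It then runs a \emph{pointwise} maximum-principle argument on the $P$-function $F=\tfrac12|\barre\nabla u|^2+2u$, which is subharmonic by Bochner, and compares $F(\bar p)\ge 2R^2$ with the bound $F\le 2$ forced by the Hopf lemma on $\Sigma$; this is where $R\le1$ actually comes from. Nothing in your integral-identity scheme replaces this comparison.

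Two further points. First, your corner fix has a parity error: a solution of $\Delta f=n$ on the double $\widehat M=M\cup_\Delta M$ with reflection-symmetric Neumann data is \emph{even} across $\Delta$ (the reflection is an isometry preserving the equation and the data), not odd, so it does not vanish on $\Delta$; to get odd symmetry you would need the source term to be odd ($+n$ on one copy, $-n$ on the other), which reintroduces a regularity problem along $\Delta$. The corner $\boC(M)$ genuinely requires work even in the paper's pure Dirichlet setting, because the boundary data is only continuous across $\boC(M)$ and $F$ involves $|\barre\nabla u|^2$; the paper handles this with a blow-up argument showing $F\to 2R^2$ along sequences approaching the corner. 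Second, your equality-case sketch ($\barre\nabla^2 f$ proportional to $g$, then a polar-coordinate/Jacobi-field rigidity from the center of $\Delta$) does match the paper's endgame, but it is contingent on first establishing $R\le1$ and pinning down the full equality configuration ($\chi=-2$ and $H=n-1$ on $\Sigma$, $\chi=0$ on $\Delta$), which your setup does not deliver.
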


Notice that \textit{a priori} we do not assume that $M$ is diffeomorphic to the 
Euclidean half-ball.

The rest of this section is devoted to the proof of the above statement. The 
proof is inspired by the work C.~Xia~\cite{Xia} which is based on the work of 
R.C.~Reilly~\cite{Rei}.

On $\Delta$, let $d$ be the distance function to $\partial\Delta$ and consider on $\partial M$ the continuous function
\[
f:p\mapsto
\begin{cases}
(2Rd-d^2)(p)&\text{if }p\in\Delta,\\
0&\text{if }p\in\Sigma.
\end{cases}
\]
Notice that $(2Rd-d^2)$ equals to $R^2$ minus the squared distance 
to the center of $\Delta$. Then let $u$ be the function on $M$ that solves
\[
\begin{cases}\barre\Delta u=-2n&\text{on }M,\\
u=f&\text{on }\partial M.
\end{cases}
\]
We notice that, by elliptic regularity, $u$ is smooth up 
to $\partial M$ except along 
$\partial\Delta=\partial\Sigma=\boC(M)$. Consider the function 
$F=\frac12|\barre\nabla u|^2+2u$ on $M\setminus\boC(M)$. Using Bochner 
formula together with $\tr\barre\nabla^2u=\barre\Delta u=-2n$, we first have
\begin{equation}\label{eq:bochner}
\begin{split}
\barre\Delta F&=(\barre\nabla u,\barre\nabla \barre\Delta u)+|\barre\nabla^2 
u|^2+\Ric(\barre\nabla u,\barre\nabla 
u)+2\barre\Delta u\\
&\ge |\barre\nabla^2u+2g|^2+4n-4n=|\barre\nabla^2u+2g|^2\ge 0\,.
\end{split}
\end{equation}
So $F$ is subharmonic. Let us compute 
$\partial_\nu F$ along $\Sigma$ and $\Delta$. Observe that
\[
\frac12\partial_\nu|\barre\nabla u|^2=\barre\nabla^2u(\nu,\barre\nabla 
u)=\barre\nabla^2u(\nu,\nabla f)+\chi\barre\nabla^2u(\nu,\nu),
\]
where $\chi=\partial_\nu u$. Since $\tr\barre\nabla^2u=-2n$, if $(e_i)_{1\le i\le n-1}$ is an orthonormal basis of $T\partial M$, we have
\begin{align*}
\barre\nabla^2u(\nu,\nu)&=-2n-\sum_i\barre\nabla^2u(e_i,e_i)\\
&=-2n-\sum_i(\barre\nabla_{e_i}(\nabla f+\chi\nu),e_i)\\
&=-2n-\Delta f-\chi H\,,
\end{align*}
where $H=\tr_{\partial M}\ii$. We also have
\[
\barre\nabla^2 u(\nu,\nabla f)=(\barre\nabla_{\nabla f}\barre\nabla 
u,\nu)=-\ii(\nabla f,\nabla f)+(\nabla f,\nabla\chi).
\]	
Therefore
\[
\frac12\partial_\nu|\barre\nabla u|^2=-\ii(\nabla f,\nabla f)+(\nabla f,\nabla\chi)-\chi(2n+\Delta f+\chi H)
\]
and
\[
\partial_\nu F=-\ii(\nabla f,\nabla f)+(\nabla f,\nabla\chi)-\chi(2(n-1)+\Delta 
f+\chi H).
\]
On $\Sigma$, since $f=0$ and $H_\Sigma\ge n-1$, we have 
\begin{equation}\label{eq:parnuFSigma}
\partial_\nu F=-\chi (2(n-1)+\chi H)\le -(n-1)\chi(2+\chi).
\end{equation}
On $\Delta$, since $\Delta$ is totally geodesic and isometric to the 
Euclidean 
$(n-1)$-ball of radius $R$, $\Delta f=-2(n-1)$ and then
\begin{equation}\label{eq:parnuFDelta}
\partial_\nu F=(\nabla f,\nabla\chi).
\end{equation}
Let us remark that at the center $\bar p$ of $\Delta$, we have $u(\bar p
)=f(\bar 
p)=R^2$ and so $F(\bar p)\ge2R^2$. 

\begin{claim}\label{cl:2}
$R\le 1$ and, if $R=1$, $F$ is constant and equal to $2$.
\end{claim}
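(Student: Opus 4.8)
The plan is to exploit the subharmonicity of $F$ established in \eqref{eq:bochner} together with the boundary computations \eqref{eq:parnuFSigma} and \eqref{eq:parnuFDelta}, and to integrate. Since $F$ is subharmonic on $M\setminus\boC(M)$, one expects $\int_M\barre\Delta F\ge 0$; formally Stokes gives $\int_{\partial M}\partial_\nu F\ge 0$, that is, $\int_\Sigma\partial_\nu F+\int_\Delta\partial_\nu F\ge 0$ (the corner $\boC(M)$ has measure zero, but one must first check that the singular behaviour of $u$ near $\boC(M)$ is mild enough that no extra boundary term appears — this is the main technical obstacle, see below). On $\Sigma$ we use \eqref{eq:parnuFSigma}: $\partial_\nu F\le-(n-1)\chi(2+\chi)$. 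On $\Delta$ we use \eqref{eq:parnuFDelta}: $\partial_\nu F=(\nabla f,\nabla\chi)$, which after integrating by parts over the $(n-1)$-ball $\Delta$ becomes $-\int_\Delta\chi\,\Delta f+\int_{\partial\Delta}\chi\,\partial_\nu f=2(n-1)\int_\Delta\chi+2\int_{\partial\Delta}\chi$ using $\Delta f=-2(n-1)$ and $\partial_\nu f=-2$ on $\partial\Delta$ (the conormal on $\partial\Delta$ points outward in $\Delta$; along $\boC(M)$ we have $f=0$ from the $\Sigma$ side so $\chi$ extends continuously).

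Next I would identify $\chi=\partial_\nu u$ on the two faces. The key observation is that since $u=f=0$ on $\Sigma$, the gradient $\barre\nabla u$ on $\Sigma$ is purely normal, so $|\barre\nabla u|^2=\chi^2$ there and $F=\tfrac12\chi^2$ on $\Sigma$. Similarly, since $\Sigma$ and $\Delta$ meet orthogonally and $u=f$ with $f$ vanishing along $\boC(M)$, one gets boundary identities linking the traces of $\chi$ on $\Sigma$ and on $\Delta$ along the corner. Assembling the sign information: on $\Sigma$, $\partial_\nu F\le-(n-1)\chi(2+\chi)$, while the $\Delta$-contribution, after the integration by parts above, is controlled by the values of $\chi$. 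The plan is to combine these with the Reilly-type identity — integrating $|\barre\nabla^2 u+2g|^2\ge 0$ over $M$ against the boundary data — to force a differential/algebraic inequality whose only solution is $R\le 1$; the computation of $F(\bar p)\ge 2R^2$ at the center, combined with the maximum principle (a subharmonic function attains its max on $\partial M$, hence $F\le\max_{\partial M}F$), pins down the scaling. Concretely: if $F$ attains its maximum at an interior or corner point, Hopf's lemma forces $\partial_\nu F>0$ somewhere on a smooth face unless $F$ is constant, and tracking the sign constraints \eqref{eq:parnuFSigma}–\eqref{eq:parnuFDelta} together with $F(\bar p)\ge 2R^2$ yields $R\le 1$.

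For the equality case $R=1$: if $R=1$ then chasing through the inequalities, $\int_M|\barre\nabla^2 u+2g|^2=0$, so $\barre\nabla^2 u=-2g$ everywhere; hence $\barre\Delta F=|\barre\nabla^2u+2g|^2=0$, so $F$ is harmonic, and moreover $\barre\nabla F=\barre\nabla(\tfrac12|\barre\nabla u|^2+2u)=\barre\nabla^2 u(\barre\nabla u,\cdot)+2\barre\nabla u=-2\barre\nabla u+2\barre\nabla u=0$, so $F$ is constant. Evaluating at $\bar p$, where $F(\bar p)\ge 2R^2=2$, and using $F\le\max_{\partial M}F$ together with $F=\tfrac12\chi^2$ on $\Sigma$ and the $\Delta$-face computation, one concludes $F\equiv 2$. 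This is exactly what Claim~\ref{cl:2} asserts, and the stronger conclusion $\barre\nabla^2 u=-2g$ (which makes $-u+$const a distance-squared-type function realizing the Euclidean structure, à la Reilly/Obata) is what later feeds the full rigidity statement that $(M,g)$ is the Euclidean half-ball.

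The step I expect to be the main obstacle is justifying the integration by parts across the corner $\boC(M)=\partial\Delta=\partial\Sigma$: $u$ is only stated to be smooth up to $\partial M$ \emph{away from} $\boC(M)$, so one must control $\barre\nabla u$, and hence $F$ and $\partial_\nu F$, near the corner — e.g. by showing $|\barre\nabla u|$ stays bounded (or has at worst an integrable-gradient singularity) near $\boC(M)$, using that $f$ is Lipschitz and the faces meet orthogonally, so that the codimension-two corner contributes no boundary term. Once that is in hand, the argument is a boundary-integral bookkeeping exercise combining \eqref{eq:bochner}, \eqref{eq:parnuFSigma}, \eqref{eq:parnuFDelta}, and the value $F(\bar p)\ge 2R^2$.
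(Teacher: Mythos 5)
There is a genuine gap. Your first route --- integrating $\barre\Delta F\ge 0$ over $M$ and pushing everything to the boundary --- does not close: after the integration by parts on $\Delta$ you obtain an inequality of the form $0\le -(n-1)\int_\Sigma\chi(2+\chi)+2(n-1)\int_\Delta\chi-2R\int_{\partial\Delta}\chi$ (note also that $\partial_{\nu}f=-2R$, not $-2$, on $\partial\Delta$), and none of the terms $\int_\Delta\chi$, $\int_{\partial\Delta}\chi$ carries a sign or a bound, so no inequality on $R$ follows. The radius $R$ never enters this integral identity in a way that can be compared with the term $\int_\Sigma\chi(2+\chi)$; the sharp constant comes instead from comparing the \emph{pointwise} values $F(\bar p)\ge 2R^2$ and $F\le 2$ on $\Sigma$, which is exactly the maximum-principle argument the paper runs. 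You gesture at that argument in your second paragraph, but you do not execute its three essential steps: (i) at a boundary maximum on $\Sigma$ one has $\partial_\nu F\ge 0$, which via \eqref{eq:parnuFSigma} forces $-2\le\chi\le 0$ and hence $F=\tfrac12\chi^2\le 2$ there --- this is where $F\le 2$, and thus $2R^2\le F(\bar p)\le 2$, actually comes from; (ii) at a boundary maximum on $\Delta$, \eqref{eq:parnuFDelta} gives $\partial_\nu F=(\nabla f,\nabla\chi)$, which has no sign, so one needs the separate dichotomy ($\nabla\chi=0$ or $\chi=0$ at the maximum, using $F=2R^2+\tfrac12\chi^2$ on $\Delta$) before Hopf's lemma forces $F$ to be constant; (iii) the supremum of $F$ may only be approached along a sequence tending to the corner $\boC(M)$, where neither Hopf's lemma nor your Stokes argument applies.

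Point (iii) is the real missing idea, and you have only half-diagnosed it: you treat the corner as a technical nuisance for the integration by parts ("show $|\barre\nabla u|$ stays bounded"), but boundedness of $\barre\nabla u$ is not enough for either route. For the maximum-principle route one must compute the actual limiting value of $F$ at the corner and show it equals $2R^2$ (so that the supremum is then also attained at the interior point $\bar p$ of $\Delta$ and the $\Delta$-case applies); the paper does this via a blow-up of $u$ at scale $r_k$ around the corner, a barrier $0\le u\le c\delta$, Schauder estimates, and a Liouville theorem identifying the blow-up limit as $2Rx_1$. For your integral route one would need control of $\barre\nabla^2u$ near the codimension-two corner (where it can a priori blow up like $1/r$, making $|\barre\nabla^2u+2g|^2$ borderline non-integrable), which is strictly harder. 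Your equality-case computation ($\barre\nabla^2u=-2g\Rightarrow\barre\nabla F=0$) is correct and is a nice shortcut, but it presupposes the unproven integral identity; the paper instead gets $F\equiv 2$ directly from the maximum principle and only then reads off $\barre\nabla^2u=-2g$ from \eqref{eq:bochner}. As written, the proposal does not establish the claim.
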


\begin{proof}[Proof of Claim~\ref{cl:2}]
Let us consider $(q_k)$ a sequence of 
points in $M\setminus\boC(M)$ such that $$\lim 
F(q_k)=\sup_{M\setminus 
\boC(M)} F.$$ We may 
assume that $q_k\to\bar q\,\in M$. Since $F$ is subharmonic, we may even 
assume that $\bar q\in\partial M$. If $\bar q\in \Sigma\cap\Delta$, we will 
prove 
later (see Claim~\ref{cl:corner}) that
\[
\sup_{M\setminus \boC(M)} F=\lim F(q_{k})=2R^2;
\]
thus the maximum value of $F$ is reached also at $\bar p$. So let us assume 
that $\bar q\in\partial M\setminus\boC(M)$. Then necessarily 
$\partial_\nu F(\bar q) \ge 0$. If $\bar q\in \Sigma$, 
\eqref{eq:parnuFSigma} 
implies $-2\le\chi(\bar q)\le 0$ and then $F(\bar 
q)\le2$, since $F=\frac12\chi^2$ on $\Sigma$. Hence 
$2\ge F(\bar q)\ge F(\bar p)\ge 2R^2$; this 
implies $R\le 1$ and, if $R=1$, the 
maximum value of $F$ is reached also at $\bar p$. Thus we assume $\bar 
q\in\Delta$.

On $\Delta$, $F=2R^2+\frac{1}{2}\chi^2$. So, at the maximum, one has 
$\nabla 
\chi(\bar q)=0$ or $\chi(\bar q)=0$. In the first case, \eqref{eq:parnuFDelta} 
implies $\partial_\nu F(\bar q)=0$, so $F$ is constant by the boundary 
maximum principle and at most $2$ by \eqref{eq:parnuFSigma} and 
$F=\frac12\chi^2$ on $\Sigma$. Then $R\le 1$ 
since 
$F(\bar p)\ge 2R^2$. In the case 
$\chi(\bar 
q)=0$, we have $F(\bar q)=2R^2$. As a consequence, $\chi=0$ 
along $\Delta$ and 
thus $\nabla\chi(\bar q)=0$, which, as above, implies that $F$ is 
constant and at most $2$ and $R\le 1$. Moreover, if $R=1$, $F$ 
is 
constant and equal to~$2$.
\end{proof}

Let us now finish the proof of Theorem~\ref{th:rigidhalfball} in the 
case $R=1$ and $F$ equals $2$. Because of~\eqref{eq:bochner}, 
\eqref{eq:parnuFSigma}, and \eqref{eq:parnuFDelta}, 
$\barre\nabla^2u=-2g$ and $\Ric(\barre\nabla u,\barre\nabla u)=0$ 
on $M$, $\chi=0$ on $\Delta$, and $\chi=-2$ and $H=n-1$ on 
$\Sigma$. Moreover, since $F\equiv 2$, $|\barre\nabla 
u|^2=4(1-u)$, thus $\barre\nabla u$ has constant norm along a level set 
of $u$. So the second fundamental form of such a level set is given by 
$-(\barre\nabla_X\frac{\barre\nabla 
u}{2\sqrt{1-u}},Y)=\frac{-1}{2\sqrt{1-u}}\barre\nabla^2u(X,Y)=\frac{1}{\sqrt{1-u}}(X,Y)$
 for $X,Y$ tangent to the level set. By the maximum principle, $u\ge 0$ 
and $\{u=0\}=\Sigma$. 

At the center $\bar p$ of $\Delta$, we have $u(\bar p)=1$ and $\barre\nabla 
u(\bar p)=0$. If $\gamma$ is a unit geodesic starting at $\bar p$, we have 
$\frac{d}{ds}u(\gamma(s))_{|s=0}=0$ and $\frac{d^2}{ds^2}u(\gamma(s))=-2$ so 
that $u(\gamma(s))=1-s^2$. Thus we have proved that~$M$ is the image of 
the unit half-ball in $T_{\bar p}M$ by the exponential map. Let us fix 
$v$, $y_1$ and $y_2$ in $T_{\bar p}M$ with $|v|=1$, $v$
pointing inward and $y_i$ normal to $v$. Let us define $Y_i(s)=d 
\exp_{\bar p}(sv)(sy_i)$ two Jacobi fields along $s\mapsto\exp_{\bar 
p}(sv)$ and $U=d\exp_{\bar p}(sv)v$ the unit speed vector 
along the geodesic (notice that $Y_1$ and $Y_2$ are tangent to the level set 
$\{u=1-s^2\}$). Then, by the above computation of the second fundamental 
forms of the level sets, we have
\begin{align*}
\frac{d}{ds}(Y_1,Y_2)=(\barre\nabla_UY_1,Y_2)+(Y_1,\barre\nabla_UY_2)= 
(\barre\nabla_{Y_1}U,Y_2)+(Y_1,\barre\nabla_{Y_2}U)=\frac2s(Y_1,Y_2).
\end{align*}
Thus $(Y_1,Y_2)=s^2(y_1,y_2)$ and $\exp_{\bar p}$ is an isometry from the unit half-ball in $T_{\bar p}M$ to $M$. This finishes the proof of Theorem~\ref{th:rigidhalfball} when the following Claim is proved.

\begin{claim}\label{cl:corner}
If $\bar q\in\Sigma\cap\Delta=\boC(M)$, then $\lim F(q_k)=2$.
\end{claim}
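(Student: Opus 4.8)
The plan is to understand the behaviour of $F=\frac12|\barre\nabla u|^2+2u$ near the corner $\boC(M)$, where $u$ fails to be smooth, and to show the boundary value $f$ forces $\limsup_{q\to\bar q}F(q)\le 2 = 2R^2$ for any $\bar q\in\boC(M)$ (recall $R=1$ in the case under consideration, but the estimate $\limsup F\le 2R^2$ is what is actually claimed in Claim~\ref{cl:2}). The key point is that along $\Delta$ the boundary datum is $f=2Rd-d^2$, which is exactly $R^2$ minus the square of the distance to the center; along $\Sigma$, $f=0$. So near a corner point both $f$ and its first derivatives along $\partial M$ are controlled: on $\Delta$, $|\nabla^{\partial M}f|=|\nabla f|\to 2R$ as we approach $\partial\Delta$ (since $d\to 0$ and $|\nabla d|=1$), and on $\Sigma$, $f\equiv 0$. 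The first step is therefore to get precise control of $u$ and $\barre\nabla u$ up to $\boC(M)$.

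First I would construct an explicit barrier. Since $\Sigma$ and $\Delta$ meet orthogonally and $\Delta$ is totally geodesic and flat, in a neighborhood of a corner point $\bar q$ the geometry is $C^0$-close (indeed one can take Fermi-type coordinates) to the model Euclidean corner $\{x_n\ge 0\}\cap\{x_{n-1}\ge 0\}$ with $\Delta=\{x_n=0\}$ and $\Sigma=\{x_{n-1}=0\}$ to leading order. In the Euclidean model, the function $v_0 = R^2 - |x'|^2$, where $x'$ denotes the coordinates tangent to $\Delta$ and orthogonal to the radial direction... more simply, since on $\Delta$ near $\bar q$ we have $f = 2Rd - d^2$ with $d$ the distance to $\partial\Delta$, and on $\Sigma$ we have $f=0$, the harmonic-ish extension is comparable to $2Rx_n' \cdot(\text{something}) $. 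The cleanest route: compare $u$ from above and below with functions of the form $2R\,\rho - \rho^2 + \text{(bounded harmonic correction)}$ where $\rho$ measures distance to $\Sigma$, using that $\barre\Delta u = -2n$ and the maximum principle; one shows $u \le 2R\rho$ plus lower-order terms near $\bar q$, hence $u(\bar q)=0$ and, crucially, $|\barre\nabla u|$ stays bounded by $2R$ in the limit. Combined with $u\to 0$ this gives $\limsup F \le \frac12 (2R)^2 + 0 = 2R^2$.

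The main obstacle is the lack of smoothness of $u$ at $\boC(M)$: the mixed boundary-value problem (Dirichlet data $f$ with a corner where $f$ is only Lipschitz, actually $f$ is $C^{0,1}$ but not better across the edge since $\nabla f$ jumps) means standard Schauder estimates fail, so one cannot just differentiate. The resolution is to avoid differentiating at the corner altogether: one only needs the one-sided bound $\limsup_{q_k\to\bar q}F(q_k)\le 2R^2$. I would obtain this by (i) showing $0\le u \le C\,\mathrm{dist}(\cdot,\Sigma)$ near $\bar q$ via upper barriers built from the flat model — the upper barrier being roughly $2R\,\psi$ with $\psi$ a suitable function vanishing on $\Sigma$, chosen superharmonic for $\barre\Delta\cdot + 2n$ — so that $u(q_k)\to 0$; and (ii) showing the gradient bound $|\barre\nabla u|^2 \le 4R^2 + o(1)$ near $\bar q$, which follows because $F$ is subharmonic on $M\setminus\boC(M)$ and, by the boundary analysis in \eqref{eq:parnuFSigma}–\eqref{eq:parnuFDelta} together with the interior maximum principle, $\sup_{M\setminus\boC(M)}F$ is attained either at an interior/smooth-boundary point — already handled to give $F\le \max(2,2R^2)$ — or only in the limit at $\boC(M)$, where (i) and the boundary data pin down the value. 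Concretely: run the same argument as in the proof of Claim~\ref{cl:2} but now, having excluded $\bar q\in\partial M\setminus\boC(M)$ unless the conclusion already holds, we are reduced to $\bar q\in\boC(M)$, and there the barrier estimate $u\to 0$ plus a Lipschitz bound on $u$ up to the edge (which does hold, since $f$ is Lipschitz and the domain has a Lipschitz corner, by standard elliptic $W^{1,\infty}$/barrier estimates for the Dirichlet problem on Lipschitz domains) gives $|\barre\nabla u|$ bounded and $F(q_k)\to \frac12\lim|\barre\nabla u(q_k)|^2$. Finally, a short computation in the flat model — where the harmonic function with the given corner data is explicit — identifies $\lim|\barre\nabla u(q_k)|^2 = 4R^2$, whence $\lim F(q_k)=2R^2=2$ when $R=1$. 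This closes Claim~\ref{cl:corner} and hence the theorem.
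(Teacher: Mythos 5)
Your overall strategy is the same as the paper's: a barrier estimate $0\le u\le c\,\delta$ (with $\delta$ the distance to $\Sigma$) near the corner, followed by comparison with the flat quarter-space model in which the solution with data $0$ on $\Sigma$ and $2Rx_1$ on $\Delta$ is $2Rx_1$, so that $u(q_k)\to 0$ and $|\barre\nabla u(q_k)|\to 2R$, whence $F(q_k)\to 2R^2$. (Your reduction to the one-sided bound $\limsup F\le 2R^2$ is legitimate here, since $(q_k)$ is a maximizing sequence and $F(\bar p)\ge 2R^2$.)

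However, the decisive step is only asserted, and as written it has two gaps. First, the justification of your step (ii) by appeal to ``the sup is attained at an interior or smooth-boundary point, already handled'' is circular: Claim~\ref{cl:corner} is precisely the remaining case in which the supremum is only approached along $\boC(M)$, so the other cases cannot be used to bound $|\barre\nabla u|$ there. Second, and more seriously, the assertion that ``the harmonic function with the given corner data is explicit'' is false without a growth condition: on $\R_+^2\times\R^{n-2}$ the functions $2Rx_1+c\,x_1x_2$ are all harmonic, vanish on $\{x_1=0\}$ and equal $2Rx_1$ on $\{x_2=0\}$. To identify the limit one must actually (a) rescale, $u_k(x)=r_k^{-1}u(r_kx+a_k)$, (b) obtain compactness and $C^1$-convergence at the rescaled points $\theta_k$ via Schauder estimates away from the edge --- a generic Lipschitz or $W^{1,\infty}$ bound on a Lipschitz domain gives $|\barre\nabla u|\le C$ for some constant depending on all the data, not the sharp constant $2R$, and gives no convergence of gradients --- and (c) use the linear growth bound $0\le\bar u\le cKx_1$ from your step (i), Schwarz reflection across both faces, and Liouville's theorem to conclude that the blow-up limit is exactly $2Rx_1$. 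This is what the paper does; without (b) and (c) the claimed identity $\lim|\barre\nabla u(q_k)|^2=4R^2$ is not established.
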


\begin{proof}[Proof of Claim~\ref{cl:corner}]
First, let $\delta$ denote the distance function to $\Sigma$. For 
$\eps>0$ small enough, $\delta$ is smooth on $\{\delta\le\eps\}$ and 
equal to $d$ on $\Delta\cap\{\delta\le \eps\}$. Let $(e_i)_{1\le i\le n-1}$ be 
an orthonormal basis of~$\Sigma$. Completing it by $\barre\nabla\delta$, 
we can compute
\[
\barre\Delta\delta=\sum_i(\barre\nabla_{e_i}\barre\nabla\delta,e_i)= 
-\sum_i\ii_\Sigma(e_i,e_i)\le-(n-1)\quad\text{on}\quad\Sigma.
\]
Reducing $\eps>0$ if necessary, we may assume that 
$\barre\Delta\delta\le-(n-\frac32)$ on $\{\delta\le\eps\}$. Since $u$ is 
bounded in $M$, we can find a constant $c>0$ such that $u\le c\delta$ 
on $\p\{\delta\le\eps\}$ and $2n\le c(n-\frac32)$. We then have $\barre\Delta 
u=-2n\ge-c(n-\frac32)\ge\barre\Delta(c\delta)$ on 
$\{\delta\le\eps\}$. Hence, by the maximum principle, we have $0\le u\le 
c\delta$ on $\{\delta\le\eps\}$.

Let us consider a local chart of $M$ near $\bar q$: we see a neighborhood of 
$\bar q$ in $M$ as a neighborhood of the origin in $\R_+^2\times\R^{n-2}$ such 
that $\{x_2=0\}$ corresponds to $\Delta$ and $\{x_1=0\}$ corresponds 
to~$\Sigma$. The Riemannian metric is then given by $g=(g_{ij})$ and, since 
$\Sigma$ and $\Delta$ meet orthogonally, we may assume that $g$ is the identity 
at the origin. The sequence of points $(q_k)$ can then be written as 
$q_k=a_k+r_k\theta_k$, where $a_k$ is the projection of $q_k$ on 
$\{(0,0)\}\times\R^{n-2}$, $r_k\in\R_+^*$ and $\theta_k\in\S^1\times\{0\}$. We 
have $a_k\to 0$, $r_k\to 0$, and we can assume that 
$\theta_k\to\bar\theta\in\S^1\times\{0\}$. We can see $u$ as being defined on 
$\R_+^2\times\R^{n-2}$ close to the origin and we define 
$u_k(x)=\frac1{r_k}u(r_k x+a_k)$. The function $u$ solves
\[
\frac1{\sqrt{\det g}}\sum_{i,j}\partial_i(\sqrt{\det 
g}\,g^{ij}\partial_ju)=-2n\,,
\]
where $g^{ij}$ is the inverse matrix of $g$. As a consequence, $u_k$ solves
\[
\frac1{\sqrt{\det g_k}}\sum_{i,j}\partial_i(\sqrt{\det 
g_k}\,g_k^{ij}\partial_ju_k)=-2nr_k\,,
\]
where $g_k(x)=g(r_kx+a_k)$. We see that $(g_k)$ goes uniformly to the identity 
on the quarter-ball of radius $\rho$ for any $\rho>0$. Moreover, we 
know that $0\le u\le c\delta$ and $\delta\le Kx_1$ for some $K>0$; thus $0\le 
u_k\le cK x_1$. This implies that the $u_k$'s are uniformly bounded in the 
quarter-ball of radius $\rho$.

On the boundary, $u_k=0$ on $\{x_1=0\}$ and 
$u_k(x)=\frac1{r_k}(2Rd-d^2)(r_k 
x+a_k)$ on $\{x_2=0\}$; we notice that the last boundary value 
converges uniformly to $2Rx_1$. Let $D_{\rho}$ be the quarter-disk in 
$\R_+^2$ of radius $\rho$. Because of the above properties, we can 
apply Schauder estimates up to the boundary \cite[Corollary~6.7]{GiTr} to prove 
that $u_k$ has uniformly bounded $C^{2,\alpha}$-norm in 
$(D_{\rho}\setminus D_\eps)\times[-\rho,\rho]^{n-2}$ 
for any $0<\eps<\rho$. As a consequence, we can consider a subsequence 
which converges to a function $\bar u$ defined on 
$(\R_+^2\setminus\{(0,0)\})\times \R^{n-2}$ (the convergence is $C^2$-uniform 
on any compact subsets). Moreover, $\bar u$ solves $\Delta_0\bar u=0$ 
(the 
Euclidean Laplacian) and is equal to $0$ on $\{x_1=0\}$ and to $2Rx_1$ 
on 
$\{x_2=0\}$. We also notice that $0\le\bar u\le cKx_1$, so the function 
extends continuously by $0$ to $\{(0,0)\}\times\R^{n-2}$. We assert that 
actually $\bar u(x)=2Rx_1$. Let $\bar v=\bar u-2Rx_1$; it is a 
solution 
to $\Delta_0\bar v=0$ on $\R_+^2\times\R^{n-2}$ which extends 
continuously 
by $0$ on the boundary. Thus, by Schwarz reflection along the boundary, 
we can extend it to a solution to $\Delta_0\bar v=0$ on $\R^n$. We also 
notice that $|\bar v|\le \max(cK,2R)|x_1|$. This implies that 
$|\nabla \bar v|$ is 
uniformly bounded on $\R^n$ and Liouville theorem gives that there is a 
vector $a\in\R^n$ and a constant $b$ such that $\bar v(x)=(a,x)+b$. Since $\bar 
v=0$ on $\{x_1=0\}$ and $\{x_2=0\}$, we have $b=0$, $a=0$, and 
$\bar u=2Rx_1$.

Finally, since $u_k\to\bar u$ $C^2$-uniformly on $(D_2\setminus D_{1/2})\times 
[-1,1]^{n-2}$, we obtain
\[
F(q_k)=\frac12|\barre\nabla 
u|^2(q_k)+2u(q_k)=\frac12|\nabla_{g_k} 
u_k|^2(\theta_k)+2u(q_k)\longrightarrow\frac12|\nabla_{0}\bar 
u|^2(\bar\theta)+2u(\bar q)=2R^2.
\]
This finishes the proof of Claim~\ref{cl:corner} and the proof of Theorem~\ref{th:rigidhalfball}.
\end{proof}

\section{Least area minimal disk}\label{sec:least}

In this section, we prove Theorem~\ref{th:main}. So let us consider $(B,g)$ to 
be the $3$-ball with a Riemannian metric with non-negative Ricci curvature and 
$\ii_{\partial B}\ge g_{|\partial B}$. It was brought to our attention 
by R. Haslhofer that part of the below argument also appears in his recent work 
with D. Ketover~\cite{HaKe} (see Proposition~2.4 therein).

The first step consists in proving the existence of a free boundary minimal 
disk in $B$. This is a consequence of the work of Jost \cite[Theorem 
4.1]{Jos2}. Let us briefly recall the construction. The family $\{\Gamma_t\}$ 
defined by
\[
\Gamma_t=B\cap\{z=t\},\text{ for }t\in[-1,1],
\]
is a sweepout of the ball $B$. Let $\Lambda_0$ be the smallest saturated set of 
sweepouts containing $\{\Gamma_t\}$. Actually, for any 
$\{\Sigma_t\}\in\Lambda_0$, there is $t_0\in [-1,1]$ such that $\Sigma_{t_0}$ 
separates $B$ into two parts of equal volume so that $|\Sigma_{t_0}|$ is 
larger than the isoperimetric profile of $(B,g)$ for half the volume 
of~$(B,g)$. As a consequence, $W(\Lambda_0,g)>0$ and 
Theorem~\ref{th:min_max} implies the existence of free boundary minimal 
surfaces in $(B,g)$. Since $\tbf(\Gamma_t)=0$ for any $t$, Theorem~\ref{th:top} 
ensures that these minimal surfaces are either disks, spheres or projective 
planes. Besides, these minimal surfaces must have a nonempty 
boundary by Fraser and Li~\cite[Lemma 2.2]{FrLi} (notice that projective planes 
are forbidden just by the topology of the ambient space). Thus we have the 
existence of a free boundary minimal disk in $(B,g)$.

By Fraser and Li~\cite[Theorem~1.2]{FrLi}, the space of such minimal disks is compact. So there is a free boundary minimal disk $\Delta$ such that
\[
|\Delta|=\inf\{|D|;D\text{ is a free boundary minimal disk in }(B,g)\}.
\]
Since a free boundary minimal disk can be produced by min-max, we have 
$|\Delta|\le W(\Lambda_0,g)$. Because of the ambient geometry and using 
$u\equiv 1$ in \eqref{eq:stab}, $\Delta$ cannot be stable.

\begin{prop}\label{prop:sweepout}
There is a sweepout $\{\Sigma_t\}_{t\in[-1,1]}\in \Lambda_0$ with the following 
properties:
\begin{itemize}
\item $\mathcal{F}(\{\Sigma_t\})=|\Delta|$;
\item $\{\Sigma_t\}_{t\in(-\eps,\eps)}$ is a piece of the nice foliation 
associated with $\Delta$, for some $\eps>0$, such that $\Sigma_0=\Delta$;
\item for any $t\neq 0$, $\boH^2(\Sigma_t)<|\Delta|$.
\end{itemize}
\end{prop}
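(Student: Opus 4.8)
The plan is to start from the nice foliation of Lemma~\ref{lem:nicefol} near $\Delta$ and extend it to a full sweepout of $B$ whose maximal area is exactly $|\Delta|$. Since $\Delta$ has index one (this will be established in the proof of Theorem~\ref{th:main}, but for now it suffices that $\Delta$ is unstable, which we already know), Lemma~\ref{lem:nicefol} produces a family $\{X_{u_t}(\Delta)\}_{t\in(-\eps,\eps)}$ of free boundary hypersurfaces with $X_{u_0}(\Delta)=\Delta$, $\p_tu_t>0$, and mean curvature vector nowhere vanishing pointing away from $\Delta$. The key first observation is that, because the mean curvature vector points away from $\Delta$ on both sides, the first variation of area formula shows that $t\mapsto|X_{u_t}(\Delta)|$ has a strict local maximum at $t=0$; shrinking $\eps$ we may assume $|X_{u_t}(\Delta)|<|\Delta|$ for all $0<|t|<\eps$, and moreover that this family, suitably reparametrized, is a piece of a generalized smooth family of surfaces satisfying items $(a)$--$(f)$.

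Next I would close up the foliation to the two ``caps'' of $B$. The hypersurfaces $X_{u_{\pm(\eps-\delta)}}(\Delta)$ are free boundary surfaces close to $\Delta$ that cut $B$ into three pieces: a thin slab between them and two closed regions $B_+,B_-$ each bounded by one of these surfaces and a piece of $\p B$. Each $B_\pm$ is a topological $3$-ball, and I want to sweep it out by free boundary surfaces (with boundary on $\p B$) starting from $X_{u_{\pm(\eps-\delta)}}(\Delta)$ and degenerating to a point (or an arc on $\p B$), with all areas strictly below $|\Delta|$. One way to do this is to use that $X_{u_{\eps-\delta}}(\Delta)$ has strictly positive mean curvature vector pointing into $B_+$: one can run a mean-curvature-type flow, or more elementarily use the strict mean-convexity to construct by hand a foliation of a collar whose leaves have strictly decreasing area, and then on the remaining region use a coarse sweepout by small surfaces whose areas are controlled by the (small) volume of that region together with the convexity of $\p B$ — e.g. geodesic-like caps shrinking to a point, whose areas tend to $0$. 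Gluing the slab foliation and the two cap sweepouts gives a generalized smooth family $\{\Sigma_t\}_{t\in[-1,1]}$ with $\Sigma_0=\Delta$, agreeing with the nice foliation for $|t|<\eps$, and with $|\Sigma_t|<|\Delta|$ for $t\neq0$ provided the collars and caps are chosen thin enough.

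It remains to check that $\{\Sigma_t\}\in\Lambda_0$, i.e.\ that this sweepout lies in the smallest saturated set containing $\{\Gamma_t=B\cap\{z=t\}\}$. For this I would use the standard fact (as in \cite{CoDeL,MaNe3,Li_m}) that two sweepouts which are ``homotopic'' through generalized smooth families — in particular any sweepout obtained from $\{\Gamma_t\}$ by isotopies of $B$ together with the insertion of finitely many singular points and local modifications inside balls — lie in the same saturated set; concretely, one isotopes the flat disks $\Gamma_t$ to the leaves of our foliation near $t=0$, and near the endpoints the flat disks already degenerate to points on $\p B$ just like our caps. The main obstacle is precisely this last step together with the explicit construction of the cap sweepouts: one must make sure the modifications are genuinely through admissible generalized smooth families (transverse boundary intersections, the prescribed regularity in $(a)$--$(f)$, uniformly bounded singular sets) and that the area stays strictly below $|\Delta|$ uniformly — the area control near the degenerate ends is the delicate point, handled by choosing the caps to have area going to $0$ and invoking $\ii_{\p B}\ge g_{|\p B}>0$ to bound the areas of the intermediate free boundary surfaces in the thin collar. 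Once these technical points are in place, the three bullet items follow by construction.
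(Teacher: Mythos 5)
Your first step (the nice foliation gives $|\Sigma_t|<|\Delta|$ for $0<|t|<\eps$) matches the paper. The genuine gap is in your second step: you assert that each of the two large regions $B_\pm$ cut off by $X_{u_{\pm(\eps-\delta)}}(\Delta)$ can be swept out by free boundary surfaces of area strictly below $|\Delta|$, and you justify this by mean-convexity of the initial leaf plus a ``coarse sweepout by small surfaces whose areas are controlled by the (small) volume of that region.'' But $B_\pm$ are not small --- each is roughly half of $B$ --- and there is no a priori reason that every admissible sweepout of $B_+$ rel its ``bottom'' has maximal area below $|S_\eps|$. Mean-convexity only controls a collar; a mean-curvature-type flow could converge to a free boundary minimal surface inside $B_+$, and precisely in that situation no low-area sweepout of the cap exists. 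Ruling this scenario out requires a genuine geometric input that your proposal never invokes.

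The paper supplies that input as follows. It sets up the \emph{constrained} min-max problem of Subsection~\ref{subsec:constrained} on $M_+$ (with $\partial_0M_+=S_\eps$ and $\partial_+M_+=M_+\cap\partial B$), observes that the nice foliation itself provides the required free boundary mean-convex foliation near $\partial_0M_+$, and argues: if the constrained width satisfied $W(\Lambda_0^+,g)>|S_\eps|$, Theorem~\ref{th:constraint_min_max} would produce a free boundary minimal surface $S\subset M_+$ disjoint from $\Delta$, contradicting the Frankel property \cite[Lemma~2.4]{FrLi}, which uses $\Ric\ge 0$ and the convexity of $\partial B$. Hence $W(\Lambda_0^+,g)\le|S_\eps|<|\Delta|-\eta$, so a constrained sweepout of $M_+$ with maximal area $<|\Delta|$ exists; likewise for $M_-$. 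Gluing these to the nice foliation gives the desired sweepout, and membership in $\Lambda_0$ is then immediate from the definition of $\Lambda_0^+$. Your proposal is missing both the Frankel argument and the constrained min-max machinery that converts ``there is no free boundary minimal surface in the cap'' into ``a low-area sweepout of the cap exists''; without these, the area bound on the caps does not follow.
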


\begin{proof}
Let us consider $\{S_t\}_{t\in(-2\eps,2\eps)}$ be the nice foliation associated 
with $\Delta$ (Lemma~\ref{lem:nicefol}). We notice that $|S_t|<|\Delta|$ for 
$t\neq 0$. We fix $\eta>0$ such that $\max(|S_\eps|,|S_{-\eps}|)\le 
|\Delta|-\eta$. Let $\boN=\cup_{t\in(-\eps,\eps)}S_t$ be a neighborhood of 
$\Delta$ and let $M_+$ and $M_-$ be the connected components of $B\setminus 
\boN$ such that $S_{\pm\eps}\subset \partial M_\pm$. Let us consider $M_+$ 
(similar arguments can be applied to $M_-$). $M_+$ is diffeomorphic to $B^+$. 
We are going to study the constrained min-max construction associated 
with $\partial_0M_+=S_{\eps}$, $\partial_+M_+=M_+\cap\partial B$ and the 
saturated set $\Lambda_0^+$ (see Example~\ref{ex:lambda+}). On 
$\cup_{t\in[\eps,2\eps)}S_t$, we consider the function $f$ defined by 
$f(p)=t-\eps$ 
if $p\in S_t$. The function $f$ satisfies Definition~\ref{def:freemcf}, so 
$M_+$ has a free boundary mean-convex foliation near $\partial_0M_+$.
Moreover, $\ii_{\partial_+M_+}\ge 
g_{|\partial_+M_+}$. Therefore, if $W(\Lambda_0^+,g)>|S_{\eps}|$, there 
is at least one free boundary minimal surface $S$ in $M_+$ with free 
boundary in $\partial_+M_+$. As a consequence, $\Delta$ and $S$ are two 
disjoint minimal surfaces in $(B,g)$, which is not possible by the 
Frankel property \cite[Lemma~2.4]{FrLi}.

Thus there is a constrained sweepout $\{\Sigma_t^+\}_{t\in[0,1]}\in\Lambda_0^+$ 
of $M_+$ with $\Sigma_0^+=S_{\eps}$ such that 
\[
\boF(\{\Sigma_t^+\})\le|S_\eps|+\eta/2<|\Delta|.
\]
Similarly, there is a constrained sweepout $\{\Sigma_t^-\}_{t\in[0,1]}$ of 
$M_-$ with $\Sigma_0^-=S_{-\eps}$ such that 
\[
\boF(\{\Sigma_t^-\})<|\Delta|.
\]
We can then define
\[
\Sigma_t=\begin{cases}
\Sigma_{\theta_-(t)}^-&\text{if }t\in[-1,-\eps],\\
S_t&\text{if }t\in[-\eps,\eps],\\
\Sigma_{\theta_+(t)}^+&\text{if }t\in[\eps,1],\\
\end{cases}
\]
where $\theta^-(t)=-\frac{t+\eps}{1-\eps}$ and 
$\theta^+(t)=\frac{t-\eps}{1-\eps}$. $\{\Sigma_t\}_{t\in[-1,1]}$ is then a 
sweepout in $\Lambda_0$ with the expected properties.
\end{proof}

A consequence of the above result is that $|\Delta|=W(\Lambda_0,g)$.

\begin{prop}\label{prop:index}
$\Delta$ has index one.
\end{prop}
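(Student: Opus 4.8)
The plan is to combine two facts: the lower bound on the index coming from instability of $\Delta$, and an upper bound coming from the sweepout constructed in Proposition~\ref{prop:sweepout} together with the min-max characterization of $W(\Lambda_0,g)$. For the lower bound, we already noted that $u\equiv 1$ in \eqref{eq:stab} has negative $Q$-energy because $\Ric\ge 0$ with $H=0$ gives $-\int_\Delta\|A\|^2-\int_{\partial\Delta}\ii_{\partial B}(N,N)\le -|\partial\Delta|<0$ (using $\ii_{\partial B}\ge g_{|\partial B}$ and that $\partial\Delta$ meets $\partial B$ so has positive length). Hence $\Ind(\Delta)\ge 1$. So the content is the upper bound $\Ind(\Delta)\le 1$.

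First I would argue by contradiction: suppose $\Ind(\Delta)\ge 2$. Then the quadratic form $Q$ is negative on a $2$-dimensional subspace of functions on $\Delta$. The idea is that this extra negative direction lets us push the sweepout $\{\Sigma_t\}$ of Proposition~\ref{prop:sweepout} strictly below $|\Delta|$ near $t=0$ as well, contradicting $|\Delta|=W(\Lambda_0,g)$. Concretely, the nice foliation piece $\{\Sigma_t\}_{t\in(-\eps,\eps)}$ consists of graphs $X_{u_t}(\Delta)$ with $u_t=t\phi_1+v_t$; the second derivative of area of this family at $t=0$ is controlled by $Q(\phi_1,\phi_1)<0$, so $|\Sigma_t|<|\Delta|$ for $t\neq 0$ small — which is exactly the third bullet of Proposition~\ref{prop:sweepout}. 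If instead $\Ind(\Delta)\ge 2$, one should be able to deform within a larger family (parametrized by a piece of a disk rather than an interval) so that $\Delta$ itself is no longer a maximum along any path of the sweepout; but since $\Lambda_0$ is a one-parameter family, the right statement is: any sweepout in $\Lambda_0$ must contain a surface of area $\ge W(\Lambda_0,g)=|\Delta|$, and our sweepout achieves this maximum only at $t=0$ where the surface is $\Delta$ itself. The min-max theory (Theorem~\ref{th:min_max}) then says that a min-max sequence converges to a freely stationary varifold supported on free boundary minimal surfaces whose areas sum to $W(\Lambda_0,g)=|\Delta|$; since $\Delta$ is connected with $|\Delta|=W(\Lambda_0,g)$, the varifold limit is exactly $\vbf(\Delta)$ with multiplicity one.

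The standard mechanism to conclude $\Ind(\Delta)\le 1$ is the following: if $\Ind(\Delta)\ge 2$, one can use the second eigenfunction together with $\phi_1$ to construct, near $\Delta$, a two-parameter family of free boundary surfaces all of area strictly less than $|\Delta|$, and then splice this into $\{\Sigma_t\}$ to obtain a competitor sweepout $\{\widetilde\Sigma_t\}\in\Lambda_0$ with $\mathcal F(\{\widetilde\Sigma_t\})<|\Delta|=W(\Lambda_0,g)$, a contradiction. More precisely, I would use the Lyapunov–Schmidt type argument from Lemma~\ref{lem:nicefol}: solving $\Pi(H(t\phi_1+s\psi+v))=0$ and the free boundary condition, where $\psi$ is a second, $Q$-negative direction orthogonal to $\phi_1$, yields a family $X_{u_{t,s}}(\Delta)$ over a neighborhood of $0$ in $\R^2$ whose area has a strict local maximum at $(0,0)$ equal to $|\Delta|$, with the maximum attained \emph{only} at $(0,0)$ (because the Hessian of the area in $(t,s)$ is negative definite, being governed by $Q$ restricted to $\Span(\phi_1,\psi)$). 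Now replace the segment $t\in(-\eps,\eps)$ of $\{\Sigma_t\}$ by a path in this $2$-parameter family that goes from $\Sigma_{-\eps}$ to $\Sigma_{+\eps}$ while \emph{avoiding} $(0,0)$; along such a path every surface has area $<|\Delta|$, and outside $(-\eps,\eps)$ the areas are already $<|\Delta|$ by Proposition~\ref{prop:sweepout}. This produces a sweepout in $\Lambda_0$ with maximal area strictly below $|\Delta|=W(\Lambda_0,g)$, contradicting the definition of the width. Hence $\Ind(\Delta)=1$.

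The main obstacle is making the two-parameter deformation precise: one must check that the family $\{X_{u_{t,s}}(\Delta)\}$ consists of free boundary surfaces (this is the role of the $\boN$-component of the map $F$, exactly as in Lemma~\ref{lem:nicefol}), that the area functional on this family is $C^2$ with Hessian at the origin equal to $\mathrm{diag}(-\lambda_1,-\lambda_2)$ (up to positive factors) hence negative definite, and — most delicately — that the resulting spliced family is a legitimate element of $\Lambda_0$ (a generalized smooth family of surfaces satisfying $(a)$–$(f)$, with a uniformly bounded number of singular points, obtained from $\{\Gamma_t\}$ by the allowed operations). The gluing at $t=\pm\eps$ must be done smoothly, reparametrizing as in the proof of Proposition~\ref{prop:sweepout}. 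Once these points are handled, the contradiction with $W(\Lambda_0,g)=|\Delta|$ is immediate.
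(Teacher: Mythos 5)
Your proposal is correct and follows essentially the same route as the paper: assume index at least two, use the second negative eigenfunction $\phi_2$ to build a two-parameter family $\Sigma_{t,s}=X_{u_t+s\phi_2}(\Delta)$ whose area has a negative-definite Hessian at the origin, choose a path $t\mapsto(t,\tilde s(t))$ avoiding $(0,0)$ with $\tilde s(\pm\eps)=0$, and splice it into the sweepout of Proposition~\ref{prop:sweepout} to contradict $W(\Lambda_0,g)=|\Delta|$. The only (harmless) difference is that the paper simply adds $s\phi_2$ to the existing graph functions rather than re-running the Lyapunov--Schmidt reduction in two parameters; the intermediate surfaces need not be free boundary, since a sweepout only requires transverse intersection with $\partial B$.
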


\begin{proof}
We already know that $\Delta$ has index at least one. Let us assume that it has 
index at least two. Let $\lambda_1$ and $\lambda_2$ be the first two negative 
eigenvalues of the Jacobi operator and $\phi_1$ and~$\phi_2$ be the associated 
eigenfunctions. Consider $\{\Sigma_t\}_{t\in[-1,1]}$ the sweepout given 
by the preceding proposition. Using the notations of Lemma~\ref{lem:nicefol}, 
for a function $u$ defined on $\Delta$, we consider $X_u(\Delta)$ its 
graph in a neighborhood of $\Delta$. We know that the nice foliation, 
and thus the above sweepout, is given by 
\[
\Sigma_t=X_{u_t}(\Delta),\text{ for }t\in[-\eps,\eps],
\]
where $\partial_t{u_t}_{|t=0}=\phi_1$. Let us define 
\[
\Sigma_{t,s}=X_{u_t+s\phi_2}(\Delta), \text{ for }(t,s)\in[-\eps,\eps]^2.
\]
Since $\phi_1$ and $\phi_2$ are associated to negative eigenvalues, the Hessian 
of $F:(t,s)\mapsto\boH^2(\Sigma_{t,s})$ at $(0,0)$ is negative definite (notice 
that $(0,0)$ is a critical point of $F$ and see~\eqref{eq:stab}). Since 
$F(t,0)<F(0,0)$ for $t\neq 0$, there is a function $\tilde 
s:[-\eps,\eps]\to[-\eps,\eps]$ such that $\tilde s(\pm\eps)=0$ and $F(t,\tilde 
s(t))<F(0,0)=|\Delta|$ for any $t\in[-\eps,\eps]$. As a consequence, the 
sweepout $\{\widetilde\Sigma_t\}_{t\in[-1,1]}$ defined by
\[
\widetilde\Sigma_t=
\begin{cases}
\Sigma_t&\text{if }t\notin[-\eps,\eps]\\
\Sigma_{t,\tilde s(t)}&\text{if }t\in[-\eps,\eps]
\end{cases}
\]
satisfies to $\boF(\{\widetilde\Sigma_t\})<|\Delta|=W(\Lambda_0,g)$, which is 
impossible since $\{\widetilde\Sigma_t\}\in \Lambda_0$. So $\Delta$ has index 
one.
\end{proof}
Let us remark that a general index estimate can be found in \cite{Frz}.
The first two items of Theorem~\ref{th:main} are then proven. The third one is 
then a consequence of Theorem~\ref{th:Men} by the second author 
\cite{Men}.

When $L(\partial\Delta)=2\pi$, the equality case of Theorem~\ref{th:Men}
gives that $\Delta$ is totally geodesic and isometric to the Euclidean unit 
disk. So Theorem~\ref{th:rigidhalfball} gives that both sides of $\Delta$ are 
isometric to the unit half-ball, and then $(B,g)$ is isometric to the unit 
ball, in Euclidean space. This finishes the proof of~Theorem~\ref{th:main}.

\begin{rmk}In case the sectional curvature of $g$ is non-negative, we 
can use Proposition~\ref{prop:area} and are able to estimate the area of 
$\Delta$ by $|\Delta|\le \frac12L(\partial\Delta)\le \pi$. In case 
$|\Delta|=\pi$, the rigidity is 
then a direct consequence of the above arguments.\end{rmk}

\appendix

\section{Construction of a free boundary mean-convex foliation}

\label{sec:foliat}

In Section~\ref{subsec:constrained}, we define the notion of a free 
boundary mean-convex foliation near $\partial_0M$. In this section, we 
construct such a foliation under some simple hypotheses. If $\nu_0$ and $\nu_+$ 
are the inward unit conormal for $\partial_0M$ and $\partial_+M$ along 
$\boC(M)$, the \emph{inner angle} between $\partial_0 M$ and $\partial_+M$ is 
the one between $\nu_0$ and $\nu_+$.

\begin{prop}\label{prop:app}
Let $(M,g)$ be a Riemannian $n$-manifold with piecewise smooth boundary whose 
boundary is the 
union $\partial M=\partial_0M\cup\partial_+M$ of two smooth parts. If 
$\partial_0 M$ has positive mean curvature and the inner angle between 
$\partial_0 M$ and $\partial_+M$ is at most $\pi/2$, then $M$ has a free 
boundary mean-convex foliation near $\partial_0M$.
\end{prop}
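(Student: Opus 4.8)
The plan is to build the foliation by flowing $\partial_0M$ inward along a suitably chosen vector field, so that each leaf remains a free boundary hypersurface and the mean-convexity of $\partial_0M$ propagates to nearby leaves. Concretely, I would first produce a vector field $Y$ on a neighborhood of $\partial_0M$ that coincides with the inward unit normal $\nu_0$ on $\partial_0M$, is tangent to $\partial_+M$ near $\boC(M)$, and whose flow carries $\partial_0M$ to a one-parameter family $S_t$ of hypersurfaces meeting $\partial_+M$ orthogonally. The orthogonality (free boundary) condition is the reason the hypothesis ``inner angle at most $\pi/2$'' is needed: one checks that along $\boC(M)$ the angle between a leaf and $\partial_+M$ evolves monotonically and, starting from an angle $\le\pi/2$ and a boundary $\partial_+M$ that is — at least after the construction — non-convex in the relevant direction, it can be steered to exactly $\pi/2$. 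A clean way to organize this is to choose $Y$ so that near $\boC(M)$ it is a convex combination of $\nu_0$ and a vector field tangent to $\partial_+M$, with coefficients tuned so that the leaves hit $\partial_+M$ orthogonally; then define $f$ implicitly by $f(\Phi^Y_t(p))=t$ for $p\in\partial_0M$, where $\Phi^Y_t$ is the time-$t$ flow.

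Next I would verify the mean curvature condition. Writing $H(t)$ for the mean curvature of $S_t$ in the direction of the flow, one has $H(0)=H_{\partial_0M}>0$ by hypothesis, and $H(t)$ is continuous in $t$ (indeed smooth away from $\boC(M)$), so $H(t)>0$ for all small $t\ge 0$ on the part of $S_t$ away from the corner. Near the corner an extra argument is needed: one must ensure $H(t)$ does not degenerate there. This can be handled by a Riccati-type computation for the shape operator of $S_t$ along the flow lines, using that the flow is essentially normal and that the initial data on $\partial_0M$ is uniformly positive; alternatively, by a compactness/continuity argument after noting that the corner locus $\boC(M)$ has a uniform collar in which the metric and the leaves are controlled. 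In either case the conclusion is that for $t$ in a small interval $[0,t_0)$, $S_t=f^{-1}(t)$ has positive mean curvature in the $\nabla f$ direction, $f$ is a smooth submersion off $\boC(M)$, $f^{-1}(0)=\partial_0M$, and $\partial_\nu f=0$ along $\partial_+M$ since the leaves meet $\partial_+M$ orthogonally.

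The main obstacle I expect is the behavior at the corner $\boC(M)$: making the flow simultaneously (i) preserve the free boundary condition on $\partial_+M$ exactly, (ii) keep $f$ a genuine submersion with $\partial_0M$ as the zero level, and (iii) keep the mean curvature of the leaves positive uniformly up to $\boC(M)$. The angle hypothesis $\le\pi/2$ is precisely what is consumed in (i), and one should expect that the construction of $Y$ near the corner — interpolating between a vector tangent to $\partial_+M$ at $\boC(M)$ and $\nu_0$ on $\partial_0M$ — is the delicate point, since an arbitrary interpolation will destroy either the submersion property or the sign of $H$. I would phrase the corner construction in Fermi-type coordinates adapted to $\boC(M)$, choose the interpolating coefficient to depend only on the distance to $\boC(M)$ and on $t$, and then check (ii) and (iii) by a direct computation of $\nabla f$ and of the second fundamental form of the level sets in those coordinates, shrinking the neighborhood of $\partial_0M$ as needed. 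Once the corner is under control the rest is a routine application of the implicit function theorem and continuity of mean curvature.
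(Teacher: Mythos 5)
Your proposal correctly locates the difficulty at the corner $\boC(M)$, but it does not resolve it, and the remedies you offer would not work. The crux is quantitative, not a matter of continuity: for a leaf $S_t$ to meet $\partial_+M$ orthogonally while its interior part stays close to a level set of the collar coordinate, the leaf must bend in a region of size comparable to $t$ near the corner, and the mean curvature contributed by this bending is of order $1/t$ (in the paper's model computation the divergence of the unit normal of the leaf equals $\frac{\psi'}{z}W^{-3}\mu(\psi)$, which blows up as the corner is approached). Hence the sign of the mean curvature near the corner is decided by this bending term alone; neither the positivity of $H_{\partial_0M}$, nor continuity of $H(t)$ in $t$, nor shrinking the neighborhood can help if the bending term has the wrong sign. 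A compactness argument fails because the delicate region collapses onto $\boC(M)$ as $t\to0$, and a Riccati computation along the flow lines does not control the leaves' mean curvature because near $\partial_+M$ the flow must be tangent to $\partial_+M$, hence far from normal to the leaves.

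What is actually needed, and what the paper supplies, is: (i) a specific choice of the bending, namely interpolating between the collar direction and $\nabla\delta$, the gradient of the distance to $\partial_+M$, so that near $\partial_+M$ each leaf is ruled by geodesics normal to $\partial_+M$ --- orthogonality is a condition on the tangent plane of the leaf, i.e.\ on the derivative of the flow map, so a ``tuned convex combination'' of vectors at a point does not by itself guarantee it; (ii) an explicit computation in the blown-up constant-coefficient model showing that for inner angle $\bar\alpha\in(0,\pi/2)$ the $1/t$ bending term is \emph{positive} --- this is where the angle hypothesis is genuinely consumed, and for $\bar\alpha>\pi/2$ the same computation would give a negative term blowing up like $1/t$, which is why the hypothesis is sharp; and (iii) a separate first-order expansion in the blow-up parameter for the borderline cases $t\to0$ and $\bar\alpha=\pi/2$, where the leading term vanishes; the latter case additionally requires normalizing the collar coordinate along $\boC(M)$ in advance (inequality \eqref{eq:coord_improve}) so that the first-order correction is dominated by $H_{\partial_0M}>0$. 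None of (ii)--(iii) is routine, and your sketch gives no argument for them.
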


The hypotheses in the above result should be compared with \cite[Definition 
2.2]{GuWaZh} which is used to do a constrained min-max construction 
in the Almgren-Pitts setting.

The rest of the appendix is devoted to the proof of Proposition~\ref{prop:app}. 
It is made of several steps. In 
order to do the construction, we start with a parametrization of a 
neighborhood of 
$\partial_0M$ in $M$ by $\partial_0M\times[0,1)$ with a second coordinate $y$ 
such that $\partial_y$ is unitary and orthogonal to $\boC(M)$ along 
$\boC(M)$. 

\subsection{Improving the coordinate system}
In order to deal with points where the inner angle is $\pi/2$, we need to 
choose a
better coordinate $y$. So let $u$ be a positive function on 
$\partial_0M$ such that $u=1$ on $\boC(M)$. Notice that we can choose the value 
of $\partial_\nu u$ as we want. Let us define the map
$F:\partial_0M\times [0,\eps)\to \partial_0M\times [0,1); (a,z)\mapsto 
(a,u(a)z)$. Since $u$ is positive, $F$ is a diffeomorphism on its image and 
this 
defines a new parametrization of a neighborhood of 
$\partial_0M$ in $M$. 

We can consider a parametrization of a neighborhood of $\boC( M)$ in 
$\partial_0M$ by $\boC(M)\times[0,\eps)$ such that the second coordinate 
function $x$ is the distance function in $\partial_0 M$ to $\boC(M)$. 

With the $y$ coordinate, this gives a parametrization of a neighborhood of 
$\boC(M)$ in $M$ and defines two vector fields: $\partial_y$ and $\partial_x'$. 
With 
the $z$ coordinate, this gives a second parametrization of this neighborhood 
and 
two new vector fields : $\partial_z$ and $\partial_x$. We have the equalities
$\partial_z=u(a)\partial_y$ and $\partial_x=\partial_x'+ 
z\partial_xu(a)\partial_y$ (notice that $\partial_x=\partial_x'$ on 
$\partial_0M$).

Since $u=1$ on $\boC(M)$, $\partial_z=\partial_y$ and then 
$g(\nabla_{\partial_x}\partial_x,\partial_z)= 
g(\nabla_{\partial_x'}\partial_x',\partial_y)$ on $\boC(M)$. We also have 
$g(\partial_x,\partial_z)=(\partial_x'+z\partial_xu(a)\partial_y,\partial_y)$ 
on $\partial_+M$. So, on $\boC(M)$,
\[
\partial_zg(\partial_x,\partial_z)=\partial_yg(\partial_x',\partial_y) 
+\partial_xu(a)=\partial_yg(\partial_x',\partial_y) -\partial_\nu u(a)\,.
\]
So, adjusting the value of $\partial_\nu u$, we can assume, for any $K\in 
\R_+$, that, on $\boC(M)$, 
\begin{equation}\label{eq:coord_improve}
\partial_zg(\partial_x,\partial_z)\ge K 
|g(\nabla_{\partial_x}\partial_x,\partial_z)|\,.
\end{equation}

Let us also notice that since $\partial_0M$ has positive mean curvature, 
$\{z=t\}$ has positive mean curvature (in the $\partial_z$ direction) for small 
$t$. We are now using the parametrization of a neighborhood of $\boC(M)$ in $M$ 
by the coordinates $(q,x,z)\in\boC(M)\times[0,\eps)\times[0,\eps)$ (see 
Figure~\ref{fig:fig1}). 
We then have a positive function $H :\boC(M)\times[0,\eps)^2\to \R$ such 
that $H(q,x,t)$ is the mean curvature of $\{z=t\}$ at the point parametrized by 
$(q,x,t)$ with respect to the $\partial_z$ direction. The parametrization of 
this neighborhood gives a decomposition at $(q,x,z)$ of any vector field $Z$ as 
$Z=Z^x\partial_x+Z^z\partial_z+Z^\top$ with $Z^\top$ is tangent to 
$\boC(M)\times\{(x,z)\}$.

\subsection{Construction of the foliation}
By hypotheses, 
along $\boC(M)$ the vector fields $\partial_x$ and $\partial_z$ are 
unitary, normal to $\boC(M)$ and 
$(\partial_x,\partial_z)=\cos\alpha$ where $\alpha=\alpha(q)\in(0,\pi/2]$ is 
the inner angle at $q\in \boC(M)$.

For $\eta>0$, let us define $N_\eta=\{x^2+z^2\le \eta^2\}$ and 
$E=\{z\ge x\}$ 
in $\boC(M)\times[0,\eps)\times[0,\eps)$. Let us also denote by $\delta$ the 
distance function to $\partial_+M$, \textit{i.e.} $\{x=0\}$ in the 
parametrization. If 
$\eta$ is small enough, $\delta$ is smooth in $E\cap N_\eta$ 
(see~\ref{sec:mettodist} below).

If $q\in \boC(M)$, by hypotheses, the Riemannian metric at $q$ can be 
written 
as $ds^2+dx^2+2\cos\alpha dxdz+dz^2$ where $ds^2$ is some scalar product on 
$T_q\boC(M)$ and $\alpha\in(0,\pi/2]$. As a consequence, as $(p,x,z)\in E$ 
converges to $(q,0,0)$, 
$\nabla\delta$ converges to $X_\alpha=\frac1{\sin\alpha}(\partial_x-\cos\alpha 
\partial_z)$ (see \ref{sec:mettodist} below).

Let $\psi$ be a smooth function defined on $\R_+$ such that $\psi(0)=0$, 
$\psi=1$ on $[1,+\infty)$ and $\psi'>0$ on $[0,1)$ (the expression of $\psi$ 
will be precised later). We then consider the vector field $X$ on $N_\eta$ 
defined by
\[
X=\psi(\frac xz)\partial_x+(1-\psi(\frac xz))\nabla\delta
\]
Notice that $X=\partial_x$ outside $E$ so $X$ is a smooth vector field in 
$N_\eta\setminus \boC(M)$. On $E\cap N_\eta$, $X=(\psi(\frac 
xz)+(1-\psi(\frac xz))(\nabla\delta)^x)\partial_x +(1-\psi(\frac 
xz))(\nabla\delta)^z\partial_z+(1-\psi(\frac 
xz))(\nabla\delta)^\top$. Since $\nabla\delta$ is close to 
$X_\alpha$ for some $\alpha\in(0,\pi/2]$, there is $M>0$ and, for any 
$\mu>0$, 
there is an $\eta>0$ such that in $N_\eta$
\begin{gather}\label{eq:app1}
1-\mu \le X^x \le M\\
-M\le X^z\le \eps\label{eq:app2}\\
|X^\top|\le \mu\label{eq:app3}
\end{gather}
As a consequence for $\eta$ small the coordinate $X^x$ never vanishes and we 
can define the vector field
\[
Y=-\frac X{X^x}
\]
whose $Y^x$ coordinate is $-1$.

Moreover, under the above estimates, there is $\eta_1$ such that any integral 
curve of $Y$ starting from a point in 
$E\cap N_{\eta_1}$ stays in $E\cap N_\eta$ for positive time up to a time when 
it hits $\partial_+M=\{x=0\}$. Notice that, when $x=0$, $Y$ is a positive 
multiple of $-\nabla\delta$. So 
such an integral curve hits $\partial_+M$ orthogonally. Besides there is 
$\eta_2<\eta_1$ such that, in negative time, any integral curve of $Y$ starting 
from $E\cap N_{\eta_2}$ hits $\{x=z\}$ in $N_{\eta_1}$.

Let us denote by $(\phi_t)$ the flow associated to the vector field $Y$ (when 
it 
is defined). For example, if $p\in E\cap N_{\eta_1}$ has coordinates 
$(q,x,z)$, $\phi_t(p)$ is defined on the interval of positive time 
$[0,x]$ since the derivative of the $x$ coordinates along $Y$ is $Y^x=-1$ and 
$\phi_t(p)$ hits $\{x=0\}$ at time $x$. For $s\in 
(0,\eta_1/\sqrt 2)$ we define 
\[
S_s=(\{z=s\}\setminus E)\bigcup \left(\cup_{t\in [0,s]}\phi_t(\{x=s=z\})\right)
\]
Notice that the two pieces of $S_s$ are glued along $\{x=s=z\}$ and this gluing 
is smooth since $Y$ extend smoothly to $-\partial_x$ outside $E$. So $S_s$ is a 
smooth hypersurface which is normal to $\partial_+M$ since it contains integral 
curve of the vectorfield $Y$. By construction the function $f$ defined near 
$\partial_0M$ by 
\[
p\mapsto\begin{cases}
0&\text{if }p\in\partial_0M\\
s&\text{if }p\in S_s
\end{cases}
\]
is well defined, continuous in a neighborhood of $\partial_0M$, and is a 
smooth submersion outside of $\boC(M)$. Moreover $\partial_\nu f=0$ along 
$\partial_+M$, since $S_s$ contains integral curves of $Y$ that hits 
$\partial_+M$ orthogonally.
The only aspect that we have to check is that $S_s$ has positive mean curvature 
for $s$ small. 

\subsection{Study of the mean curvature} If $S_s$ does not have positive 
mean curvature for small $s$, there are two sequences $s_n\to 0$ and 
$p_n\in S_{s_n}$ such that the mean curvature of $S_{s_n}$ at $p_n$ is 
non-positive. By construction, the mean curvature of $\{z=s_n\}$ is positive 
for $n$ large, so $p_n\in E$ and we may assume that $p_n\to \bar p\in \boC(M)$. 
In order to analyze the mean curvature of $S_{s_n}$ at $p_n$ we fix a chart of 
$\boC(M)$ near $\bar p$: so we identify $\boC(M)$ with a small ball in 
$\R^{n-2}$ such that $\bar p$ is the origin and the Euclidean basis 
$(\partial_i)_{1\le i\le n-2}$ is orthonormal for the induced metric at the 
origin.

Let us denote by $B_r$ the open ball of radius $r$ in $\R^{n-2}$ 
centered at the origin. Let $A$ be larger than $\cot\alpha$ for any 
inner angle $\alpha$ on $\boC(M)$.
We consider $V=\{(q,x,z)\in B_2\times 
\R_+^2\mid \frac12 x< z< 1-A(x-2)\text{ and }z> \frac12\}$ and 
$U=\{(q,x,z)\in 
B_4\times \R_+^2\mid \frac14 x< z< 15+10A-x)\}$ (see Figure~\ref{fig:fig2}). 

\begin{figure}
\centering
\resizebox{0.6\linewidth}{!}{\input{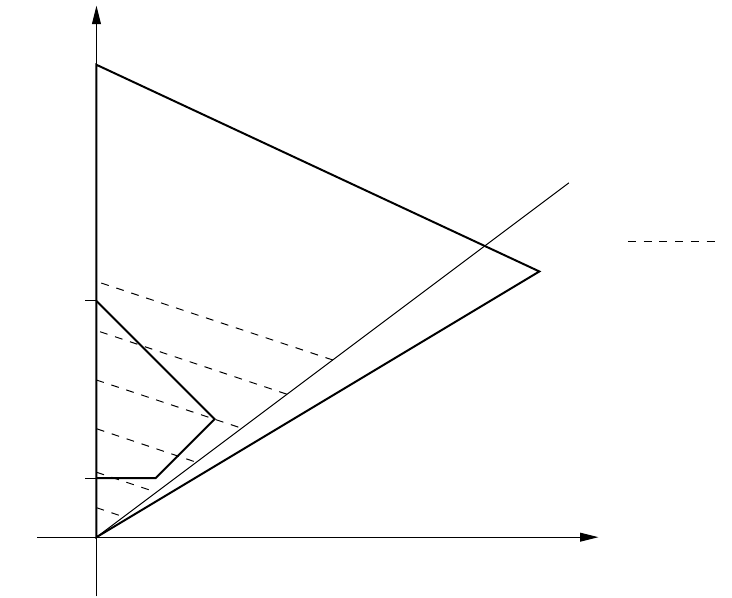_t}}
\caption{The geodesics of $g_\alpha$}\label{fig:fig2}
\end{figure}
We then consider 
$\boG^k$ the set of 
$C^k$-Riemannian 
metric on $U$. Let $\boZ^k$ be the set of $C^k$-vector fields $Z$ in $V$ 
of the form 
$Z=Z^\top+Z^x\partial_x+Z^z\partial_z$ with $Z^\top\in \R^{n-2}\times\{(0,0)\}$ 
such that $Z^x> \frac12$ and $-\frac A 2< Z^z< \frac14$ and $|Z^\top|< 
\frac12$.
For $Z\in \boZ^k$, we define on $V$ the vector field 
\[
\boY(Z)=-\frac{\psi(\frac xz)\partial_x+(1-\psi(\frac xz))Z}{\psi(\frac 
xz)+(1-\psi(\frac xz))Z^x}\,.
\]
By the assumption on $Z^x$, the above vectorfield $\boY(Z)$ is well defined on 
$V$ and has coordinate $-1$ in front of $\partial_x$. As a consequence of the 
bound on $Z$, a solution of $\dot p=\boY(Z)(p)$ starting from $(q,1,1)\in 
B_1\times \R_+^2$ exists and lives in $V$ up to time $t=1$ where it exits $V$ 
by $V_0=V\cap\{x=0\}$. We want to be more precise about the flow associated to 
$\boY(Z)$.

\subsection{The vector field to flow operator} We define 
$\boX^k=\{X\in C^k(V,\R^n)\mid X^x=-1,-\frac12< X^z< A,|X^\top|<1\}$, 
$\boT=\{F\in C^0([0,1],\R^n)\mid F(0)\in B_1\times\{(1,1)\}\text{ and } 
\forall 
t\in[0,1], 
F^x(t)=1-t, 
F(t)\in V\}$ and $T\boT=\{G\in C^0([0,1],\R^n)\mid G^z(0)=0 \text{ and 
}\forall 
t\in[0,1], 
G^x(t)=0\}$. We also define the map
\[
B:\begin{array}{ccc}
B_1\times \boX^k\times \boT&\to& T\boT\\
(q,X,F)&\mapsto & \left(t\mapsto F(t)-(q,1,1)-\int_0^t 
X(F(s))ds\right)
\end{array}
\]
Notice that because of the assumption on $X\in\boX^k$, $B(q,X,F)$ belongs to 
$T\boT$ for $(q,X,F)\in B_1\times \boX^k\times \boT$.
Moreover, for $(q,X)\in B_1\times \boX^k$, the solution of the ode $\dot 
p=X(p)$ starting at
$(q,1,1)$ is defined up to time $1$ and given by $F$ satisfying $B(q,X,F)=0$. 
Reciprocally if $B(\bar q, \barre X,\barre F)=0$, 
$\barre F$ is the solution of $\dot 
p=\barre X(p)$ starting at
$(\bar q,1,1)$. As a consequence $\barre F$ is $C^{k+1}$. The map $B$ is $C^k$ 
and its differential with 
respect to $F$ at $(\bar q, \barre X,\barre F)$ is given by
\[
D_FB(\bar q, \barre X,\barre F):\begin{array}{ccc}
T\boT&\to& T\boT\\
G&\mapsto& \left(t\mapsto G(t)-\int_0^t 
D \barre X(\barre F(s))(G(s))ds\right)
\end{array}
\]
The invertibility of this differential is given by the study of 
the linear ode 
$\dot p=D \barre X(\barre F(t))(p)$. Since $t\mapsto D \barre X(\barre 
F(t))$ is continuous and has vanishing $x$ coordinate, $D_FB( \bar q, \barre 
X,\barre F)$ is invertible. As a 
consequence for any $(\bar q,\barre X,\barre F)$ such that $B(\bar q,\barre 
X,\barre F)=0$ there is a $C^k$ map $\Phi$ defined near $(\bar q,\bar X)$ such 
that $B(q,X,\Phi(q,X))=0$. As a consequence $\Phi(q,X)$ is $C^{k+1}$ in $t$. 
Moreover, since $\partial_t \Phi(q,X)(t)=X(\Phi(q,X)(t))$, its time derivatives 
depends in a $C^k$ way of $(q,X)$. So the map : 
$(q,X,t)\mapsto\Phi(q,X)(t)$ is $C^k$.

\subsection{The mean curvature map} So for $Z\in \boZ^k$, $\boY(Z)\in \boX^k$ 
and the map $\Psi_Z 
:(p,t)\in B_1\times[0,1]\mapsto 
\Phi(q,\boY(Z))(t)\in V$ is $C^k$ and solves
\begin{equation}\label{eq:sys}
\begin{cases}
\partial_t\Psi_Z(q,t)=\boY(Z)(\Psi_Z(q,t))\\
\Psi_Z(q,0)=(q,1,1)
\end{cases}
\end{equation}
The map $\Psi_Z$ is a smooth embedding and we can then define the mean 
curvature 
map $\boH : \boG^k\times \boZ^k\times [0,1]\to \R$ such that 
$\boH(h,Z,t)$ is the mean curvature of $\Psi_Z(B_1\times[0,1])$ at 
$\Psi_Z(0,t)$ 
with respect to the metric $h$ and the unit normal that points in the upward 
direction near $\Psi_Z(0,0)$ (see Figure~\ref{fig:fig1}). Let us notice that, 
by construction, the map 
$\boH$ is $C^2$ when $k\ge 4$.

For $r\in \R^{n-2}$ and $s>0$, we can define the map $J_{r,s}:U\to 
\R^{n-2}\times \R_+^2; (q,x,z)\mapsto (r+sq,sx,sz)$. We notice that when $r$ 
and $s$ are small $J_{r,s}$ maps $U$ into the neighborhood of $\bar p=0$. We 
can 
then define $g_{r,s}=s^{-2}J_{r,s}^*g$. The maps $(r,s)\mapsto g_{r,s}$ extends 
smoothly when $s=0$ by 
a constant metric $g_{r,0}=g(r,0,0)$. So for all $s$ and $r$ small, $g_{r,s}$ 
is close to $g_{0,0}= g_{\bar \alpha}=\delta^2+dx^2+2\cos\bar\alpha dxdz+dz^2$ 
where 
$\delta^2$ 
is the Euclidean metric on $\R^{n-2}$ and $\bar \alpha=\alpha(\bar p)$.

\subsection{The metric to gradient of the distance 
operator}\label{sec:mettodist}
In this subsection, we study the operator that associates to $h\in 
\boG^{2k+1}$, the vector field $\nabla^h\delta_h$ where $\delta_h$ is the 
distance to $\{x=0\}$ with respect to $h$.

 In order to 
study this operator, we first solve the geodesic flow for a metric $h\in 
\boG^{2k+1}$ 
close to $g_{\bar \alpha}$ starting orthogonally from a point in 
$B_3\times\{0\}\times(\frac14,2+2A)$. For 
$h\in \boG^{2k+1}$, the covariant derivative can be written 
$\nabla^h_XY=\nabla^0_XY+\Gamma^h(X,Y)$ where $\nabla^0$ is the usual covariant 
derivative in $\R^n$ and $\Gamma^h$ is some symmetric tensor field with value 
in $\R^n$ (notice that $\Gamma^h$ is $C^{2k}$ if $h\in \boG^{2k+1}$). In order 
to control the existence time of the geodesic, the geodesic won't be 
parametrized by arc length.

Let $\boS^k=\{(F,Y)\in C^k([0,3],\R^n)\times C^k([0,3],\R^n)\mid \forall 
t\in[0,3], 
F(t)\in U,\ F^x(t)=tF^z(t),\ Y^x(t)>\frac12\text{ and }Y^z<\frac1{16} \}$ and 
$T\boS^k= \{(F,Y)\in 
C^k([0,3],\R^n)\times C^k([0,3],\R^n)\mid \forall t\in[0,3],F^x(t)=tF^z(t) \}$. 
For a metric $h$ and $(q,z)\in B^3\times (\frac14,2A+2)$, we denote by 
$N(h,q,z)$ the unit normal vector to $\{x=0\}$ at $(q,0,z)$. We then define the 
map
\[
C:\begin{array}{ccc}
\boG^{2k+1}\times B_3\times(\frac14,2+2A)\times \boS^k&\to& T\boS^k\\
(h,q,z,(F,Y))&\mapsto & \left(t\mapsto M(t)\right)
\end{array}
\]
where
\[
M(t)=\begin{pmatrix}
F(t)-(q,0,z)-\int_0^t\frac{F^z(s)}{Y^x(s)-sY^z(s)}(Y^\top(s)+Y^z(s)\partial_z)ds
 +t\int_0^t\frac{F^z(s)}{Y^x(s)-sY^z(s)}Y^z(s)\partial_xds\\
Y(t)-N(h,q,z)+\int_0^t\frac{F^z(s)}{Y^x(s)-sY^z(s)} 
\Gamma_{F(s)}^h(Y(s),Y(s))ds
\end{pmatrix}
\]
Notice that the assumptions on $(F,Y)\in\boS^k$ give 
$Y^x-sY^z>\frac14F^z>0$. 
The coefficient $\frac{F^z(s)}{Y^x(s)-sY^z(s)}$ and the expression of $M$ 
ensures that $C(h,q,z,(F,Y))$ 
belongs to $T\boS^k$. Moreover $C(h,q,z,(F,Y))=0$ if and only if
\[
\begin{cases}
\frac{d}{dt}F=\frac{F^z(t)}{Y^x(t)-tY^z(t)}Y(t)\\
F(0)=(q,0,z)\\
\frac{D^h}{dt}Y=0\\
Y(0)=N(h,q,z)
\end{cases}
\]
in other words, $F$ runs through the geodesic starting from $(q,0,z)$ in the 
direction $N(h,q,z)$ and $Y(t)$ is the unit tangent vector to this geodesic at 
$F(t)$ and $F$ stops when it crosses $\{x=3z\}$.

For the metric $h=g_\alpha$, let 
$F_\alpha(q,z,t)=(q,\frac{tz}{1+t\cos\alpha},\frac{z}{1+t\cos\alpha})$ and 
recall the notation 
$X_\alpha=\frac1{\sin\alpha}(\partial_x-\cos\alpha\partial_z)$. For 
$(q,z)\in B_3\times(\frac14,2+2A)$, $F_\alpha(q,z,\cdot)\in \boS^k$ and 
$C(g_\alpha,q,z,(F_\alpha(q,z,\cdot),X_\alpha))=0$ (notice that 
$\Gamma^{g_\alpha}=0$). The map $C$ is $C^k$ and the derivative with 
respect to $(F,Y)$ at $(F_\alpha(q,z,\cdot),X_\alpha)$ is:
\[
D_{F,Y}C(g_\alpha,q,z,F_\alpha(q,z,\cdot),X_\alpha):\begin{array}{ccc}
T\boS^k&\to &T\boS^k\\
(G,Z)&\mapsto&\left(t\mapsto\begin{pmatrix}
G(t)-\int_0^t L(s,G(s),Z(s))ds\\
Z(t)\end{pmatrix}\right)
\end{array}
\]
where $L:[0,3]\times \R^n\times \R^n\to\R^n$ is $C^\infty$ in its first 
variable and bilinear in the last two. As 
above, 
this differential is invertible. So there is a neighborhood $\boN$ of 
$g_{\bar \alpha}$ 
in
$\boG^{2k+1}$ and a $C^k$ map $\zeta:\boN\times B_{5/2}\times 
(\frac14,2+2A)\to \boS$ 
such 
that $C(h,q,z,\zeta(h,q,z))=0$. $\zeta$ can be written as 
$\zeta=(\zeta_F,\zeta_Y)$. We have $\zeta_F( 
g_{\bar\alpha},q,z)=F_{\bar \alpha}(q,z,\cdot)$. 
The map $(q,z,t)\mapsto F_{\bar \alpha}(q,z,t)$ is a diffeomorphism from 
$B_{5/2}\times (\frac14,2+2A)\times[0,3]$ onto a subset that contains $V$ 
(actually there 
are no focal time along any geodesic for $ g_{\bar\alpha}$) (see 
Figure~\ref{fig:fig2}). As a 
consequence, $(q,z,t)\mapsto \zeta_F(h,q,z)(t)$ is a diffeomorphism from 
$B_{5/2}\times (\frac14,2+2A)\times[0,3]$ onto a subset that contains $V$ for 
$h$ close to 
$ g_{\bar \alpha}$. So, reducing $\boN$, we have a $C^k$ map $\Xi:\boN\to 
C^k(B_{5/2}\times(\frac14,2+2A)\times [0,3],\R^n)$ defined by $\Xi(h) : 
(q,z,t)\mapsto 
\zeta_F(h,q,z)(t)$ such that $\Xi(h)$ is a diffeomorphism on its image and this 
image contains $V$. As a consequence, for any $h\in \boN$, the converse map 
$\Xi(h)^{-1}$ is well defined on $V$, so we have a $C^k$ map $\widetilde\Xi: 
\boN\to C^k (V,B_{5/2}\times(\frac14,2+2A)\times [0,3])$ such that 
$\widetilde\Xi(h)=\Xi(h)^{-1}$. Then, for $h\in \boN$, we can define the 
vector field $Z_h$ on $V$ by $Z_h(p)=\zeta_Y(h,\widetilde{\Xi}(h)(p))$ (here we 
write $\zeta_Y(h,q,z)(t)=\zeta_Y(h,q,z,t)$). By 
construction, the map $h\in \boN\mapsto Z_h$ is $C^k$ and $Z_h$ is a unit 
vector field 
tangent to geodesic of $h$ starting orthogonally from $\{x=0\}$. As a 
consequence 
$Z_h$ is the gradient of the distance function $\delta_h$ to $\{x=0\}$ for the 
metric $h$.

As a consequence 
for any $r,s$ small, there is a vector field $X_{r,s}$ which is the gradient of 
the distance function to $\{x=0\}$ with respect to $g_{r,s}$. Moreover 
$(r,s)\mapsto X_{r,s}$ is $C^k$ (for any $k$).

Recall that we are interested in the sign of the mean curvature of $S_{s_n}$ at 
$p_n$. We know that $p_n$ belongs to $E$ so $p_n=\phi_{s_nt_n}(\tilde p_n)$ for 
some $\tilde p=(\tilde q_n,s_n,s_n)$ and $t_n\in[0,1]$. Since $p_n\to \bar p$, 
$\tilde p_n\to 
\bar p$ and we can write $\tilde p_n=\bar p+r_n$. Since $J_{r_n,s_n}$ is a 
local homothety of factor $s_n$ from $(U,g_{r_n,s_n})$ into a neighborhood of 
the origin in $(\R^{n-2}\times\R_+^2,g)$, the mean 
curvature of $S_{s_n}$ is then given by 
$\frac1{s_n}\boH(g_{r_n,s_n},X_{r_n,s_n},t_n)$. So we need to understand 
the sign of $\boH(g_{r,s},X_{r,s},t)$ for $r,s$ small and $t\in[0,1]$. This 
quantity is continuous in $(r,s)$ so let us first consider the case 
$(r,s)=(0,0)$.

\subsection{Computation of $\boH( g_{\bar\alpha},X_{\bar \alpha},t)$}
We consider a slightly more general computation. Let $h$ be a constant metric 
of 
the form $h=ds^2+dx^2+2\cos\bar\alpha dxdz+dz^2$ where $ds^2$ is a scalar 
product on $\R^{n-2}$. Let us notice that $\nabla^h\delta_h=X_{\bar \alpha}$. 
Besides a vector field normal (for $h$) to 
$\Psi_{X_{\bar\alpha}}(B_1\times[0,1])$ is given by 
\[
\psi(\frac 
xz)\frac{-\cos\bar\alpha\partial_x+\partial_z}{\sin\bar\alpha}+(1-\psi(\frac
 xz))\partial_z=-\psi(\frac 
xz)\frac{\cos\bar \alpha}{\sin\bar\alpha}\partial_x+(\frac{\psi(\frac 
xz)}{\sin\bar\alpha}+(1-\psi(\frac 
xz))\partial_z
\]
whose norm in the metric $h$ is
\[
W=(\psi^2(\frac xz)+(1-\psi(\frac 
xz))^2+2\sin\bar\alpha \psi(\frac xz)(1-\psi(\frac xz)))^{1/2}
\]
In the following, we will lighten the notations by writing $u=\frac xz$, 
$\psi=\psi(u)$ and $\psi'=\psi'(u)$. So the unit normal vector to 
$\Psi_{X_{\bar\alpha}}(B_1\times[0,1])$ is given by
\[
N=\frac1{W}\Big((-\psi\frac{\cos\bar 
\alpha}{\sin\bar\alpha}\partial_x+(\frac{\psi}{\sin\bar\alpha}+(1-\psi))\partial_z\Big)
\]
We then can compute the mean curvature as the opposite of the 
divergence of $N$. We have
\begin{gather*}
\partial_x W^{-1}=-\frac1z\psi'(2\psi-1)(1-\sin\bar\alpha)W^{-3}\\
\partial_z
W^{-1}=\frac uz\psi'(2\psi-1)(1-\sin\bar\alpha)W^{-3}
\end{gather*}
Thus
\begin{align*}
\Div 
N=&\frac1z 
\psi'(2\psi-1)(1-\sin\bar\alpha)W^{-3} 
(\psi\frac{\cos\bar\alpha}{\sin\bar\alpha} 
+u(\frac{\psi}{\sin\bar\alpha}+(1-\psi)))\\
 &\quad-\frac1z\psi'W^{-1}(\frac{\cos\bar\alpha}{\sin\bar\alpha} 
 +u(\frac1{\sin\bar\alpha}-1))\\
=&\frac{\psi'}z 
\psi'(2\psi-1)(1-\sin\bar\alpha)W^{-3}(\psi(\frac{\cos\bar\alpha}{\sin\bar\alpha}
 +u(\frac1{\sin\bar\alpha}-1))+u)\\
 &\quad-\frac1z\psi'W^{-1}(\frac{\cos\bar\alpha}{\sin\bar\alpha} 
 +u(\frac1{\sin\bar\alpha}-1))\\
=&\frac{\psi'}zW^{-3}\left(u(2\psi-1)(1-\sin\bar 
\alpha)\vphantom{W^2}\right.\\
&\qquad\qquad\qquad\qquad\left.+ (\frac{\cos\bar\alpha}{\sin\bar\alpha} 
 +u(\frac1{\sin\bar\alpha}-1))(\psi(2\psi-1)(1-\sin\bar\alpha)-W^2)\right)\\
=&\frac{\psi'}zW^{-3}\left(u(2\psi-1)(1-\sin\bar\alpha) 
+\left(\frac{\cos\bar\alpha}{\sin\bar\alpha}+
u(\frac1{\sin\bar\alpha}-1)\right)(\psi(1-\sin\bar\alpha)-1)\right)\\
=&\frac{\psi'}zW^{-3}\mu(\psi)
\end{align*}
where $\mu$ is some affine function depending on $u$. So in order to control 
the 
sign of $\mu$ on $[0,1]$, we just have to look at
\[
\mu(0)=-u(1-\sin\bar\alpha)-\left(\frac{\cos\bar\alpha}{\sin\bar\alpha}+
u(\frac1{\sin\bar\alpha}-1)\right)<0\text{ if }\bar\alpha\in(0,\frac\pi2)
\]
and
\[
\mu(1)=u(1-\sin\bar\alpha)-\sin\bar\alpha\left(\frac{\cos\bar\alpha}{\sin\bar\alpha}+
u(\frac1{\sin\bar\alpha}-1)\right)=-\cos\bar\alpha<0\text{ if 
}\bar\alpha\in(0,\frac\pi2)
\]
Thus we have that the mean curvature vanishes for $\bar\alpha=\frac\pi2$ and 
has the opposite sign to $\psi'(\frac xz)$ when $\bar\alpha\in(0,\frac\pi2)$. 
Because of the assumptions on $\psi$, it gives that
\begin{equation}\label{eq:boH}
\boH(h,X_{\bar \alpha},t)>0 \iff \bar\alpha\in(0,\frac\pi2)\text{ and 
}t\neq 0
\end{equation}
As a consequence, for $h= g_{\bar\alpha}$, if $\bar \alpha\in (0,\frac\pi2)$ 
and 
$t_n\to \bar t\in(0,1]$, then the mean curvature of $S_{s_n}$ at 
$p_n$ is positive for large $n$ (and even goes to $+\infty$).

\subsection{The case $\bar t=0$} Let us 
denote $\alpha_n=\alpha(r_n)$, we have
\[
\boH(g_{r_n,s_n},X_{r_n,s_n},t_n)
=\boH(g_{r_n,s_n},X_{r_n,s_n},0)+\int_0^{t_n}\partial_t 
\boH(g_{r_n,s_n},X_{r_n,s_n},s)ds
\]

Since $\boH$ is $C^2$, there is a constant $C$ such that
\[
|\partial_t\boH(g_{r_n,s_n},X_{r_n,s_n},s)-\partial_t 
\boH(g_{r_n,0},X_{r_n,0},s)|\le 
C(|g_{r_n,s_n}-g_{r_n,0}|+|X_{r_n,s_n}-X_{r_n,s_0}|)
\]
By definition of $g_{r,s}$, $|g_{r_n,s_n}-g_{r_n,0}|\le Cs_n$ and 
$|X_{r_n,s_n}-X_{r_n,0}|\le Cs_n$ (see \ref{sec:mettodist}, regularity of the 
map $h\mapsto \nabla^h\delta_h$). This implies that
\begin{align*}
\boH(g_{r_n,s_n},X_{r_n,s_n},t_n)
&\ge\boH(g_{r_n,s_n},X_{r_n,s_n},0)+\int_0^{t_n}\partial_t 
\boH(g_{r_n,0},X_{r_n,0},s)ds-C\int_0^{t_n}s_nds\\
&\ge s_nH(r_n,s_n,s_n)+\boH(g_{r_n,0},X_{r_n,0},t_n)-Ct_ns_n\\
&\ge s_n(H(r_n,s_n,s_n)-Ct_n)\\
&\ge 0\quad\text{for large }n
\end{align*}
where we use $\boH(g_{r_n,0},X_{r_n,0},t_n)\ge 0$ (see \eqref{eq:boH} and 
$\alpha(r_n)\in(0,\frac\pi2]$), 
$H$ positive and $t_n\to 0$.

\subsection{The case $\bar\alpha=\frac\pi2$}

By \eqref{eq:boH}, $\boH(g_{q_n,0},X_{q_n,0},t_n)\ge 0$ so, if 
$\boH(g_{q_n,s_n},X_{q_n,s_n},t_n)\le 0$, we have
\[
0\ge \lim 
\frac{\boH(g_{q_n,s_n},X_{q_n,s_n},t_n)-\boH(g_{q_n,0},X_{q_n,0},t_n)}{s_n}
\]
Since $\boH$ is $C^1$, this limit is
\[
D_h\boH(g_{0,0},\partial_x,\bar t)(\frac{\partial}{\partial 
s}{g_{0,s}}_{|s=0})+D_X\boH(g_{0,0},\partial_x,\bar 
t)(\frac{\partial}{\partial s}{X_{0,s}}_{|s=0})
\]
So we have to compute the two above terms. We have
\begin{align*}
D_h\boH(g_{0,0},\partial_x,\bar t)(\frac{\partial}{\partial 
s}{g_{0,s}}_{|s=0})&=\frac{\partial}{\partial 
s} \boH(g_{0,s},\partial_x,\bar t)_{|s=0}\\
&=\frac{\partial}{\partial 
s} (sH(0,s(1-\bar t),s))_{|s=0}=H(0)
\end{align*}

For the second term, let us study $D_X\boH(g_{0,0},\partial_x,\bar 
t)$. Let $Y$ be a vector field in $V$ such that $Y^z=0$. Thus, for any $s$,
$\partial_x+sY$ and $\boY(\partial_x+sY)$ has no component in the $\partial_z$ 
direction. Thus $\boH(g_{0,0}, \partial_x+sY, \bar t)$ is just the mean 
curvature of $\{z=1\}$ for the metric $g_{0,0}$ so it vanishes. Thus 
$D_X\boH(g_{0,0},\partial_x,\bar t)(Y)=0$. Hence we may focus on the 
$\partial_z$ component of 
$\barre Y=\frac{\partial}{\partial s}{X_{0,s}}_{|s=0}$.

First let us notice that
\[
\frac{\partial}{\partial 
s}{g_{0,s}}_{|s=0}(q,x,z)=D_qg(0)(q)+D_xg(0)x+D_zg(0)z
\]
Let us denote by $k$ the above symmetric tensor. In order to compute $\barre 
Y$, let us remark that if $Z_h$ is the gradient of the distance associated to 
the metric $h$, we have, on $V_0$,
\[
\begin{cases}
h(Z_h,Z_h)=1\\
h(Z_h,\partial_z)=0\\
h(Z_h,\partial_i)=0&\text{for all } 1\le i\le n-2
\end{cases}
\]
and, on $V$, $\nabla^h_{Z_h}Z_h=0$. So differentiating these equalities with 
respect to $s$, for $h=g_{0,s}$, gives
\begin{gather}
k(\partial_x,\partial_x)+2g_{0,0}(\barre Y,\partial_x)=0\text{ on }V_0\\
k(\partial_x,\partial_z)+g_{0,0}(\barre Y,\partial_z)=0\text{ on 
}V_0\label{eq:interest1}\\
k(\partial_x,\partial_i)+g_{0,0}(\barre Y,\partial_i)=0\text{ on }V_0\text{ and 
}1\le 
i\le n-2\\
\Gamma(k)(\partial_x,\partial_x)+\nabla^{g_{0,0}}_{\partial_x}\barre Y=0\text{ 
in }V\label{eq:interest2}
\end{gather}
where $\Gamma(k)(\partial_x,\partial_x)=\sum_m(\partial_x 
k_{xm}-\frac12\partial_mk_{xx})\partial_m$. Let us remark that 
$\Gamma(k)(\partial_x,\partial_x)$ is a constant vector field in $V$. Its 
component along $\partial_z$ is given by 
\[
(\partial_xg(\partial_x,\partial_z)-\frac12\partial_z 
g(\partial_x,\partial_x))_{|p=0}=g(\nabla_{\partial_x}\partial_x,\partial_z)_{|p=0}
\]
So, in order to evaluate the component $\barre Y^z$, we just have to know it 
along $V_0$. Let us first remark that, since $\alpha(0)=\frac\pi2$ is a maximum 
value of $\alpha$, we have $D_q\alpha(0)=0$ and thus $D_qg(0)$ has a vanishing 
$xz$ component. Thus, on $V_0$, 
$k(\partial_x,\partial_z)=z\partial_zg(\partial_x,\partial_z)_{|p=0}$ and, by 
\eqref{eq:interest1}, $\barre Y^z=-zg(\partial_x,\partial_z)_{|p=0}$ along 
$V_0$. Let us denote by $b=\partial_zg(\partial_x,\partial_z)_{|p=0}$ and 
$a=g(\nabla_{\partial_x}\partial_x,\partial_z)_{|p=0}$. Thus 
\eqref{eq:interest2} gives $\barre Y^z=-ax-bz$. So the question is then to 
compute
\[
D_X\boH(g_{0,0},\partial_x,\bar t)((-ax-bz)\partial_z)
\]
we have 
\[
\boY(\partial_x-s(ax+bz)\partial_z)=-\partial_x+s(1-\psi)(ax+bz)\partial_z
\]
A vectorfield normal to the associated hypersurface is given by
\[
s(1-\psi)(ax+bz)\partial_x+\partial_z
\]
whose norm is given by
\[
W=(1+s^2(1-\psi)^2(ax+bz)^2)^{1/2}
\]
So the unit normal is given by 
\[
N=\frac1W(s(1-\psi)(ax+bz)\partial_x+\partial_z)=\partial_z 
+s(1-\psi)(ax+bz)\partial_x+O(s^2)
\]
Thus
\begin{align*}
\Div N=&s\left(-\frac1z\psi'(ax+bz)+(1-\psi)a\right)+O(s^2)\\
=&-s(\psi'(au+b)-(1-\psi) a)+O(s^2)
\end{align*}
So the derivative with respect to $s$ is given by $-(\psi'(au+b)-(1-\psi) a)$. 
Thus 
the sign of $D_X\boH(g_{0,0},\partial_x,\bar t)((-ax-bz)\partial_z)$ is the one 
of $\psi'(au+b)-(1-\psi) a$. This expression is non-negative if
\[
b\ge (\frac{1-\psi}{\psi'}-u)a
\]
Notice that $\psi$ can be chosen such that $\frac{1-\psi}{\psi'}-u$ is bounded 
on $[0,1]$ (take $\psi(u)=1-\exp(-1/(u-1)^2)$ near $1$). Now by the coordinate 
improvement \eqref{eq:coord_improve}, the above inequality can be assumed to be 
true. 

Finally this implies that 
\[
D_h\boH(g_{0,0},\partial_x,\bar t)(\frac{\partial}{\partial 
s}{g_{0,s}}_{|s=0})+D_X\boH(g_{0,0},\partial_x,\bar 
t)(\frac{\partial}{\partial s}{X_{0,s}}_{|s=0})\ge H(0)>0
\]
contradicting $\boH(g_{q_n,s_n},X_{q_n,s_n},t_n)\le 0$. We then have proved 
that the mean curvature of $S_s$ is positive for small $s$ and have finished 
the 
proof of Proposition~\ref{prop:app}.

\begin{figure}
\centering
\resizebox{0.6\linewidth}{!}{\input{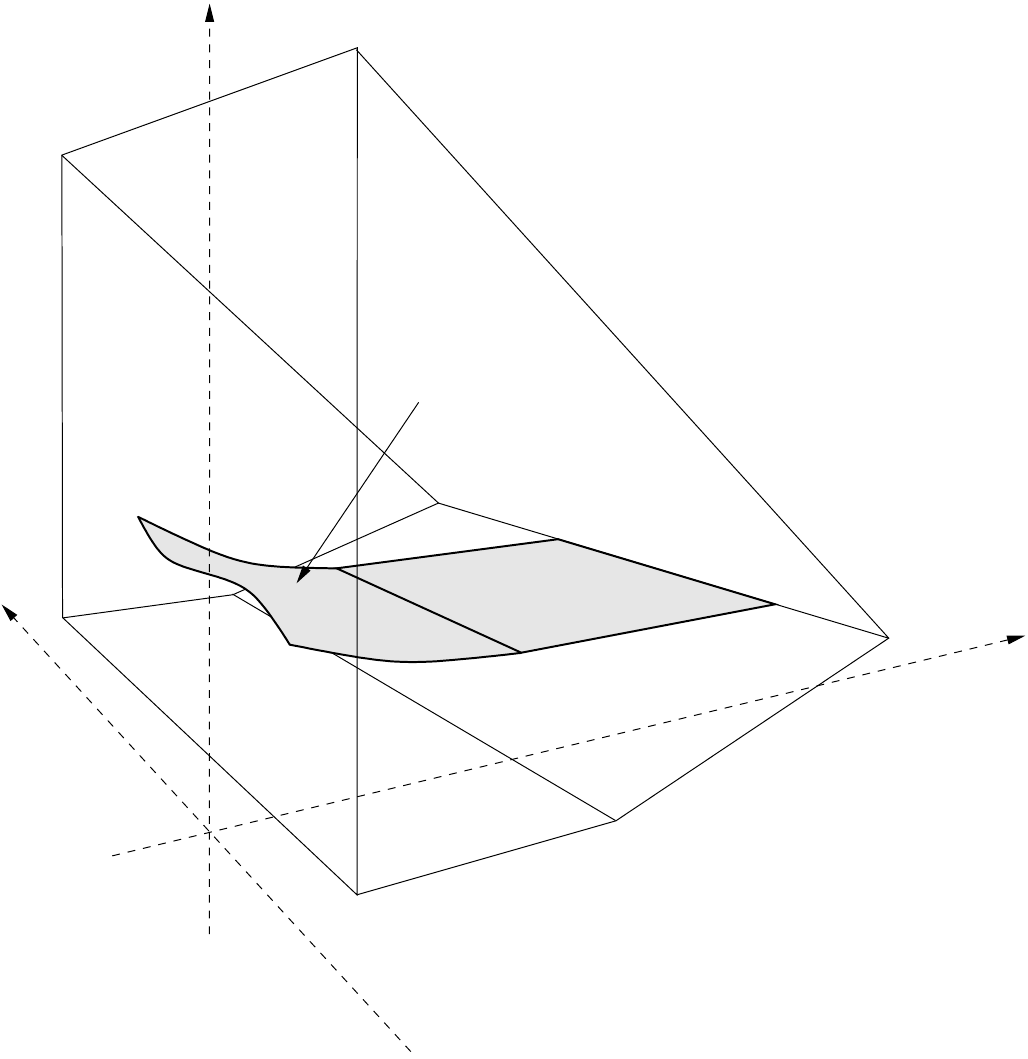_t}}
\caption{The hypersurface $S_s$ in $V$}\label{fig:fig1}
\end{figure}

\bibliographystyle{amsplain}
\bibliography{bibliography.bib}

\end{document}